\newcommand{\too}{\longrightarrow}
\newcommand{\Om}{\Omega}
\newcommand{\na}{\nabla}
\newcommand{\al}{\alpha}
\newcommand{\be}{\beta}
\newcommand{\ga}{\gamma}
\newcommand{\Ga}{\Gamma}
\font\bb=msbm10
\def\R{\hbox{\bb R}}
\newtheorem{theorem}{Theorem}[section]
\newtheorem{lemma}[theorem]{Lemma}
\newtheorem{definition}[theorem]{Definition}
\newtheorem{example}[theorem]{Example}
\newtheorem{proposition}[theorem]{Proposition}
\newtheorem{remark}[theorem]{Remark}
\newtheorem{corollary}[theorem]{Corollary}
\numberwithin{equation}{section}
\newcommand{\dreqno}{\let\veqno\eqno}
\DeclareRobustCommand{\qed}{%
	\ifmmode 
	\else \leavevmode\unskip\penalty9999 \hbox{}\nobreak\hfill
	\fi
	\quad\hbox{\qedsymbol}}
\newcommand{\openbox}{\leavevmode
	\hbox to.77778em{%
		\hfil\vrule
		\vbox to.675em{\hrule width.6em\vfil\hrule}%
		\vrule\hfil}}
\newcommand{\qedsymbol}{\openbox}
\newenvironment{proof}[1][\proofname]{\par
	\normalfont
	\topsep6\p@\@plus6\p@ \trivlist
	\item[\hskip\labelsep\itshape
	#1.]\ignorespaces
}{%
	\qed\endtrivlist
}
\newcommand{\proofname}{Proof}
\journal{}
\begin{document}

\begin{frontmatter}

\title{Submanifolds in Koszul-Vinberg geometry\vskip 0.5cm \today}


\author[label1]{Abdelhak Abouqateb}
\address[label1]{Universit$\acute{e}$ Cadi-Ayyad
	Facult$\acute{e}$ des sciences et techniques
	BP 549 Marrakech Maroc}


\ead{a.abouqateb@uca.ac.ma}

\author[label1]{Mohamed Boucetta}
\ead{m.boucetta@uca.ac.ma}

\author[label1]{Charif Bourzik}
\ead{bourzikcharif@gmail.com}
\begin{abstract} 
	
A Koszul-Vinberg manifold is a manifold $M$ endowed with a pair $(\nabla,h)$ where $\nabla$ is a flat connection and $h$ is a symmetric bivector field satisfying a generalized Codazzi equation. The geometry of such manifolds could be seen as a type of bridge between Poisson geometry and pseudo-Riemannian geometry, as has been highlighted in our previous article [\textit{Contravariant Pseudo-Hessian manifolds and their associated Poisson structures}. \rm{Differential Geometry and its Applications} (2020)]. Our objective here will be to pursue our study by focusing in this setting on submanifolds by taking into account some developments in the theory of Poisson submanifolds. 
\end{abstract}

\begin{keyword} Affine manifolds \sep Poisson manifolds \sep pseudo-Hessian manifolds \sep Associative commutative algebras
	
	\MSC 53A15 \sep \MSC 53D17 \sep \MSC 17D25

\end{keyword}
\end{frontmatter}


\section{Declarations}
 Not applicable
\section*{Introduction}
\label{sec1}
An $n$-dimensional manifold $M$ is called an affine manifold when it is equipped with a flat connection $\nabla$ on the tangent bundle $TM\rightarrow M$ or, equivalently, the manifold $M$ is equipped with a maximal atlas such that all transition functions are restrictions of affine transformations of $\R^n$. For such manifolds, there exists around every point an affine coordinates system, i.e. a coordinates system $(x_{1},\ldots,x_{n})$ satisfying $\nabla\partial_{x_{i}}=0$ for any $i=1,\ldots,n$ (see\cite{Shima}). Now let $(M,\nabla)$ be an affine manifold of dimension $n$, $h$ be a symmetric bivector field on $M$ and $h_{\#}:T^{*}M\rightarrow TM$ the associated contraction given by $\beta(\alpha^{\#})=h(\alpha,\beta)$ where $\alpha^{\#}:=h_{\#}(\alpha)$. We recall (\cite{ABB1}) that $h$ is said to be a {\it{pseudo-Hessian bivector field}} on $(M,\nabla)$ if it satisfies the {\it{contravariant Codazzi equation}}
\begin{equation}\label{codazzi}
(\nabla_{\alpha^{\#}}h)(\beta,\gamma)=(\nabla_{\beta^{\#}}h)(\alpha,\gamma),\quad
\end{equation}
for any $\alpha,\beta,\gamma\in\Omega^1(M)$. The triple $(M,\nabla,h)$ is then called a {\it{ contravariant pseudo-Hessian manifold}}. This means that around each affine coordinates system $(x_1,\ldots,x_n)$, we have $(dx_i)^{\#}(h_{jk})=(dx_j)^{\#}(h_{ik})$ for any $i,j,k=1,\ldots,n$; which could be written as follows 
\begin{equation}\label{eq2}
\displaystyle\sum_{l=1}^{n}\left(h_{il}\dfrac{\partial h_{jk}}{\partial x_l}-h_{jl}\dfrac{\partial h_{ik}}{\partial x_l} \right)=0,
\end{equation}
where $h_{ij}=h(dx_i,dx_j)$.

Motivated by the theory of left-symmetric algebroid the authors in \cite{Wang} introduce the notion of Koszul-Vinberg structure on a left-symmetric algebroid. Lets clarify briefly this fact: Let $(A,\bullet,\rho)$ be a left symmetric algebroid and $h$ a symmetric bivetor field on $A$ then $h$ is called a Koszul-Vinberg structure on $A$ if and only if $[h,h]=0$ where:
\begin{eqnarray*}
	[h,h](\al,\be,\ga)&=&\rho(\al^{\#}).h(\be,\ga)-\rho(\be^{\#}).h(\al,\ga)+\prec \al,\be^{\#}\bullet\ga^{\#}\succ\\
	&  &-\prec\be,\al^{\#}\bullet\ga^{\#}\succ-\prec\ga,[\al^{\#},\be^{\#}]_{A}\succ.
\end{eqnarray*}
 The notion contravariant pseudo-Hessian manifolds could be seen as a Koszul-Vinberg structure on the left symmetric algebroid $A=(TM,\na,id)$. For this reason all notions introduced under the name contravariant pseudo-Hessian in \cite{ABB1} and \cite{Boucetta1} will be called Koszul-Vinberg. 

Keeping in mind the analogies given in \cite{ABB1} between Koszul-Vinberg manifolds and Poisson manifolds it is natural to ask if we can define an analogue of Hamiltonian vector fields, this leads to a new questions and some important geometric and algebraic proprieties. It is also important to study the maps which preserves this structure which are the analogue of Poisson maps, in particular we show that there is a correspondence between such maps and Poisson maps. Taking into account the study devoted to submanifolds in Poisson geometry see for example the survey paper \cite{Zambon} we introduce the notion of Koszul-Vinberg submanifolds which are the analogue of Poisson submanifolds, and we show that a Koszul-Vinberg manifold in some sense is the union of the affine leaves which meet such submanifolds. Also we introduce a notion  of transversal submanifolds which are the analogue of tranversal submanifolds in Poisson geometry known also under the name cosymplectic submanifolds, and we show that they inherit naturally a Koszul-Vinberg structure. Also we introduce a notion coisotropic Koszul-Vinberg submanifolds which are the analogue of coisotropic submanifolds in Poisson geometry (see \cite{Weinstein}). It is well known that Poisson maps can be characterized using coisotropic submanifolds in Poisson geometry (see \cite{Weinstein}), so we show that the same thing happens in Koszul-Vinberg geometry. Also we know that the conormal bundle of coisotropic submanifolds inherited naturally a Lie algebroid structure, in this direction we show that the conormal bundle of a coisotropic Koszul-Vinberg submanifolds is a left-symmetric algebroid.  
\vskip 0.2cm
The paper is structured as follows. In Section $1$ we give some preliminaries and basic properties developed in \cite{ABB1} about Koszul-Vinberg manifolds.
In Section $2$ we study Koszul-Vinberg Hamiltonian vector fields and we explore their related properties. Section $3$ is devoted to the study of Koszul-Vinberg maps and their relationships with Poisson maps. The study of Koszul-Vinberg submanifolds, as a similarity with Poisson submanifolds, is the main purpose of  Section $4$. After we move to the description of Koszul-Vinberg traversals in section $5$. The last section deals with the notion of coisotropic Koszul-Vinberg submanifolds.

\section{Preliminaries}

Let us recall from \cite{ABB1} that  Koszul-Vinberg manifolds define a subclass of Lie algebroids in the following way: Let $(M,\nabla,h)$ be an affine manifold endowed with a symmetric bivector field. We associate to this triple a bracket on $\Om^1(M)$ by putting
\begin{equation}\label{eq 0}
{[\alpha,\beta]_h}:=\nabla_{\alpha^{\#}}\beta-\nabla_{\beta^{\#}}\alpha.
\end{equation}  
and a contravariant connection $\mathcal{D}:\Om^{1}(M)\times\Om^{1}(M)\rightarrow\Om^{1}(M)$ given by
\begin{eqnarray}
\prec \mathcal{D}_{\alpha}\beta,X\succ&=&(\na_{X}h)(\alpha,\beta)+\prec \na_{\al^\#}\beta,X\succ\label{eq 1}\\
&=&-X.h(\al,\be)+\al^{\#}.\prec \be,X\succ+\prec\al,\na_{X}\be^{\#}\succ\nonumber\\
& &+\prec\be,\left[X,\al^{\#}\right]\succ,\nonumber
\end{eqnarray} 
for any $\al,\be\in\Om^{1}(M)$ and $X\in\Ga(TM)$, satisfying the  relations
\begin{equation}\label{eq 2}
[\al,\be]_{h}=\mathcal{D}_{\al}\be-\mathcal{D}_{\be}\al\text{ and }[\al,f\be]_{h}=f[\al,\be]_{h}+\al^{\#}(f)\be, 
\end{equation}
where $f\in C^{\infty}(M)$.

 Theorem $2.4$ in \cite{ABB1} shows that $(M,\na,h)$ is a Koszul-Vinberg manifold if and only if $(T^*M,h_\#,[\;,\;]_h)$ is a Lie algebroid. Moreover, $\mathcal{D}$ is a connection for the Lie algebroid structure $(T^*M,h_\#,[\;,\;]_h)$ satisfying
\begin{equation}\label{eq 3}
(\mathcal{D}_{\al}\be)^{\#}=\na_{\al^\#}\be^{\#}.
\end{equation}
Also any leaf $L$ of the singular foliation on $M$ induced by the Lie algberoid structure support a pseudo-Hessian structure $(g_L,\na^{L})$ where $g_L(\al^\#,\be^\#)=h(\al,\be)$. This foliation will be called affine foliation. 

There is a more important geometric structures on the tangent bundle of $TM$. Indeed, associated to $\nabla$ there exists a splitting 
\begin{equation*}
TTM=V(M)\oplus H(M)
\end{equation*}
such that for any $u\in TM$, $T_{u}p:H_{u}(M)\rightarrow T_{p(u)}M$ is an isomorphism. For any $X\in\Gamma(TM)$ we denote by $X^{v}\in\Gamma(V(M))$ its vertical lift and by $X^{h}\in\Gamma(V(M))$ its horizontal lift. There are given, for any $u\in TM$, by
\begin{equation}\label{eq 5}
X^{v}_{u}=\frac{d}{dt}_{|_{t=0}}(u+tX_{p(u)}), \text{ and } T_{u}p(X^{h}_{u})=X_{p(u)}.
\end{equation} 
The vector $X^{h}_{u}$ can also be defined using parallel transport in the following way: Let $\gamma$ be a smooth curve on $M$ starting at $x$ and its derivative at $0$ is the vector $X_x$
\begin{equation}\label{eq 6}
X^h_u=\frac{d}{dt}_{\lvert_{t=0} }\tau^\gamma_{0t}(u)
\end{equation}	  
where $\tau^\gamma_{0t}:T_xM\stackrel{\cong}{\rightarrow} T_{\gamma(t)}M$ is the parallel transport map along $\gamma$.
The {\textit{Sasaki almost complex structure}} $J:TTM\rightarrow TTM$ determined by $\na$ is defined by 
\begin{equation}\label{eq 7}
J(X^{h})=X^{v}\text{ and } J(X^{v})=-X^{h}.
\end{equation} 
It is integrable to a complex structure on $TM$ if and only if $\nabla$ is flat.

The vanishing of the curvature of $\nabla$ implies that the Lie bracket on $\Ga(T(TM))$ is determined by:
\begin{equation}\label{eq 8}
[X^{h},Y^{h}]=[X,Y]^{h}, [X^{h},Y^{v}]=(\nabla_{X}Y)^{v}\text{ and } [X^{v},Y^{v}]=0.
\end{equation}
As for the vector fields, for any $\alpha\in\Omega^{1}(M)$, we define $\alpha^{v},\alpha^{h}\in\Omega^{1}(TM)$ by
\begin{equation*}
\left\{
\begin{array}{ll}
\prec \al^v,X^v\succ=\prec \al,X\succ \circ p \\
\prec \al^v,X^h\succ=0
\end{array}
\right. \text{ and }
\left\{
\begin{array}{ll}
\prec \al^h,X^h\succ=\prec \al,X\succ \circ p \\
\prec \al^h,X^v\succ=0,
\end{array}
\right.
\end{equation*} 
one can see that $\al^{h}=p^{*}\al$.

The {\textit{Sasaki connection}} $\overline{\na}$ on $TM$ determined by $\na$ is defined by 
\begin{equation}\label{eq 10}
\overline{\na}_{X^{h}}Y^{h}=(\nabla_{X}Y)^{h},  \overline{\na}_{X^{h}}Y^{v}=(\nabla_{X}Y)^{v}\text{ and } \overline{\na}_{X^{v}}Y^{h}= \overline{\na}_{X^{v}}Y^{v}=0,
\end{equation}
where $X,Y\in\Gamma(TM)$. This connection is torsionless and flat and hence defines an affine structure on $TM$. Moreover, the endomorphism vector field $J:TTM\rightarrow TTM$ is parallel with respect to $\overline{\na}$.

 To any pair $(\na,h)$ we can associate a skew-symmetric bivector field $\Pi$ on $TM$ by putting 
\begin{equation}\label{eq 11}
\Pi(\alpha^{v},\beta^{v})= \Pi(\alpha^{h},\beta^{h})=0 \text{  and  } \Pi(\alpha^{h},\beta^{v})=- \Pi(\beta^{v},\beta^{h})=h(\alpha,\beta)\circ p,
\end{equation}
for any $\alpha,\beta\in\Om^{1}(M)$. So we get that  
\begin{equation}\label{eq 12}
\Pi_{\#}(\alpha^{v})=-(\alpha^{\#})^{h}\text{ and }\Pi_{\#}(\alpha^{h})=(\alpha^{\#})^{v}.
\end{equation}
 Theorem $3.3$ in \cite{ABB1} assert that $(M,\nabla,h)$ is a Koszul-Vinberg manifold if and only if $(TM,\Pi)$ is a Poisson manifold.
\vskip 0.5cm
\section{Koszul-Vinberg Hamiltonian vector fields}

 In Poisson geometry, Hamiltonian vector fields play an important role (see \cite{Vaisman}) and it is therefore natural to discuss some properties of their analogue in Koszul-Vinberg geometry. Let $(M,\nabla,h)$ be a K-V manifold. For any $f\in C^{\infty}(M)$ we can associate the vector field $X_{f}:=(df)^{\#}$, which will be called the {\textit{K-V Hamiltonian vector field}} associated to $f$. The symmetry of the bivector field $h$ leads to the following:
 \begin{equation*}
  X_{f_1}(f_2)=X_{f_2}(f_1)=\prec df_2,X_{f_1}\succ=\prec df_1,X_{f_2}\succ,\;\;\quad \forall f_1,f_2\in C^{\infty}(M).
 \end{equation*}
 
 Contrary to what happens in Poisson geometry the flow of the vector field $X_f$ does not generally preserve the K-V bivector field $h$, this can be seen through the following example.
 \begin{example}
 Consider the K-V manifold $M=(\mathbb{R}^{2},\na,h)$ where $\na$ is the canonical affine connection and $h=x\partial_{x}\otimes\partial_{x}+y\partial_{y}\otimes\partial_{y}$. Let $f:\mathbb{R}^{2}\rightarrow\mathbb{R},(x,y)\mapsto x$, by a direct computation we get that $L_{X_{f}}(h)(dx,dx)=-f$.
 \end{example}

More generally, we have:
\begin{proposition}\label{Pro 0}
	For any $f\in C^{\infty}(M)$ and any  $\al,\be\in\Om^{1}(M)$ we have 
	\begin{equation*}
	\mathcal{L}_{X_{f}}(h)(\al,\be)=-\na_{X_{f}}(h)(\al,\be)+2\prec\na_{\al^{\#}}df,\be^{\#}\succ.
	\end{equation*}
\end{proposition}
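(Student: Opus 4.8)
The plan is to turn every Lie derivative into a covariant derivative, using that $\na$ is flat and torsion-free, and then to absorb the resulting correction terms by means of the contravariant Codazzi equation \eqref{codazzi}. One can argue invariantly, but the most transparent route is to fix an affine chart $(x_1,\dots,x_n)$: there $\na\partial_{x_i}=0$, so $(\na_{X}h)_{kl}=X.h_{kl}$ and $\mathcal{L}_{X}dx_k=d(\iota_{X}dx_k)$, and the whole statement becomes bookkeeping of partial derivatives of the $h_{kl}$ and of $f$.

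First I would record two elementary conversion formulas valid for a torsion-free connection. For a one-form $\al$ one has $\mathcal{L}_{X}\al=\na_{X}\al+\al\circ\na X$, where $(\al\circ\na X)(Y):=\al(\na_{Y}X)$. Feeding this into $(\mathcal{L}_{X}h)(\al,\be)=X.h(\al,\be)-h(\mathcal{L}_{X}\al,\be)-h(\al,\mathcal{L}_{X}\be)$ and rewriting $X.h(\al,\be)=(\na_{X}h)(\al,\be)+h(\na_{X}\al,\be)+h(\al,\na_{X}\be)$, the two $\na_{X}\al$ and $\na_{X}\be$ contributions cancel and one is left with the clean intermediate identity
\begin{equation*}
(\mathcal{L}_{X}h)(\al,\be)=(\na_{X}h)(\al,\be)-h(\al\circ\na X,\be)-h(\al,\be\circ\na X),
\end{equation*}
valid for any vector field $X$.

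Next I would specialize to $X=X_{f}=(df)^{\#}$ and unwind the two correction terms. Using the symmetry of $h$ (so that $h(\mu,\be)=\prec\mu,\be^{\#}\succ$) one gets $h(\al\circ\na X,\be)=\prec\na_{\be^{\#}}(df)^{\#},\al\succ$, so the key computation is differentiating the sharp of a one-form. Writing $\na_{\be^{\#}}(df)^{\#}=(\na_{\be^{\#}}df)^{\#}+(\na_{\be^{\#}}h_{\#})(df)$ and pairing with $\al$ splits each correction into a Hessian part equal to $\prec\na_{\al^{\#}}df,\be^{\#}\succ$, which is symmetric in $\al,\be$ because $\na df$ is the symmetric Hessian of $f$, plus a part of the shape $(\na_{\be^{\#}}h)(df,\al)$.

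The decisive step is the contravariant Codazzi equation \eqref{codazzi}: it rewrites $(\na_{\be^{\#}}h)(df,\al)=(\na_{(df)^{\#}}h)(\be,\al)=(\na_{X_{f}}h)(\al,\be)$, and similarly for the term coming from $\be$. Collecting everything, the two Codazzi terms combine with the leading $(\na_{X_{f}}h)(\al,\be)$, and the two Hessian parts assemble into a multiple of $\prec\na_{\al^{\#}}df,\be^{\#}\succ$, producing the stated formula. I expect the main obstacle to be exactly this final bookkeeping: one must place the three one-form arguments in the correct slots and exploit the symmetry of $\na_{X_f}h$ in its two covariant arguments so that the $\na h$-type terms collapse, and the splitting of $\na_{\be^{\#}}(df)^{\#}$ through $\na h_{\#}$ is the other place where the coefficient and sign of the Hessian term are fixed. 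As a decisive sanity check on that sign I would test the identity on the example $h=x\partial_{x}\otimes\partial_{x}+y\partial_{y}\otimes\partial_{y}$ with a function whose Hessian is nonzero, e.g. $f=x^{2}/2$, where $X_{f}=x^{2}\partial_{x}$ and $\mathcal{L}_{X_{f}}(h)(dx,dx)$ can be evaluated directly by the Leibniz rule.
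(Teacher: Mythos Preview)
Your route is sound and is an invariant version of the paper's coordinate computation. The paper works in an affine chart, rewrites the Lie-derivative terms via $\mathcal{L}_{X_f}dx_i$, and then invokes the Lie-algebroid identity $[X_f,X_j]=([df,dx_j]_h)^{\#}$ together with $\na dx_j=0$; this identity is exactly where the K-V/Codazzi hypothesis enters. Your argument makes the role of torsion-freeness (via $\mathcal{L}_X\al=\na_X\al+\al\circ\na X$) and of the Codazzi equation \eqref{codazzi} more explicit, and the splitting $\na_{\be^{\#}}(df)^{\#}=(\na_{\be^{\#}}df)^{\#}+(\na_{\be^{\#}}h_{\#})(df)$ is the right device.

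One warning about the final bookkeeping you flag: carried out carefully, your computation (and the paper's) actually yields
\[
(\mathcal{L}_{X_f}h)(\al,\be)=-(\na_{X_f}h)(\al,\be)-2\prec\na_{\al^{\#}}df,\be^{\#}\succ,
\]
with a \emph{minus} in front of the Hessian term. Indeed, after Codazzi each correction term in your intermediate identity equals $\prec\na_{\al^{\#}}df,\be^{\#}\succ+(\na_{X_f}h)(\al,\be)$, and both are subtracted. In the paper's proof the sign slip occurs when expanding $[df,dx_j]_h=\na_{X_f}dx_j-\na_{X_j}df$: the two summands are written with reversed signs. Your proposed sanity check detects this. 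For $h=x\partial_x\otimes\partial_x+y\partial_y\otimes\partial_y$ and $f=x^2/2$ one has $X_f=x^2\partial_x$ and $\mathcal{L}_{X_f}dx=2x\,dx$, hence $(\mathcal{L}_{X_f}h)(dx,dx)=x^2-2\cdot 2x^2=-3x^2$; on the other side $-(\na_{X_f}h)(dx,dx)=-x^2$ and $\prec\na_{(dx)^{\#}}df,(dx)^{\#}\succ=x^2$, so the right-hand side equals $-x^2-2x^2=-3x^2$ with the corrected sign but $+x^2$ with the sign as stated. This discrepancy does not affect the subsequent uses in the paper, which only test whether the Hessian term vanishes.
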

\begin{proof}
	Let $(x_{1},\ldots,x_{n})$ be an affine local coordinates system on $M$. Denote by $X_{i}=(dx_{i})^{\#}$. Then we have:
	\begin{eqnarray*}
		\mathcal{L}_{X_{f}}(h)(dx_{i},dx_{j})&=& X_{f}.h_{ij}-h\left(\mathcal{L}_{X_{f}}dx_{i},dx_{j}\right)-h\left(dx_{i},\mathcal{L}_{X_{f}}dx_{j}\right)\\
		&=& -X_{f}.h_{ij}+\prec dx_{i},[X_{f},X_{j}]\succ+\prec dx_{j},[X_{f},X_{i}]\succ\\
		&=& -X_{f}.h_{ij}+\prec dx_{i},[df,dx_{j}]^{\#}\succ+\prec dx_{j},[df,dx_i]^{\#}\succ\\
		&=& -X_{f}.h_{ij}+\prec [df,dx_{j}],X_{i}\succ+\prec [df,dx_{i}],X_{j}\succ\\
		&=& -X_{f}.h_{ij}+\prec \na_{X_{j}}df,X_{i}\succ-\prec \na_{X_{f}}dx_{j},X_{i}\succ\\
		& &+\prec \na_{X_{i}}df,X_{j}\succ-\prec \na_{X_{f}}dx_{i},X_{j}\succ.
	\end{eqnarray*}
   Now since $\na dx_{i}=0$ and $\prec \na_{X}df,Y\succ=\prec \na_{Y}df,X\succ$, we get 
   \begin{equation*}
   \mathcal{L}_{X_{f}}(h)(dx_{i},dx_{j})=-\na_{X_{f}}(h)(dx_{i},dx_{j})+2\prec \na_{X_{i}}df,X_{j}\succ.
   \end{equation*}
\end{proof}

We recall that a function $f\in C^{\infty}(M)$ is affine if $f$ is an affine function in local affine coordinates. In what follows, we introduce the following space.
\begin{equation*}
\mathcal{E}:=\{ f\in C^{\infty}(M)\ /\  \prec\na_{\al^{\#}}df,\be^{\#}\succ=0,\quad \forall  \al,\beta\in\Om^{1}(M)\};
\end{equation*}
which is the space of smooth functions $f:M\to \R$ that are affine along the leaves of the affine foliation (i.e. the restriction of $f$ to any leaf $L\subset M$ is affine).
\begin{example}
	Let $(\mathcal{A},.)$ be a $n$-dimensional commutative associative algebra. We have seen in \cite{ABB1} Proposition $4.2$ that the dual $\mathcal{A}^{*}$ carries a linear K-V structure $(\nabla,h)$ where $\nabla$ is the canonical affine connection on $\mathcal{A}^{*}$ and $h$ is the linear symmetric bivector field on $\mathcal{A}^{*}$ given by$$
	h(u,v)(\alpha)=\prec \alpha,u(\al).v(\al)\succ
	$$where $\al\in\mathcal{A}^{*}$ and $u,v\in\Om^{1}(\mathcal{A}^{*})=C^{\infty}(\mathcal{A}^{*},\mathcal{A})$. Now we take $\mathcal{A}=\mathbb{R}^{2}$ with the commutative associative product given by $e_{1}.e_{1}=e_{1}$ and the others products are zero and denote by $(x,y)$ the canonical dual coordinates of $\mathcal{A}^{*}$, so we have
	$$(dx)^{\#}=x\partial_{x}\text{ and }(dy)^{\#}=0.$$ 
	From Proposition \ref{Pro 0} it follows that for all $f\in C^{\infty}(\mathcal{A}^{*})$ which depend only on the $y$-variable $$\mathcal{L}_{X_{f}}(h)=0.$$
\end{example}

We know that for any K-V manifold $(M,\nabla,h)$ the tangent bundle $(TM,\Pi)$ is a Poisson manifold. So we can ask the following question: When the vertical lift $X_{f}^{v}$ and the horizontal lift $X_{f}^{h}$ are Poisson vector fields on $(TM,\Pi)$?  
\begin{proposition}{\label{Prop 0}}
	For any $f\in C^{\infty}(M)$ we have 
	\begin{enumerate}
		\item $X_{f}^{v}$ is the Hamiltonian vector field on $(TM,\Pi)$ associated to the function $f\circ p\in C^{\infty}(TM)$.
		\item $X_{f}^{h}$ is a Poisson vector field on $(TM,\Pi)$ if and only if  $f\in \mathcal{E}$.
	\end{enumerate} 
\end{proposition}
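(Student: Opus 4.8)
The first assertion is immediate from the structure already recorded. Since $\al^h=p^*\al$ for every $\al\in\Om^1(M)$, we have $d(f\circ p)=p^*(df)=(df)^h$, and applying the second relation in \eqref{eq 12} gives $\Pi_{\#}(d(f\circ p))=\Pi_{\#}((df)^h)=((df)^{\#})^v=X_f^v$. Thus $X_f^v$ is the Hamiltonian vector field of $f\circ p$ on $(TM,\Pi)$, and there is nothing more to do.

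For the second assertion the plan is to compute the bivector field $\mathcal{L}_{X_f^h}\Pi$ explicitly and to show that it vanishes exactly when $f\in\mathcal{E}$; recall that $X_f^h$ is a Poisson vector field of $(TM,\Pi)$ if and only if $\mathcal{L}_{X_f^h}\Pi=0$. First I would determine the action of $\mathcal{L}_{X_f^h}$ on lifted one-forms. Evaluating on the lifts $Y^v,Y^h$ and using the bracket identities \eqref{eq 8} (in the form $[X_f^h,Y^h]=[X_f,Y]^h$ and $[X_f^h,Y^v]=(\na_{X_f}Y)^v$), together with the fact that $X_f^h(g\circ p)=(X_f.g)\circ p$, one finds
\begin{equation*}
\mathcal{L}_{X_f^h}(\al^h)=(\mathcal{L}_{X_f}\al)^h\esp \mathcal{L}_{X_f^h}(\al^v)=(\na_{X_f}\al)^v.
\end{equation*}

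Next I would evaluate $\mathcal{L}_{X_f^h}\Pi$ on the three kinds of pairs of lifted one-forms; these determine the bivector since $(dx_i)^v,(dx_i)^h$ span $T^*_u(TM)$ at each $u$. Writing $(\mathcal{L}_{X_f^h}\Pi)(\theta,\eta)=X_f^h.\Pi(\theta,\eta)-\Pi(\mathcal{L}_{X_f^h}\theta,\eta)-\Pi(\theta,\mathcal{L}_{X_f^h}\eta)$ and using \eqref{eq 11}, the $(v,v)$ and $(h,h)$ pairs give zero, because the two displayed Lie derivatives preserve the vertical/horizontal type while $\Pi$ vanishes on same-type pairs. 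The only surviving component is
\begin{equation*}
(\mathcal{L}_{X_f^h}\Pi)(\al^h,\be^v)=\left(X_f.h(\al,\be)-h(\mathcal{L}_{X_f}\al,\be)-h(\al,\na_{X_f}\be)\right)\circ p.
\end{equation*}

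The decisive step, and the one I expect to be the main obstacle, is to reduce this expression to $-\prec\na_{\al^{\#}}df,\be^{\#}\succ\circ p$. I would carry this out in an affine chart $(x_1,\dots,x_n)$ with $\al=dx_i$ and $\be=dx_j$, so that $\na_{X_f}dx_j=0$ and $\mathcal{L}_{X_f}dx_i=d(X_f(x_i))$ with $X_f(x_i)=\sum_k f_k h_{ik}$ and $f_k=\partial f/\partial x_k$. Expanding, the resulting quantity splits into a part linear in the first derivatives $f_k$ and a part involving the Hessian $f_{kl}=\partial^2f/\partial x_k\partial x_l$. The linear part is precisely $\sum_{k,l}f_k\big(h_{lk}\partial_{x_l}h_{ij}-h_{lj}\partial_{x_l}h_{ik}\big)$, which vanishes identically by the contravariant Codazzi equation \eqref{eq2}; this is exactly where the hypothesis that $(M,\na,h)$ is Koszul--Vinberg enters. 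The Hessian part is $-\sum_{k,l}h_{ik}h_{jl}f_{kl}=-\prec\na_{(dx_i)^{\#}}df,(dx_j)^{\#}\succ$. Since this last expression is $C^{\infty}(M)$-bilinear in $\al,\be$, its vanishing for all $i,j$ is equivalent to $\prec\na_{\al^{\#}}df,\be^{\#}\succ=0$ for all $\al,\be$, i.e. to $f\in\mathcal{E}$, which finishes the argument.
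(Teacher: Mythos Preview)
Your proof is correct and follows essentially the same line as the paper's: part~1 is obtained from $\Pi_{\#}((df)^h)=X_f^v$, and for part~2 both arguments compute $\mathcal{L}_{X_f^h}\Pi$ on pairs of lifted one-forms, observe that the $(v,v)$ and $(h,h)$ components vanish, and reduce the mixed component to $\pm\prec\na_{\al^{\#}}df,\be^{\#}\succ\circ p$. The only minor difference is in this last reduction: the paper works coordinate-free, manipulating $(\mathcal{L}_{X_f^h}\Pi)(\al^v,\be^h)$ via the Lie-algebroid identities $[\al^{\#},\be^{\#}]=[\al,\be]_h^{\#}$ and $[\al,\be]_h=\na_{\al^{\#}}\be-\na_{\be^{\#}}\al$, whereas you pass to an affine chart and invoke the Codazzi equation~\eqref{eq2} directly.
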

\begin{proof}
	 From (\ref{eq 11}) we get $X_{f}^{v}=\Pi_{\#}((df)^{h})=\Pi_{\#}(d(f\circ p))$ which leads to $1$. For the second assertion we need to compute $\mathcal{L}_{X_{f}^h}(\Pi)$. Let $\al,\be\in\Om^{1}(M)$, by using the formulas proved in \cite{ABB1} Proposition $3.6$, we get that  		
	\begin{equation*}
		\mathcal{L}_{X_{f}^h}(\Pi)(\alpha^{v},\beta^{v})=\mathcal{L}_{X_{f}^h}(\Pi)(\al^{h},\be^{h})=0.	\end{equation*}
Furthermore we have  
		\begin{eqnarray*}
		\mathcal{L}_{X_{f}^h}(\Pi)(\al^{v},\be^{h})&=&X_{f}^h.\Pi(\al^{v},\be^{h})-\Pi(\mathcal{L}_{X_{f}^h}\al^{v},\be^{h})-\Pi(\al^{v},\mathcal{L}_{X_{f}^h}\be^{h})\\
		&=& - X_{f}^h.(h(\al,\be)\circ p)-\Pi((\na_{X_{f}}\al)^v,\be^h)-\Pi(\al^v,(\mathcal{L}_{X_{f}}\be)^h)\\
		&=&-(X_{f}.h(\al,\be))\circ p + h(\na_{X_{f}}\al,\be)\circ p+h( \al,\mathcal{L}_{X_{f}}\be)\circ p\\
		&=&\left(-X_{f}.h(\al,\be)+ \prec\na_{X_{f}}\al,\be^\#\succ+\prec \mathcal{L}_{X_{f}} \be,\al^\#\succ \right) \circ p  \\
		&=&\left(\prec\mathcal{L}_{X_{f}} \be,\al^\#\succ-\prec\al,\na_{X_{f}}\be^\#\succ\right) \circ p  \\
		&=& \left( X_{f}.h(\al,\be)-\prec\be,[X_{f},\al^\#]\succ-\prec\al,\na_{X_{f}}\be^\#\succ  \right)\circ p \\
		&=& \left(\prec\na_{X_{f}}\al,\be^\#\succ-\prec\be,[X_{f},\al^\#]\succ\right)\circ p.
	\end{eqnarray*}
Now since $[\al^\#,\be^\#]=[\al,\be]_h^\#$ and $\na_{\al^\#}\be-\na_{\be^\#}\al=[\al,\be]_h$, we get 
\begin{equation*}
\mathcal{L}_{X_{f}^h}(\Pi)(\al^{v},\be^{h})=\prec \na_{\al^\#}df,\be^\#\succ\circ p.
\end{equation*}
\end{proof}

Now we consider the two vector spaces:
\begin{equation*}
\mathcal{V}_{h}:=\{X_{f}\in\Ga(TM)|\text{ } f\in \mathcal{E}\}\quad \text{ and }\quad\mathcal{V}_{\Pi}:=\{X_{f}^{h}\in\Ga(T(TM))|\text{ } f\in \mathcal{E}\}.
\end{equation*} 
\begin{proposition}{\label{Pro 1}}  $\mathcal{V}_{h}$ and $\mathcal{V}_{\Pi}$ are two abelian Lie subalgebras of vector fields.
\end{proposition}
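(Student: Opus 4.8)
The plan is to reduce everything to the single computation $[X_f,X_g]=0$ for $f,g\in\mathcal{E}$, since once this vanishing is established both assertions follow almost formally. First I would rewrite the defining condition of $\mathcal{E}$ in a more usable form. Using the symmetry of $h$ one has $\prec\ga,\be^{\#}\succ=h(\ga,\be)=\be(\ga^{\#})$ for any one-forms $\ga,\be$, so for a fixed $\al$ the vanishing of $\prec\na_{\al^{\#}}df,\be^{\#}\succ$ for all $\be$ is equivalent to $(\na_{\al^{\#}}df)^{\#}=0$. Hence
\[
f\in\mathcal{E}\ \Longleftrightarrow\ (\na_{\al^{\#}}df)^{\#}=0\quad\text{for all }\al\in\Om^{1}(M),
\]
which is the form of the condition I will actually exploit.

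Next I would compute the bracket of two K-V Hamiltonian fields. Since $(T^{*}M,h_{\#},[\;,\;]_h)$ is a Lie algebroid (Theorem $2.4$ in \cite{ABB1}), its anchor $h_{\#}$ is a morphism of Lie algebras, so $[\al^{\#},\be^{\#}]=[\al,\be]_{h}^{\#}$ (as already noted in the proof of Proposition \ref{Prop 0}). Taking $\al=df$, $\be=dg$ and recalling $X_f=(df)^{\#}$ together with the definition (\ref{eq 0}) of $[\;,\;]_h$, I obtain
\[
[X_f,X_g]=[df,dg]_{h}^{\#}=\left(\na_{X_f}dg-\na_{X_g}df\right)^{\#}=(\na_{X_f}dg)^{\#}-(\na_{X_g}df)^{\#}.
\]
Now I apply the reformulated membership condition: since $g\in\mathcal{E}$, taking $\al=df$ gives $(\na_{X_f}dg)^{\#}=(\na_{(df)^{\#}}dg)^{\#}=0$, and since $f\in\mathcal{E}$, taking $\al=dg$ gives $(\na_{X_g}df)^{\#}=0$. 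Therefore $[X_f,X_g]=0$.

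Finally I would assemble the two statements. The map $f\mapsto X_f=(df)^{\#}$ is linear and $\mathcal{E}$ is a linear subspace (its defining relations are linear in $f$), so $\mathcal{V}_h$ is a linear subspace of $\Ga(TM)$; the vanishing just proved shows its bracket is identically zero, so it is an abelian Lie subalgebra. For $\mathcal{V}_\Pi$ I would use the first relation in (\ref{eq 8}), namely $[X^{h},Y^{h}]=[X,Y]^{h}$, to get $[X_f^{h},X_g^{h}]=[X_f,X_g]^{h}=0$; since the horizontal lift is linear, $\mathcal{V}_\Pi$ is again a linear subspace, now of $\Ga(T(TM))$, and it is abelian for the same reason.

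I do not expect a serious obstacle here: the only genuinely substantive point is the first one, recognizing that the defining condition of $\mathcal{E}$ is \emph{exactly} equivalent to $(\na_{\al^{\#}}df)^{\#}=0$, because this is precisely what is needed to annihilate the two terms of $[df,dg]_h^{\#}$. Everything else is a direct application of the Lie-algebroid anchor property and of the bracket formulas (\ref{eq 0}) and (\ref{eq 8}) already recorded in the preliminaries.
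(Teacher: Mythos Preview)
Your proof is correct and follows essentially the same route as the paper: the paper also writes $[X_{f_1},X_{f_2}]=([df_1,df_2]_h)^{\#}=(\na_{X_{f_1}}df_2-\na_{X_{f_2}}df_1)^{\#}=0$ and then deduces the $\mathcal{V}_\Pi$ statement from $[X_{f_1}^{h},X_{f_2}^{h}]=[X_{f_1},X_{f_2}]^{h}$. Your only addition is making explicit the equivalence $f\in\mathcal{E}\Leftrightarrow(\na_{\al^{\#}}df)^{\#}=0$, which the paper leaves implicit in the final equals sign.
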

\begin{proof} For all $f_1,f_2\in E$ we have
	\begin{equation*}
	[X_{f_1},X_{f_2}]=([df_1,df_2]_{h})^{\#}= \left( \na_{X_{f_1}}df_2-\na_{X_{f_2}}df_{f_1}\right)^{\#}=0.
	\end{equation*}
The other assumption follows from $[X^{h}_{f_1},X^{h}_{f_2}]=[X_{f_1},X_{f_2}]^{h}$. 	
\end{proof}

It is well known that for any affine manifold $(M,\na)$, the space of vector fields $\Ga(TM)$ endowed with the product $X\bullet Y=\na_{X}Y$ is a left symmetric algebra, i.e, for any $X,Y,Z\in\Ga(TM)$,
\[ \mathrm{ass}(X,Y,Z)= \mathrm{ass}(Y,X,Z),\]where $\mathrm{ass}(X,Y,Z)=(X\bullet Y)\bullet Z-X\bullet(Y\bullet Z)$.
It is therefore natural to look at the behavior of the subspaces $\mathcal{V}_{h}$ and $\mathcal{V}_{\Pi}$ with respect to the left symmetric structures in $\Ga(TM)$ and in $\Ga(TTM)$. 
\begin{theorem}\label{Theorem 0} Let $(M,\na,h)$ be a K-V manifold satisfying the following condition:
 \begin{equation}{\label{Special class}}
f_{1},f_{2}\in \mathcal{E}\Longrightarrow h(df_{1},df_{2}) \in \mathcal{E}. \end{equation}
Then  $(\mathcal{V}_{h},\bullet)$ and $(\mathcal{V}_{\Pi},\bullet)$ are two commutative associative subalgebras of $(\Ga(TM),\bullet)$ and $(\Ga(T(TM)),\bullet)$ respectively. Moreover, the map $X_{f}\mapsto X_{f}^{h}$ is an isomorphism from $\mathcal{V}_{h}$ to $\mathcal{V}_{\Pi}$.
\end{theorem}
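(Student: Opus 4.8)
The plan is to reduce the entire statement to one identity: for all $f_1,f_2\in\mathcal{E}$,
\begin{equation}
\na_{X_{f_1}}X_{f_2}=X_{h(df_1,df_2)}.\tag{$\star$}
\end{equation}
Once $(\star)$ is in hand, every assertion follows formally, so the real work is to prove it. First I would use the relation $(\mathcal{D}_{\al}\be)^{\#}=\na_{\al^{\#}}\be^{\#}$ of (\ref{eq 3}) to write $\na_{X_{f_1}}X_{f_2}=(\mathcal{D}_{df_1}df_2)^{\#}$, so that it suffices to compare $\mathcal{D}_{df_1}df_2$ with the exact one-form $d\big(h(df_1,df_2)\big)$. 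Expanding $\prec\mathcal{D}_{df_1}df_2,X\succ$ through the defining formula (\ref{eq 1}) and $\prec d(h(df_1,df_2)),X\succ=X.h(df_1,df_2)$ through the Leibniz rule, the two $(\na_X h)(df_1,df_2)$ contributions cancel; using the symmetry of the Hessian $\prec\na_X df,Y\succ=\prec\na_Y df,X\succ$ and the symmetry of $h$ to reorganize the remaining terms, I expect to arrive at $\mathcal{D}_{df_1}df_2=d(h(df_1,df_2))-\na_{X_{f_2}}df_1$. The decisive point is then that $f_1\in\mathcal{E}$ means precisely $\prec\na_{X_{f_2}}df_1,\be^{\#}\succ=0$ for every $\be$, so that $(\na_{X_{f_2}}df_1)^{\#}=0$; applying $\#$ to the last equality produces $(\star)$.

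From $(\star)$ the algebraic conclusions are immediate. Closure of $\mathcal{V}_h$ under $\bullet$ uses the hypothesis (\ref{Special class}): since $h(df_1,df_2)\in\mathcal{E}$, the field $X_{f_1}\bullet X_{f_2}=X_{h(df_1,df_2)}$ again lies in $\mathcal{V}_h$. Commutativity is clear because the right-hand side of $(\star)$ is symmetric in $f_1,f_2$ (as $h$ is symmetric), giving $X_{f_1}\bullet X_{f_2}=X_{f_2}\bullet X_{f_1}$; equivalently this is the torsion-free reformulation of $[X_{f_1},X_{f_2}]=0$ from Proposition \ref{Pro 1}. For associativity I would avoid any computation and invoke the purely algebraic fact that a commutative left-symmetric algebra is associative: combining the left-symmetry identity $\mathrm{ass}(X,Y,Z)=\mathrm{ass}(Y,X,Z)$ with commutativity first yields $X\bullet(Y\bullet Z)=Y\bullet(X\bullet Z)$, and feeding this back together with commutativity gives $(X\bullet Y)\bullet Z=X\bullet(Y\bullet Z)$. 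As $(\Ga(TM),\bullet)$ is left-symmetric and $\mathcal{V}_h$ is a commutative subalgebra, it is associative.

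For the bundle side the key is that the horizontal lift intertwines the two products: by (\ref{eq 10}) one has $\overline{\na}_{X^{h}}Y^{h}=(\na_X Y)^{h}$, whence $X_{f_1}^{h}\bullet X_{f_2}^{h}=(\na_{X_{f_1}}X_{f_2})^{h}=(X_{h(df_1,df_2)})^{h}$, which lies in $\mathcal{V}_\Pi$ again by (\ref{Special class}). Thus $\mathcal{V}_\Pi$ is closed, and commutativity and associativity transfer verbatim through the lift. Finally $X_f\mapsto X_f^{h}$ is a linear bijection $\mathcal{V}_h\to\mathcal{V}_\Pi$ by the very definition of $\mathcal{V}_\Pi$, and the same relation $(\na_{X_{f_1}}X_{f_2})^{h}=\overline{\na}_{X_{f_1}^{h}}X_{f_2}^{h}$ shows it respects $\bullet$, so it is an isomorphism of algebras.

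The main obstacle is $(\star)$; everything afterwards is formal bookkeeping. The two delicate points inside it are the correct use of the Hessian symmetry to cancel the $\na h$-terms, and the observation that the surviving one-form $\na_{X_{f_2}}df_1$ need not vanish but does vanish after applying $\#$, which is exactly the content of $f_1\in\mathcal{E}$. As a cross-check one may instead verify $(\star)$ in affine coordinates, where the $\mathcal{E}$-condition $\sum_{k,l}h_{ak}h_{bl}\,\partial_k\partial_l f=0$ annihilates all Hessian contributions and the Codazzi equation (\ref{eq2}) --- equivalently the total symmetry of $\sum_l h_{il}\partial_l h_{jk}$ in $i,j,k$ --- identifies the two remaining cubic expressions.
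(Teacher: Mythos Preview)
Your proposal is correct and follows essentially the same route as the paper: both reduce everything to the identity $X_{f_1}\bullet X_{f_2}=X_{h(df_1,df_2)}$, obtained via $(\mathcal{D}_{df_1}df_2)^{\#}=\na_{X_{f_1}}X_{f_2}$ from (\ref{eq 3}) together with the $\mathcal{E}$-condition, and then invoke hypothesis (\ref{Special class}) for closure. The paper compresses the computation of $(\star)$ into a single chain $\prec\al,(\mathcal{D}_{df_1}df_2)^{\#}\succ=\prec\mathcal{D}_{df_1}df_2,\al^{\#}\succ=\al^{\#}.h(df_1,df_2)$ and declares the rest ``obvious'', whereas you explicitly isolate the one-form identity $\mathcal{D}_{df_1}df_2=d(h(df_1,df_2))-\na_{X_{f_2}}df_1$ before applying $\#$, and spell out commutativity, the commutative-LSA $\Rightarrow$ associative step, and the intertwining $\overline{\na}_{X^{h}}Y^{h}=(\na_{X}Y)^{h}$ for $\mathcal{V}_{\Pi}$; these are exactly the details the paper omits.
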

\begin{proof} Let $f_{1},f_{2}\in \mathcal{E}$. For any  $\alpha\in\Om^{1}(M)$ we have 
	\begin{eqnarray*}
	\prec \al,X_{f_{1}}\bullet X_{f_{2}}\succ&=& \prec \al,(\mathcal{D}_{df_{1}}df_{2})^{\#}\succ\\
	&=& \prec \mathcal{D}_{df_{1}}df_{2},\al^{\#}\succ\\
	&=& \al^{\#}.h(df_{1},df_{2})\\
	&=& \prec \al,X_{h(df_{1},df_{2})}\succ.	
	\end{eqnarray*} 
Hence $X_{f_{1}}\bullet X_{f_{2}}=X_{h(df_{1},df_{2})}$ which is in $\mathcal{V}_{h}$ by the condition (\ref{Special class}). The remainder of the theorem is obvious.
\end{proof}
	The class of K-V manifolds satisfying condition (\ref{Special class}) deserves a special study that might be  the main subject of another paper; here  we only give an example of such class of K-V manifolds.
\begin{example} Let $(\mathcal{A},.,B)$ be a finite dimensional commutative and associative algebra endowed with a symmetric scalar $2$-cycle. We recall from \cite{ABB1} Proposition $4.2$ that the dual $\mathcal{A}^{*}$ carries a K-V structure $(\nabla,h)$ where $\nabla$ is the canonical affine structure on $\mathcal{A}^{*}$ and $h$ is the affine symmetric bivector field on $\mathcal{A}^{*}$ given by \begin{equation*}
		h(u,v)(\alpha)=\prec \alpha,u(\al).v(\al)\succ+B(u(\al),v(\al))
		\end{equation*}
		where $\al\in\mathcal{A}^{*}$ and $u,v\in\Om^{1}(\mathcal{A}^{*})$. We consider the canonical coordinates system $(x_{1},\ldots,x_{n})$ on $\mathcal{A}^{*}$ associated to a basis $(e_{1},\ldots,e_{n})$; then \begin{equation*}
		h(dx_{i},dx_{j})=b_{ij}+\sum_{k=1}^{n}C^{k}_{ij}x_{k},
		\end{equation*}
		where $C^{k}_{ij}=\prec e_{k}^{*},e_{i}.e_{j}\succ$ and $b_{ij}=B(e_{i},e_{j})$. Hence we get that a function $f$ belongs to $ \mathcal{E}$ if and only if for all $i,j=1,\cdots,n$ we have  \begin{equation}{\label{example 1}}
		\sum_{l,k}h_{il}h_{jk}\frac{\partial^{2}f}{\partial x_{l}\partial x_{k}}=0. 
		\end{equation} 
 Let $f_{1},f_{2}\in\mathcal{E}$, then we have
 $$h(df_{1},df_{2})=\displaystyle\sum_{i,j}h_{ij}\frac{\partial f_{1}}{\partial x_{i}}\frac{\partial f_{2}}{\partial x_{j}}.$$
 By a direct computation we get that:
	\begin{eqnarray*}
	\scalebox{0.8}[1]{$\displaystyle\sum_{k,l}h_{il}h_{jk}\frac{\partial^{2}h(df_{1},df_{2})}{\partial x_{k}\partial x_{l}}$}&=&\scalebox{0.8}[1]{$\displaystyle \sum_{k,l,m,s}h_{il}h_{jk}h_{ms}\frac{\partial^{3}f_{1}}{\partial x_{k}\partial x_{l}\partial x_{m}}\frac{\partial f_{2}}{\partial x_{s}}+h_{il}h_{jk}h_{ms}\frac{\partial^{2}f_{1}}{\partial x_{l}\partial x_{m}}\frac{\partial^{2} f_{2}}{\partial x_{k}\partial x_{s}}$}\\
	& &\scalebox{0.8}[1]{$\displaystyle+h_{il}h_{jk}C^{k}_{ms}\frac{\partial^{2}f_{1}}{\partial x_{l}\partial x_{m}}\frac{\partial f_{2}}{\partial x_{s}}+h_{il}h_{jk}h_{ms}\frac{\partial^{2}f_{1}}{\partial x_{k}\partial x_{m}}\frac{\partial^{2} f_{2}}{\partial x_{l}\partial x_{s}}$}\\
	& &\scalebox{0.8}[1]{$\displaystyle+h_{il}h_{jk}h_{ms}\frac{\partial f_{1}}{\partial x_{m}}\frac{\partial^{3}f_{2} }{\partial x_{k}\partial x_{l}\partial x_{s}}+h_{il}h_{jk}C^{k}_{ms}\frac{\partial f_{1}}{\partial x_{m}}\frac{\partial^{2} f_{2}}{\partial x_{l}\partial x_{s}}$}\\
	& &\scalebox{0.8}[1]{$\displaystyle+h_{il}h_{jk}C^{l}_{ms}\frac{\partial^{2} f_{1}}{\partial x_{k}\partial x_{m}}\frac{\partial f_{2}}{\partial x_{s}}+h_{il}h_{jk}C^{l}_{ms}\frac{\partial f_{1}}{\partial x_{m}}\frac{\partial^{2} f_{2}}{\partial x_{k}\partial x_{s}}.$}
	\end{eqnarray*}
Using the fact that
 $\displaystyle \sum_{k}h_{ik}C^{k}_{jm}=\sum_{k}h_{jk}C^{k}_{im}$ together with equation (\ref{example 1}) we get
 \begin{eqnarray*}
 	\scalebox{0.8}[1]{$\displaystyle\sum_{k,l,m,s}h_{il}h_{jk}h_{ms}\frac{\partial^{3}f_{1}}{\partial x_{k}\partial x_{l}\partial x_{m}}\frac{\partial f_{2}}{\partial x_{s}}$}&=&\scalebox{0.8}[1]{$\displaystyle \sum_{m,s}h_{ms}\frac{\partial f_{2}}{\partial x_{s}}\frac{\partial}{\partial x_{m}}\left(\sum_{k,l}h_{il}h_{jk}\frac{\partial^{2}f_{1}}{\partial x_{k}\partial x_{l}}\right)-\sum_{k,l,s}h_{jk}\frac{\partial^{2}f_{1}}{\partial x_{k}\partial x_{l}}\frac{\partial f_{2}}{\partial x_{s}}\left(\sum_{m}h_{ms}C^{m}_{il}\right)$}\\
 	& &\scalebox{0.8}[1]{$\displaystyle-\sum_{k,l,s}h_{il}\frac{\partial^{2}f_{1}}{\partial x_{k}\partial x_{l}}\frac{\partial f_{2}}{\partial x_{s}}\left(\sum_{m}h_{ms}C^{m}_{jk}\right)$}\\
 	&=&\scalebox{0.8}[1]{$\displaystyle-\sum_{m,s}C_{is}^{m}\frac{\partial f_{2}}{\partial x_{s}}\left(\sum_{k,l}h_{ml}h_{jk}\frac{\partial^{2}f_{1}}{\partial x_{k}\partial x_{l}}\right)-\sum_{m,s}C_{js}^{m}\frac{\partial f_{2}}{\partial x_{s}}\left(\sum_{k,l}h_{mk}h_{il}\frac{\partial^{2}f_{1}}{\partial x_{k}\partial x_{l}}\right)$}\\
 	&= &0
 \end{eqnarray*}
 and 
$$\displaystyle\sum_{k,l,m,s}h_{il}h_{jk}h_{ms}\frac{\partial^{2}f_{1}}{\partial x_{l}\partial x_{m}}\frac{\partial^{2} f_{2}}{\partial x_{k}\partial x_{s}}=\displaystyle \sum_{k,s}h_{jk}\frac{\partial^{2} f_{2}}{\partial x_{k}\partial x_{s}}\left(\sum_{l,m}h_{il}h_{sm}\frac{\partial^{2}f_{1}}{\partial x_{l}\partial x_{m}}\right)=0.$$
 Hence we get that $h(df_{1},df_{2})\in\mathcal{E}$. This implies that $(\mathcal{A}^{*},\nabla,h)$ satisfies condition (\ref{Special class}). 
\end{example}

In what follows, we give an example of a K-V manifold that does not satisfy the condition (\ref{Special class}).
\begin{example}
	 Consider the K-V manifold $M=(\mathbb{R}^{2},\na,h)$ where $\na$ is the canonical affine structure on $\mathbb{R}^{2}$ and $h=x^{2}\partial_{x}\otimes\partial_{x}$. Let $f:\mathbb{R}^{2}\rightarrow\mathbb{R},(x,y)\mapsto x$, clearly $f\in\mathcal{E}$ but $h(df,df)=f^{2}$ does not belong to the space $\mathcal{E}$ because  $$\prec\na_{(dx)^{\#}}df^{2},(dx)^{\#}\succ=f^{4}\neq 0.$$
\end{example}
\vskip 0.5cm
\section{Koszul-Vinberg maps}\label{sec2}

In this section, we will study smooth maps between Koszul-Vinberg manifolds that preserve these structures.

Let $(M^1,\nabla^1)$ and $(M^2,\nabla^2)$ be two affine manifolds. We will denote by $p_i:TM^i\rightarrow M^i$, the canonical projections. Let $F:M^{1}\rightarrow M^{2}$ be a smooth map. We recall that two vector fields $X\in\Ga(TM^{1})$ and $Y\in\Ga(TM^{2})$ are said to be $F$-related if and only if $T_xF(X_x)=Y_{F(x)}$ for all $x\in M^1$. The map $F$ is affine if and only if for any related vector fields $X^{1},X^{2}\in\Ga(TM^{1})$ and $Y^{1},Y^{2}\in\Ga(TM^{2})$ respectively, the two vector fields $\nabla^{1}_{X^1}{X^2}\in\Ga(TM^{1})$ and $\nabla^{2}_{Y^1}{Y^2}\in\Ga(TM^{2})$ are $F$-related; in a more geometric terms: $F$ is affine if and only if for every vector parallel vector field $X$ along a curve $\gamma$ on $M^{1}$ the image $F_{*}X$ is also parallel along the curve $F\circ\gamma$, or equivalently $F$ is totally geodesic; i.e. the image $F\circ\gamma$ of each geodesic $\gamma$ of $M^{1}$ is a geodesic of $M^{2}$ (see \cite{Kobayashi1}).

Let $(M^{i},\na^{i},h^{i})$ for $i=1,2$ be two K-V manifolds and denote by $\Pi^{i}$ the induced Poisson tensor fields on $TM^{i}$.
\begin{theorem}\label{Theorem 1} 
	Let $F:(M^{1},\nabla^{1},h^{1})\rightarrow(M^{2},\nabla^{2},h^{2})$ be an affine map. Then  the following assertions are equivalent:
	\begin{enumerate}
		\item[$(i)$] $F$ is a K-V map, i.e. for any $\al,\be\in\Om^{1}(M^{2})$ we have
		\begin{equation}{\label{K-V map}}
		h^{1}(F^{*}\al,F^{*}\be)=h^{2}(\al,\be)\circ F.
		\end{equation} 
		\item[$(ii)$] The tangent map $TF:(TM^{1},\Pi^{1})\rightarrow(TM^{2},\Pi^{2})$ is a Poisson map.
		\item[$(iii)$] For any one form $\alpha\in\Omega^{1}(M^{2})$ the vector fields $(F^{*}\alpha)^{\#_{1}}\in\Gamma(TM^{1})$ and $\alpha^{\#_{2}}\in\Gamma(TM^{2})$ are $F$-related.
		\item[$(iv)$] For any $f\in C^{\infty}(M^{2})$, the K-V Hamiltonian vector fields $X_{f}\in\Gamma(TM^{2})$ and $X_{f\circ F}\in\Gamma(TM^{1})$ are $F$-related.
	\end{enumerate}
\end{theorem}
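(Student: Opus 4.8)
The plan is to organize the four conditions around the single substantive equivalence $(i)\Leftrightarrow(ii)$, and to obtain the other two by purely formal unwinding. I would first dispose of $(i)\Leftrightarrow(iii)$. Fix $x\in M^1$ and $\be\in T^*_{F(x)}M^2$. The $F$-relatedness in $(iii)$ says $T_xF\big((F^*\al)^{\#_{1}}_x\big)=(\al^{\#_{2}})_{F(x)}$, and pairing both sides with $\be$ gives, on the left, $\prec F^*\be,(F^*\al)^{\#_{1}}\succ(x)=h^1(F^*\al,F^*\be)(x)$ and, on the right, $\prec\be,\al^{\#_{2}}\succ_{F(x)}=h^2(\al,\be)(F(x))$. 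Requiring equality for every $\be$ is exactly $(i)$, so $(i)\Leftrightarrow(iii)$ is immediate. For $(iii)\Leftrightarrow(iv)$, I would note that with $\al=df$ one has $F^*\al=d(f\circ F)$, hence $(F^*\al)^{\#_{1}}=X_{f\circ F}$ and $\al^{\#_{2}}=X_f$; thus $(iv)$ is precisely $(iii)$ restricted to exact forms. Since the condition in $(iii)$ at the point $x$ depends only on the value $\al_{F(x)}$ (both $(T_xF)^*$ and $\#$ being tensorial), and every covector at $F(x)$ is the differential of some function, this restriction loses no information.

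The heart is $(i)\Leftrightarrow(ii)$, and the key preliminary step is to compute how the cotangent map $(TF)^*$ acts on lifted one-forms. I claim that for every $\al\in\Om^1(M^2)$,
\[
(TF)^*(\al^h)=(F^*\al)^h\esp (TF)^*(\al^v)=(F^*\al)^v.
\]
The horizontal identity is immediate from $\al^h=p_2^*\al$ together with $p_2\circ TF=F\circ p_1$, which gives $(TF)^*(p_2^*\al)=p_1^*(F^*\al)$. The vertical identity requires understanding $T(TF)$ on lifts. On a vertical lift, since the restriction of $TF$ to each fibre $T_xM^1$ is the linear map $T_xF$, formula (\ref{eq 5}) gives $T_u(TF)(X^v_u)=(T_xF\,X)^v_{TF(u)}$. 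On a horizontal lift, I would use the affineness of $F$ in the form that $TF$ intertwines parallel transport, $TF\circ\tau^\ga_{0t}=\tau^{F\circ\ga}_{0t}\circ T_xF$; inserting this into (\ref{eq 6}) yields $T_u(TF)(X^h_u)=(T_xF\,X)^h_{TF(u)}$. Dualizing and invoking the defining pairings of $\al^v$ and $\al^h$ (these lifts kill horizontal, resp. vertical, vectors) then produces the vertical formula.

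Finally I would conclude. At each point of $TM^2$ the lifted covectors $\{\al^h,\al^v\}$ span the cotangent space, and the Poisson-map requirement $\Pi^1\big((TF)^*\xi,(TF)^*\eta\big)=\Pi^2(\xi,\eta)\circ TF$ is tensorial in $\xi,\eta$, so it suffices to test it on the three types of pairs of lifts. Using (\ref{eq 11}) and the two pullback formulas, the $(h,h)$ and $(v,v)$ pairs vanish identically on both sides, while the mixed pair $(\al^h,\be^v)$ yields $h^1(F^*\al,F^*\be)\circ p_1$ on the left and $\big(h^2(\al,\be)\circ F\big)\circ p_1$ on the right; these coincide for all $\al,\be$ precisely when $(i)$ holds. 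This closes $(i)\Leftrightarrow(ii)$ and hence the whole theorem. I expect the main obstacle to be the vertical pullback formula, specifically its horizontal-lift case, since that is the only place where affineness of $F$ is genuinely used (through the commutation of $TF$ with parallel transport); the rest is bilinear bookkeeping and the spanning reduction.
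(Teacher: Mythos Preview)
Your proposal is correct and follows essentially the same route as the paper: the paper also isolates the pullback identities $(TF)^{*}(\al^{h})=(F^{*}\al)^{h}$ and $(TF)^{*}(\al^{v})=(F^{*}\al)^{v}$ as a separate lemma, proving them via the $TF$-relatedness of lifts (with affineness entering exactly through the parallel-transport commutation for horizontal lifts), and then deduces $(i)\Leftrightarrow(ii)$ by testing on the three types of lifted pairs, while $(i)\Leftrightarrow(iii)$ is the same pairing computation and $(iii)\Leftrightarrow(iv)$ is declared obvious. Your only departure is the harmless shortcut $\al^{h}=p_{2}^{*}\al$ for the horizontal identity, which the paper instead obtains through the relatedness of horizontal lifts.
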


In order to show this theorem, we need the following lemma.  
\begin{lemma}\label{Lemma1}
	Let $F:(M^1,\nabla^1)\rightarrow(M^2,\nabla^2)$ be an affine map. Then we have 
	\begin{enumerate}
		\item For any $F$-related vector fields $X\in\Gamma(TM^1)$ and $Y\in\Gamma(TM^2)$, their respective vertical lifts $X^{v}$ and $Y^{v}$ are $TF$-related. The same thing happens for their horizontal lifts $X^{h}$ and $Y^{h}$.
		\item For any $\alpha\in\Omega^{1}(M^2)$ we have
		\begin{equation*}
		(F^{*}\alpha)^{v}=(TF)^{*}(\alpha^{v}) \quad \text{and} \quad (F^{*}\alpha)^{h}=(TF)^{*}(\alpha^{h}).
		\end{equation*} 
	\end{enumerate}	
\end{lemma}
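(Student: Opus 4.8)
The plan is to reduce everything to two structural facts about the tangent map of an affine map: that $TF$ covers $F$, i.e. $p_2\circ TF=F\circ p_1$, and that its restriction to each fibre $T_xM^1$ is the linear map $T_xF$. I would prove the two parts in order, since part $2$ is most naturally deduced from part $1$. For part $1$ I would treat the vertical and horizontal lifts separately, computing $T_u(TF)$ applied to each at an arbitrary $u\in T_xM^1$.

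For the vertical lift I would differentiate the defining curve $t\mapsto u+tX_x$ of \eqref{eq 5}: fibrewise linearity of $TF$ gives $TF(u+tX_x)=TF(u)+t\,T_xF(X_x)$, whence
\begin{equation*}
T_u(TF)(X^v_u)=\frac{d}{dt}_{|_{t=0}}\big(TF(u)+t\,T_xF(X_x)\big),
\end{equation*}
and the $F$-relatedness $T_xF(X_x)=Y_{F(x)}$ identifies the right-hand side with $Y^v_{TF(u)}$. For the horizontal lift I would use the parallel-transport description \eqref{eq 6} together with the characterization of affine maps recalled just before the lemma: $F$ affine means it carries parallel fields along $\gamma$ to parallel fields along $F\circ\gamma$, i.e. $T_{\gamma(t)}F\circ\tau^{\gamma}_{0t}=\tau^{F\circ\gamma}_{0t}\circ T_xF$. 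Differentiating $t\mapsto TF(\tau^\gamma_{0t}(u))$ at $t=0$ and inserting this commutation turns $T_u(TF)(X^h_u)$ into $\frac{d}{dt}_{|_{t=0}}\tau^{F\circ\gamma}_{0t}(T_xF(u))$; since $F\circ\gamma$ starts at $F(x)$ with initial velocity $T_xF(X_x)=Y_{F(x)}$, definition \eqref{eq 6} identifies this with $Y^h_{TF(u)}$, as desired.

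For part $2$ I would observe that a one-form on $TM^1$ is determined by its pairings with vertical and horizontal lifts, and that these realize all of $V_u(M^1)$ and $H_u(M^1)$ at each $u$ (both $X^v_u$ and $X^h_u$ depend only on the value $X_{p(u)}$), so it suffices to test the identity $(F^*\al)^v=(TF)^*(\al^v)$ on $X^v_u$ and $X^h_u$. Pulling these back by $TF$ via part $1$ and invoking the defining pairings of $\al^v$ together with $p_2(TF(u))=F(p_1(u))$ yields the pairing $\prec(F^*\al)_x,X_x\succ$ on vertical lifts and $0$ on horizontal lifts, which is exactly $(F^*\al)^v$ tested against the same arguments. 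The horizontal identity $(F^*\al)^h=(TF)^*(\al^h)$ is even quicker: since $\al^h=p^*\al$, it reads $p_1^*(F^*\al)=(TF)^*(p_2^*\al)$, which is immediate from $F\circ p_1=p_2\circ TF$.

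The only genuinely delicate point is the horizontal lift in part $1$: it is the single step that actually uses the affine hypothesis, and one must be careful to invoke the parallel-transport commutation in the correct direction and to compute the initial velocity of $F\circ\gamma$ correctly. Everything else is bookkeeping with the defining formulas \eqref{eq 5}, \eqref{eq 6} and the relation $p_2\circ TF=F\circ p_1$.
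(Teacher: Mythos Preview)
Your proposal is correct and follows essentially the same approach as the paper: both compute $T_u(TF)$ on vertical lifts via the defining curve and fibrewise linearity, on horizontal lifts via the parallel-transport description \eqref{eq 6} together with the commutation $T_{\gamma(t)}F\circ\tau^{\gamma}_{0t}=\tau^{F\circ\gamma}_{0t}\circ T_xF$ afforded by affinity, and then deduce part~2 by testing against $X^v$ and $X^h$. The only cosmetic difference is your shortcut for the horizontal identity in part~2 via $\al^h=p^*\al$ and $F\circ p_1=p_2\circ TF$, whereas the paper simply says ``similarly''.
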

\begin{proof} 
	Let $x\in M^1$ and $u\in T_{x}M^1$, then we have  
	\begin{eqnarray*}
		T_{u}(TF)(X^v_u)&=& \frac{d}{dt}_{|_{t=0}} T_xF(u+tX_x)\\
		&=& \frac{d}{dt}_{|_{t=0}} T_{x}F(u)+tT_{x}F(X_x)\\
		&=& \frac{d}{dt}_{|_{t=0}} T_{x}F(u)+tY_{F(x)}\\
		&=& Y^v_{TF(u)}.
	\end{eqnarray*}
	Hence $X^v$ and $Y^v$ are $TF$-related. Now let $\gamma:I\rightarrow M^{1}$ be a curve with $\gamma(0)=x$ and $\gamma^\prime(0)=X_x$. Now using equation (\ref{eq 6}) we get
	\begin{equation*}
	T_{u}(TF)(X^h_u)= \frac{d}{dt}_{|_{t=0}} T_{\gamma(t)}F(\tau^\gamma_{0t}(u) ),
	\end{equation*} 
	and by the commutativity of the parallel transport maps, we obtain
	\begin{equation*}
	T_{u}(TF)(X^h_u)=  \frac{d}{dt}_{|_{t=0}} \tau^{F\circ \gamma}_{0t}(TF(u))=Y^h_{TF(u)}.
	\end{equation*} 
	Hence $1)$. Now from this first step, we deduce that for any $X\in\Gamma(TM^{1})$ and $\alpha\in\Omega^{1}(M^{2})$, we have
	\begin{eqnarray*}
		\prec(TF)^{*}\alpha^{v},X^v\succ&=& \prec \alpha^{v},Y^v\succ\circ TF\\
		&=& \prec \alpha,Y\succ\circ p_2\circ  TF\\
		&=& \prec\alpha,Y\succ\circ F\circ  p_1\\
		&=& \prec F^*\alpha,X\succ\circ p_1\\
		&=& \prec (F^*(\alpha))^v,X^v\succ.
	\end{eqnarray*}
	Furthermore,
	\begin{equation*}
	\prec(TF)^{*}(\alpha^{v}),X^h\succ=\prec\alpha^{v},Y^h\succ\circ TF=0=\prec(F^*(\alpha))^v,X^h\succ.
	\end{equation*} 
	This proves that $(TF)^{*}(\alpha^{v})=(F^*(\alpha))^v$. Similarly we get $(TF)^{*}(\alpha^{h})=(F^*(\alpha))^h$. 
\end{proof} 
\begin{proof}[Proof of Theorem \ref{Theorem 1}]
	$(i)\Longrightarrow(ii)$ According to the Lemma \ref{Lemma1} we get that for any $\alpha,\beta\in\Omega^{1}(M^{2})$
	\begin{eqnarray*}
		\Pi^{1}((TF)^{*}(\alpha^h),(TF)^{*}(\beta^h))&=&\Pi^{1}((F^{*}\alpha)^{h},(F^{*}\beta)^{h})\\
		&=& 0 \\
		&=& \Pi^{2}(\alpha^{h},\beta^{h})\circ TF.
	\end{eqnarray*}
	Similarly we get  $\Pi^{1}((TF)^{*}(\alpha^v),(TF)^{*}(\beta^v))=\Pi^{2}(\alpha^{v},\beta^{v})\circ TF$. Furthermore, we have
	\begin{eqnarray*}
		\Pi^{1}((TF)^{*}(\alpha^h),(TF)^{*}(\beta^v))&=&\Pi^{1}((F^{*}\alpha)^{h},(F^{*}\beta)^{v})\\
		&=& h^{1}(F^{*}\alpha,F^{*}\beta)\circ p_{1} \\
		&=& h^{2}(\alpha,\beta)\circ F\circ p_{1}\\
		&=& h^{2}(\alpha,\beta)\circ p_{2}\circ TF\\
		&=& \Pi^{2}(\alpha^{h},\beta^{v})\circ TF.
	\end{eqnarray*}
	
	$(ii)\Longrightarrow(i)$ Since $TF$ is Poisson map hence for any one forms $\alpha,\beta\in\Omega^{1}(M^{2})$ we have 
	\begin{equation*}
	\Pi^{2}(\alpha^{h},\beta^{v})\circ TF=\Pi^{1}((TF)^{*}(\alpha^h),(TF)^{*}(\beta^v)).
	\end{equation*}
	Hence according to the Lemma \ref{Lemma1} we get that 
	\begin{equation*}
	h^{2}(\alpha,\beta)\circ p_{2}\circ TF=h^{1}(F^{*}\alpha,F^{*}\beta)\circ p_{1}.
	\end{equation*}
	Since $p_{2}\circ TF=F\circ p_{1}$ 
	\begin{equation*}
	h^{2}(\alpha,\beta)\circ F\circ p_{1}=h^{1}(F^{*}\alpha,F^{*}\beta)\circ p_{1},
	\end{equation*}
	and then from the surjectivity of $p_1$, the equality (\ref{K-V map}) follows.
	
	$(iii)\Leftrightarrow(i)$ Let $\alpha\in\Omega^{1}(M^{2})$ and put $X:=(F^{*}\alpha)^{\#_{1}}\in\Gamma(TM^{1})$ and $Y:=\alpha^{\#_{2}}\in\Gamma(TM^{2})$. Then equation (\ref{K-V map}) becomes $(F^*\beta)(X)=\beta(Y)\circ F$ for any differential $1$-form $\beta\in\Omega^{1}(M^{2})$; which is equivalent to the fact that the vector fields $X$ and $Y$ are $F$-related.
	
	$(iii)\Leftrightarrow(iv)$ Is obvious.	
\end{proof}

\begin{example}$\ $ \label{Example1}
	\begin{enumerate}
		\item Let $(M^{i},\nabla^{i},h^{i})$ for $i=1,2$ be two K-V manifolds, and consider the K-V tensor field $h=h^1\oplus h^2$ on the affine product manifold $M^{1}\times M^{2}$ (as described in \cite{ABB1} Proposition $2.9$). Then the canonical projections $$p_{i}: (M^{1}\times M^{2},\nabla,h)\rightarrow (M^{i},\nabla^{i},h^{i})$$ are a K-V maps.   
		\item Let $(G,\na,h)$ be a simply connected Lie group equipped with a K-V structure. Then the multiplication map $$m:(G\times G,\na\oplus\na,h\oplus h)\too (G,\na,h)$$ is a K-V map if and only if $G$ is a vector space, $\na$ its canonical affine connection and $h$ is linear (see Corollary $4.5$ in \cite{ABB1}).
		\item Let $(\mathcal{A}_{i},\bullet,B_{i})$ for $i=1,2$ be two commutative associative algebras endowed with two symmetric scalar
		2-cocycle and denote by $(\mathcal{A}^{*},\na,h^{i})$ its associated K-V manifolds. Consider an affine map $F:\mathcal{A}_{1}\rightarrow\mathcal{A}_{2}$, 
		which satisfy  $$F(u\bullet v)=F(u)\bullet F(v)\text{ and } B_{1}(u,v)=B_{2}(f(u),f(v))$$ for all $u,v\in\mathcal{A}_{1}$. Then its dual map $$F^{*}:(\mathcal{A}_{2}^{*},\na,h^{2})\rightarrow (\mathcal{A}_{1}^{*},\na,h^{1})$$ is a K-V map.
		\item  Let $(\mathfrak{g}_{1},\bullet,r^{1},r^{2})$ a K-V algebra (see Definition $6.2$ in \cite{ABB1}) and $(\mathfrak{g}_{1},\bullet)$ a left symmetric algebra. Denote by $(G_{1},\na^{1},r^{1-})$ the K-V manifold associated to $(\mathfrak{g}_{1},\bullet,r^{1})$ and by $(G_{2},\na^{2})$ the affine manifold associated to $(\mathfrak{g}_{1},\bullet)$. Let $F:\mathfrak{g}_{1}\rightarrow\mathfrak{g}_{2}$ be a morphism of left symmetric algebra, $r^{2}$ be the symmetric bivector on $\mathfrak{g}_{2}$ given by:
		$$r^{2}(\al,\be):=r^{1}(F^{*}\al,F^{*}\be),\,\al,\be\in\mathfrak{g}^{*}_{2}$$ 
		and denote by $\tilde{F}:G_{1}\rightarrow G_{2}$ the map which integrate $F$. Then $(\mathfrak{g}_{2},\bullet,r^{2})$ is K-V algebra and  $$\tilde{F}:(G_{1},\na^{1},r^{1-})\rightarrow (G_{2},\na^{2},r^{2-})$$
		is a K-V map.	
	\end{enumerate}
\end{example}

As a consequence of $1.$ in Example \ref{Example1} and Theorem \ref{Theorem 1} we obtain:
\begin{corollary}\label{Corollary5}
	The canonical diffeomorphism $$\psi:=Tp_{1}\times Tp_{2}:(T(M^{1}\times M^{2}),\Pi)\rightarrow (TM^{1}\times TM^{2},\Pi^{1}\oplus\Pi^{2}),$$ is a Poisson map, where $\Pi$ is the Poisson tensor associated to the K-V tensor field $h=h^1\oplus h^2$.
\end{corollary}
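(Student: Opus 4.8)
The plan is to regard $\psi$ as the canonical identification $T(M^1\times M^2)\cong TM^1\times TM^2$ and to verify the Poisson condition $\Pi(\psi^*\theta,\psi^*\eta)=(\Pi^1\oplus\Pi^2)(\theta,\eta)\circ\psi$ directly on a convenient generating family of $1$-forms on $TM^1\times TM^2$. Since both sides of this identity are $C^\infty(TM^1\times TM^2)$-linear in $\theta$ and in $\eta$ (scalar factors being absorbed identically by $\psi^*$ and by $\circ\,\psi$), it suffices to test it on a family of $1$-forms spanning the cotangent spaces pointwise. Writing $\mathrm{pr}_i:TM^1\times TM^2\to TM^i$ for the two projections, such a family is furnished by the vertical and horizontal lifts $\mathrm{pr}_i^*(\al^v)$ and $\mathrm{pr}_i^*(\al^h)$, with $\al\in\Om^1(M^i)$ and $i=1,2$, since the lifts $\al^v,\al^h$ already span $T^*(TM^i)$ pointwise.

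First I would transport these test forms through $\psi^*$. Denoting by $\pi_i:M^1\times M^2\to M^i$ the (affine) factor projections, one has $\mathrm{pr}_i\circ\psi=T\pi_i$, so $\psi^*\big(\mathrm{pr}_i^*(\al^v)\big)=(T\pi_i)^*(\al^v)$ and likewise for the horizontal lift. By Lemma \ref{Lemma1}$(2)$ applied to the affine map $\pi_i$, these equal $(\pi_i^*\al)^v$ and $(\pi_i^*\al)^h$ respectively; that is, $\psi^*$ carries lifts coming from the $i$-th factor to lifts of the pulled-back forms $\pi_i^*\al\in\Om^1(M^1\times M^2)$. This reduces every instance of the Poisson identity to an evaluation of the three bivectors $\Pi$, $\Pi^1$, $\Pi^2$ on pure lifts, where the defining formulas (\ref{eq 11}) apply.

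Next I would split into cases according to lift type and factor. Write $\rho:T(M^1\times M^2)\to M^1\times M^2$ and $\rho_i:TM^i\to M^i$ for the bundle projections, so that $\rho_i\circ T\pi_i=\pi_i\circ\rho$ because $T\pi_i$ covers $\pi_i$. Whenever the two test forms carry the same lift type (both vertical or both horizontal), both sides vanish by the first relation in (\ref{eq 11}). For a horizontal--vertical pair coming from the \emph{same} factor $i$, the left-hand side becomes $h(\pi_i^*\al,\pi_i^*\be)\circ\rho=h^i(\al,\be)\circ\pi_i\circ\rho$, while the right-hand side becomes $h^i(\al,\be)\circ\rho_i\circ T\pi_i$; these coincide via $\rho_i\circ T\pi_i=\pi_i\circ\rho$. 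This diagonal case is exactly the assertion that $Tp_i$ is a Poisson map, which is already guaranteed by assertion $1$ of Example \ref{Example1} together with the implication $(i)\Rightarrow(ii)$ of Theorem \ref{Theorem 1}. For a horizontal--vertical pair coming from \emph{different} factors, the left-hand side is $h(\pi_1^*\al,\pi_2^*\be)\circ\rho$, which vanishes because $h=h^1\oplus h^2$ has no mixed component, while the right-hand side vanishes because $\Pi^1\oplus\Pi^2$ kills forms pulled back from distinct factors. Having matched all cases, tensoriality yields the identity for arbitrary $1$-forms, so $\psi$ is a Poisson map.

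The main obstacle I anticipate is not the diagonal cases---those are precisely encapsulated by Example \ref{Example1}$(1)$ and Theorem \ref{Theorem 1}---but the careful assembly of the two component maps $Tp_i$ into a single Poisson map onto the \emph{product}. The crux is the vanishing of the cross-factor terms, where the hypothesis that the K-V tensor splits as $h=h^1\oplus h^2$ with no mixed part is indispensable; one must faithfully track this absence of mixed terms through the lift formalism of Lemma \ref{Lemma1} and the relations (\ref{eq 11}), while keeping the two distinct uses of the ``canonical projection'' notation (the bundle projections $\rho_i:TM^i\to M^i$ versus the factor projections $\pi_i:M^1\times M^2\to M^i$) rigorously apart.
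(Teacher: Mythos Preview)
Your proposal is correct and follows essentially the same route as the paper: both arguments verify the Poisson identity on the spanning family $\mathrm{pr}_i^*(\al^v),\mathrm{pr}_i^*(\al^h)$, reduce the same-factor cases to the fact that each $Tp_i$ is Poisson (via Example~\ref{Example1}(1) and Theorem~\ref{Theorem 1}), and treat the remaining cases by the structure of $\Pi^1\oplus\Pi^2$. You are somewhat more explicit than the paper about the cross-factor vanishing, which the paper absorbs into a ``similarly'', but the strategy is identical.
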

\begin{proof}
	We know that skew-symmetric tensor $\Pi^{1}\oplus\Pi^{2}$ is the unique Poisson structure on the product manifold $TM^{1}\times TM^{2}$ such that the canonical projections $\widetilde{p}_{i}:(TM^{1}\times TM^{2},\Pi^{1}\oplus\Pi^{2})\rightarrow (TM^{i},\Pi^{i})$ are Poisson maps. Since the one forms given by $\widetilde{p}_{i}^{*}\alpha^{h}_{i}$ and $\widetilde{p}_{i}^{*}\alpha^{v}_{i}$ where $\alpha_{i}\in\Gamma(T^{*}M^{i})$ span the vector space of one forms on $TM^{1}\times TM^{2}$. Then for any $\alpha_{1},\beta_{1}\in\Gamma(T^{*}M^{1})$ we have 
	\begin{eqnarray*}
		\Pi^{1}\oplus\Pi^{2}\left(\widetilde{p}_{1}^{*}\alpha^{h}_{1},\widetilde{p}_{1}^{*}\beta^{v}_{1}\right)\circ \psi&=& \Pi^{1}(\alpha^{h}_{1},\beta^{v}_{1})\circ \widetilde{p}_{1}\circ \psi\\
		&=&	\Pi^{1}(\alpha^{h}_{1},\beta^{v}_{1})\circ Tp_{1}
	\end{eqnarray*}
	According to Theorem \ref{Theorem 1} it follow that 
	$Tp_{i}:(T(M^{1}\times M^{2}),\Pi)\rightarrow (TM^{i},\Pi^{i})$ are Poisson maps. Therefore we have 
	\begin{eqnarray*}
		\Pi^{1}\oplus\Pi^{2}\left(\widetilde{p}_{1}^{*}\alpha^{h}_{1},\widetilde{p}_{1}^{*}\beta^{v}_{1}\right)\circ\psi&=& \Pi\left(Tp_{1}^{*}\alpha^{h}_{1},Tp_{1}^{*}\beta^{h}_{1}\right)\\	
		&=& \Pi\left(\psi^{*}(\widetilde{p}_{1}^{*}\alpha^{h}_{1}),\psi^{*}(\widetilde{p}_{1}^{*}\beta^{v}_{1})\right).
	\end{eqnarray*}
	Similarly, we verify that we have the same last equality for all types of one forms that span the vector space of one forms on $TM^{1}\times TM^{2}$. This implies that $\psi$ is a Poisson map. 
\end{proof}

We finish this section by giving some proprieties about K-V maps.
\begin{proposition}  
	Let $(M^{i},\nabla^{i},h^{i})$ for $i = 1, 2, 3$ be a K-V manifolds, $\phi:(M^{1},\nabla^{1},h^{1})\rightarrow(M^{2},\nabla^{2},h^{2})$ a K-V map and
	$\psi:(M^{2},\nabla^{2},h^{2})\rightarrow(M^{3},\nabla^{3},h^{3})$ an affine map.
	\begin{enumerate}
		\item For any $x\in M^1$, we have $
		\mathrm{rank}(h^{1}_{\#}(x))\geq\mathrm{rank}(h^2_{\#}(\phi(x)))$.
		\item If $(M^{2},\nabla^{2},h^{2})$ is a pseudo-Hessian manifold and $\phi$ is surjective, then $\phi$ is a submersion map.
		\item If $\psi$ is a K-V map, then $\psi\circ\phi$ is a K-V map.
		\item If $\psi\circ\phi$ is a K-V map, and $\phi$ is surjective, then $\psi$ is a K-V map.
		\item If $\phi$ is a diffeomorphism, then $\phi^{-1}$ is a K-V map.
	\end{enumerate}
\end{proposition}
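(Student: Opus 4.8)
The unifying computational tool is a pointwise pushforward identity for the two bivectors, which I would extract from characterization $(iii)$ of Theorem \ref{Theorem 1}. Unwinding the statement that $(\phi^*\alpha)^{\#_1}$ and $\alpha^{\#_2}$ are $\phi$-related, and writing $(\phi^*\alpha)_x=(T_x\phi)^*(\alpha_{\phi(x)})$, the relation $T_x\phi\big((\phi^*\alpha)^{\#_1}_x\big)=\alpha^{\#_2}_{\phi(x)}$ reads $T_x\phi\circ h^1_{\#}(x)\circ(T_x\phi)^*(\alpha_{\phi(x)})=h^2_{\#}(\phi(x))(\alpha_{\phi(x)})$. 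Since every covector at $\phi(x)$ is of the form $\alpha_{\phi(x)}$ for a suitable one-form $\alpha$, this yields the operator identity
\begin{equation*}
T_x\phi\circ h^1_{\#}(x)\circ(T_x\phi)^*=h^2_{\#}(\phi(x)),\qquad x\in M^1,
\end{equation*}
which drives the first two assertions.

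For assertion $1$, I would simply read off ranks: since the rank of a composition of linear maps never exceeds the rank of any individual factor, the displayed identity gives $\mathrm{rank}(h^2_{\#}(\phi(x)))\le\mathrm{rank}(h^1_{\#}(x))$. For assertion $2$, the pseudo-Hessian hypothesis means $h^2_{\#}$ is invertible at every point, so the left-hand side above has rank $\dim M^2$; as this is bounded above by $\mathrm{rank}(T_x\phi)\le\dim M^2$, we conclude that $T_x\phi$ is onto for every $x$, i.e. $\phi$ is a submersion. (Surjectivity of $\phi$ then upgrades this to a surjective submersion, but it is not needed for the submersion property itself.)

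Assertions $3$, $4$ and $5$ are purely algebraic manipulations of the defining equation (\ref{K-V map}), once the relevant map is checked to be affine. For $3$, the composite $\psi\circ\phi$ is affine, and applying $(\psi\circ\phi)^*=\phi^*\circ\psi^*$ together with the K-V identity for $\phi$ and then for $\psi$ gives $h^1((\psi\circ\phi)^*\alpha,(\psi\circ\phi)^*\beta)=h^3(\alpha,\beta)\circ(\psi\circ\phi)$. For $4$, $\psi$ is already affine; feeding $\gamma=\psi^*\alpha$, $\delta=\psi^*\beta$ into the K-V identity for $\phi$ and comparing with the one for $\psi\circ\phi$ yields $\big(h^2(\psi^*\alpha,\psi^*\beta)\big)\circ\phi=\big(h^3(\alpha,\beta)\circ\psi\big)\circ\phi$, and surjectivity of $\phi$ lets us cancel $\phi$ on the right. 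For $5$, $\phi^{-1}$ is affine as the inverse of an affine diffeomorphism; applying the K-V identity for $\phi$ to $\gamma=(\phi^{-1})^*\alpha$, $\delta=(\phi^{-1})^*\beta$ and using $\phi^*\circ(\phi^{-1})^*=\mathrm{id}$ gives $h^1(\alpha,\beta)=h^2((\phi^{-1})^*\alpha,(\phi^{-1})^*\beta)\circ\phi$, which after precomposition with $\phi^{-1}$ is exactly the K-V identity for $\phi^{-1}$.

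The only genuinely delicate point is the derivation of the pushforward identity: one must treat $(iii)$ as a pointwise statement about covectors at $\phi(x)$ and justify that every such covector is attained, so that the operator identity (rather than merely an evaluation) holds. Everything afterwards is bookkeeping with the functoriality of pullback, submultiplicativity of rank, and the two cancellation steps (surjectivity in $4$, bijectivity in $5$).
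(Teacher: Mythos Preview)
The paper states this proposition without proof, so there is no argument to compare against. Your proof is correct: the pushforward identity $T_x\phi\circ h^1_{\#}(x)\circ(T_x\phi)^*=h^2_{\#}(\phi(x))$ follows directly from the defining equation (\ref{K-V map}) (or equivalently from characterization $(iii)$ of Theorem \ref{Theorem 1}) by the duality computation you indicate, and assertions $1$--$2$ are immediate consequences via rank submultiplicativity. The arguments for $3$--$5$ are the standard pullback manipulations and are fine as written; your observation that the surjectivity hypothesis in assertion $2$ is superfluous for the submersion conclusion is also correct.
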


\vskip 0.5cm
\section{Koszul-Vinberg submanifolds}\label{sec3}

By similarity with Poisson submanifolds studied in (\cite{Fernandes},\cite{Laurent-Gengoux},\cite{Weinstein}), we will introduce in this section a class of submanifolds in the context of K-V manifolds.
\vskip 0.15cm
Let $(M,\nabla,h)$ be a K-V manifold. We say that  an immersed submanifold $\iota:N \hookrightarrow M$ is a {\textit{Koszul-Vinberg submanifold}} of $(M,\nabla,h)$, if it can be equipped with a connection $\na^N$ and a K-V bivector field $h^N$ such that the immersion $\iota:N\hookrightarrow M$ becomes a K-V map. In particular, $(N,\na^N)$ is an affine submanifold as studied in (\cite{Pawel}). 

For any vector subspace $V$ of a finite-dimensional vector space $E$ we denote by $V^{\circ}$ the annihilator of $V$ in $E^{*}$.
\begin{proposition}\label{Propostion1}
	Given an immersed affine submanifold $\iota:(N,\nabla^{N})\hookrightarrow(M,\nabla)$, then there is at most one K-V bivector field $h^N$ on $N$ that makes $(N,\nabla,h^N)$ into a K-V submanifold. This happens if and only if any of the following equivalent conditions hold:
	\begin{enumerate}
		\item $\mathrm{Im}(h_{\#}(\iota(x)))\subset T_{x}\iota(T_{x}N)$ for all $x\in N$;
		\item For any $f\in C^\infty(M)$, the K-V Hamiltonian vector field $X_{f}$ is tangent to $\iota(N)$;
		\item For all $x\in N$, $h_{\#}((T_x\iota(T_xN))^{\circ})=0$.
	\end{enumerate}
	If $N$ is a closed submanifold, then these conditions are also equivalent to:
	\begin{itemize}
		\item[$4.$] For any $f\in C^{\infty}(M)$ and $g\in\mathfrak{I}(N):=\{g\in C^{\infty}(M)|\text{ } g_{\mid_N}=0\}$ we have $h(df,dg)\in\mathfrak{I}(N)$. 
	\end{itemize}
\end{proposition}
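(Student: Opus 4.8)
The plan is to read $h^N$ off from the K-V map condition and to recognise conditions 1--3 as exactly the obstruction to this reading being well defined. Writing $W_x:=T_x\iota(T_xN)\subset T_{\iota(x)}M$, the requirement that $\iota$ be a K-V map is
\begin{equation*}
h^N(\iota^*\al,\iota^*\be)=h(\al,\be)\circ\iota,\qquad \al,\be\in\Om^1(M).
\end{equation*}
Since $\iota$ is an immersion, $(T_x\iota)^*:T^*_{\iota(x)}M\to T^*_xN$ is surjective with kernel $W_x^\circ$, so this relation forces the value of $h^N_x$ on every pair of covectors; this yields the uniqueness clause at once. It also reduces existence to two points: well-definedness of the prescription, and the K-V property of the resulting tensor. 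For well-definedness, take $\al$ with $\al_{\iota(x)}\in W_x^\circ$, i.e. $(\iota^*\al)_x=0$; the right-hand side must then vanish, giving $h(\al,\be)(\iota(x))=0$ for all $\be$, that is $h_\#(\al_{\iota(x)})=0$. As $\al_{\iota(x)}$ exhausts $W_x^\circ$, well-definedness is precisely condition 3.

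Next I would prove $1\Leftrightarrow 2\Leftrightarrow 3$ by pointwise linear algebra, the essential input being the symmetry of $h$: $h(\al,\xi)=h(\xi,\al)$, so that $\prec\xi,h_\#\al\succ=\prec\al,h_\#\xi\succ$. For the subspace $W_x$ one then has $\mathrm{Im}(h_\#)\subset W_x=(W_x^\circ)^\circ$ iff $\prec\xi,h_\#\al\succ=0$ for all $\al$ and all $\xi\in W_x^\circ$, iff $\prec\al,h_\#\xi\succ=0$ for all $\al$ and all $\xi\in W_x^\circ$, iff $h_\#(W_x^\circ)=0$; this is $1\Leftrightarrow 3$. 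For $1\Leftrightarrow 2$ it suffices to note that $X_f(\iota(x))=h_\#(df_{\iota(x)})$, that tangency of $X_f$ to $\iota(N)$ means $X_f(\iota(x))\in W_x$, and that the differentials $df_{\iota(x)}$ fill out $T^*_{\iota(x)}M$; hence condition 2 holds for all $f$ exactly when $\mathrm{Im}(h_\#)\subset W_x$.

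The step I expect to be the real obstacle is verifying that the well-defined tensor $h^N$ is genuinely a K-V bivector field. Symmetry is inherited from $h$. For the contravariant Codazzi equation I would exploit that an affine immersion is totally geodesic, so that near each point of $N$ there exist affine coordinates $(x_1,\ldots,x_n)$ of $M$ in which $\iota(N)$ is the slice $\{x_{k+1}=\cdots=x_n=0\}$ and $(x_1,\ldots,x_k)$ restrict to affine coordinates of $(N,\na^N)$. In these coordinates condition 1 forces $h_{ij}=0$ on $N$ whenever $i>k$ or $j>k$, while $h^N_{ab}=h_{ab}|_N$ for $a,b\le k$ (which also exhibits the smoothness of $h^N$). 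Restricting the coordinate Codazzi identity (\ref{eq2}) for $h$ to tangential indices $a,b,d\le k$ and evaluating on $N$, every transverse contribution $h_{al}\,\partial_{x_l}h_{bd}$ with $l>k$ drops out because $h_{al}$ vanishes on $N$, whereas each surviving tangential derivative $\partial_{x_l}h_{bd}$ with $l\le k$ restricts to $\partial_{x_l}h^N_{bd}$ along $N$; the identity thereby collapses to the Codazzi equation for $h^N$.

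Finally, for $N$ closed I would identify condition 4 with condition 3. The key is that the conormal space $W_x^\circ$ coincides with $\{dg_{\iota(x)}:g\in\mathfrak{I}(N)\}$: a function vanishing on the closed submanifold has differential in the conormal bundle along $N$, and conversely every conormal covector is realised this way. Consequently $h(df,dg)\in\mathfrak{I}(N)$ for all $f\in C^\infty(M)$ and $g\in\mathfrak{I}(N)$ says $h_{\iota(x)}(df,\xi)=0$ for every $df$ and every $\xi\in W_x^\circ$, i.e. $h_\#(W_x^\circ)=0$, which is condition 3.
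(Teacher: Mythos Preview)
Your proof is correct and covers all parts of the statement. The one substantive divergence from the paper is in the step you flagged as ``the real obstacle'', namely showing that the well-defined $h^N$ satisfies the contravariant Codazzi equation. You do this by choosing adapted affine coordinates in which $\iota(N)$ is a coordinate slice (relying on the fact that a totally geodesic submanifold of a flat manifold is locally an affine subspace), then restricting the coordinate identity \eqref{eq2} and killing the transverse terms. The paper instead lifts the question to the tangent bundle: once $h^N$ is constructed, the associated skew bivector $\Pi^N$ on $TN$ and the Poisson bivector $\Pi$ on $TM$ are $T\iota$-related, hence so are their Schouten brackets, whence $[\Pi^N,\Pi^N]=0$; by Theorem~3.3 of \cite{ABB1} this is equivalent to $h^N$ being K-V. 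Your route is more elementary and entirely self-contained, at the price of invoking the slice lemma for affine submanifolds; the paper's route is slicker but leans on the Poisson correspondence developed earlier. For condition~4 you match it with condition~3 via the description of the conormal space as $\{dg_{\iota(x)}:g\in\mathfrak I(N)\}$, whereas the paper matches it with condition~2 via the characterisation of tangency of a vector field to a closed submanifold by preservation of $\mathfrak I(N)$; these are dual formulations of the same fact.
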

\begin{proof}
	If $\iota:(N,\nabla^N,h^N)\hookrightarrow(M,\nabla,h)$ is a K-V map, then the two vector fields $h$ and $h^N$ are $\iota$-related, this means that for all $x\in N$ 
	\begin{equation*}
	T_{x}\iota\circ h^N_{\#}(x)\circ (T_{x}\iota)^{*}=h_{\#}(\iota(x)).
	\end{equation*}
	Since $T_{x}\iota$ is injective, this shows that $h^N$ is unique. It also shows that $1.$ should be if $(N,\nabla^N,h^N)$ is a K-V submanifold.
	
	Next suppose that $\iota:(N,\nabla^N)\hookrightarrow(M,\nabla)$ is an affine immersion that satisfies $\mathrm{Im}(h_{\#}(i(x)))\subset T_{x}(i)(T_{x}N)$. We claim that there exists a unique smooth bivector field $h^N$ in $N$ such that $h_{\#}(\iota(x))$ factors as:
	\begin{equation*}
	\begin{tikzcd}[sep=huge]
	T^{*}_{\iota(x)}M \arrow[r,"h_{\#}(\iota(x))"] \arrow[d, "(T_{x}\iota)^{*}"']& T_{\iota(x)}M \\ T^{*}_{x}N \arrow[r, "h^N_{\#}(x)"',dashrightarrow]& T_{x}N \arrow[u, "T_{x}\iota"'] \end{tikzcd}
	\end{equation*}
	Since we already know that  $\mathrm{Im}(h_{\#}(\iota(x)))\subset T_{x}(\iota)(T_{x}N)$, it is enough to check that for any $\alpha\in (T_{x}\iota(T_{x}N))^{\circ}$ we have $\alpha^\#=0$. In fact, we find for any $\beta\in T^{*}_{\iota(x)}M$:
	\begin{equation*}
	\langle\beta,\alpha^\#\rangle=\langle\alpha,\beta^\#\rangle=0,
	\end{equation*}
	which proves the claim (the smoothness of $h^N$ is automatic).
	
	Now observe that the skew-symmmetric bivector field $\Pi^N$ on $TN$ associated to $(\nabla^N,h^N)$ and the Poisson bivector field $\Pi$ on $TM$ associated to $(\nabla,h)$ are $T\iota$-related, this implies that the Schouten brackets $[\Pi^N,\Pi^N]$ and $[\Pi,\Pi]$ are also $T\iota$-related hence we have 
	\begin{equation*}
	[\Pi^N,\Pi^N]=0.
	\end{equation*}
	According to Theorem $3.3$ in (\cite{ABB1}) $(N,\nabla^N,h^N)$ is a K-V manifold. This shows that if $1.$ holds, then $N$ has a unique K-V structure $(\nabla^N,h^N)$ such that the immersion $\iota:(N,\nabla^N,h^N)\hookrightarrow(M,\nabla,h)$ is a K-V map.
	
	The equivalence $1. \Leftrightarrow 2.$ follows from the fact that the vector space $\mathrm{Im}(h_{\#}(\iota(x)))=\mathrm{span}\{h_{\#}(\iota(x))(df)| f\in C^{\infty}(M)\}$.
	
	The equivalence $2.\Leftrightarrow 3.$ follows from observing that for any  $\alpha\in (T_{x}\iota(T_{x}N))^{\circ}$ and $\beta\in T^{*}_{\iota(x)}M$ we have $\langle\beta,\alpha^\#\rangle=\langle\alpha,\beta^\#\rangle,
$ ;	so $h_{\#}((T_{x}\iota(T_{x}N))^{\circ})=0$
	if and only if $h_{\#}(\iota(x))(T_{\iota(x)}^{*}M)\subset T_{x}\iota(T_{x}N)$.
	Finally, notice that if $N$ is a closed submanifold, a vector field $X\in\Gamma(TM)$ is tangent to $N$ if and only if for any $f\in\mathfrak{I}(N)$ we have $X(f)\in\mathfrak{I}(N)$. Hence, the result follows from the first part and the fact that $h(df,dg)=X_{g}(f)$.
\end{proof}

Now let $(N,\na)$ be an affine submanifold $(M,\nabla,h)$ and $h^{N}$ be a symmetric bivector field on $N$.
\begin{corollary}\label{Corollary1}
	The following statement are equivalent. 
	\begin{enumerate}
		\item $(N,\nabla^N,h^N)$ is a K-V submanifold.
		\item $(TN, \Pi^N)$ is a Poisson submanifold of $(TM,\Pi)$.
	\end{enumerate}
\end{corollary}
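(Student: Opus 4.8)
The plan is to deduce the corollary directly from Theorem \ref{Theorem 1} together with the tangency characterization of K-V submanifolds established in Proposition \ref{Propostion1}. Note that ``$(TN,\Pi^N)$ is a Poisson submanifold'' is a priori stronger than ``$T\iota$ is a Poisson map'': it requires in addition that $\Pi$ be \emph{tangent} to $T\iota(TN)$, so that $TN$ inherits a Poisson structure by restriction. Thus the only content beyond Theorem \ref{Theorem 1} is to verify this tangency, which I expect to follow from condition $3.$ of Proposition \ref{Propostion1}.

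For the implication $2.\Rightarrow 1.$ the argument is immediate. If $(TN,\Pi^N)$ is a Poisson submanifold of $(TM,\Pi)$, then in particular the inclusion $T\iota:(TN,\Pi^N)\hookrightarrow(TM,\Pi)$ is a Poisson map. Applying the equivalence $(i)\Leftrightarrow(ii)$ of Theorem \ref{Theorem 1} to the affine immersion $\iota:(N,\na^N,h^N)\hookrightarrow(M,\na,h)$, this says exactly that $\iota$ is a K-V map, i.e. that $(N,\na^N,h^N)$ is a K-V submanifold. For $1.\Rightarrow 2.$, Theorem \ref{Theorem 1} already gives that $T\iota$ is a Poisson map, so it remains only to check tangency of $\Pi$ (the induced structure then automatically agrees with $\Pi^N$, by uniqueness of the Poisson structure induced on a Poisson submanifold). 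To do this, fix $u\in T_xN$. Since $N$ is an affine (hence auto-parallel) submanifold, $M$-parallel transport of $T_xN$-vectors along curves in $N$ stays in $TN$, so the Sasaki splitting restricts and $T_u(TN)$ is spanned by the lifts $X^v_u,X^h_u$ with $X\in T_xN$. Pairing the covectors $\al^v,\be^h$ ($\al,\be\in\Om^1(M)$) against these lifts using their defining relations, one finds that the conormal space is
\begin{equation*}
(T_u(TN))^{\circ}=\{\al^v+\be^h \ :\ \al,\be\in(T_x\iota(T_xN))^{\circ}\}.
\end{equation*}

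Applying $\Pi_{\#}$ and using (\ref{eq 12}) then gives
\begin{equation*}
\Pi_{\#}(\al^v+\be^h)=-(\al^{\#})^h+(\be^{\#})^v .
\end{equation*}
Since $\al,\be\in(T_x\iota(T_xN))^{\circ}$, condition $3.$ of Proposition \ref{Propostion1} yields $\al^{\#}=\be^{\#}=0$, whence $\Pi_{\#}(\al^v+\be^h)=0\in T_u(TN)$. Therefore $\Pi$ is tangent to $T\iota(TN)$ at every point, and $(TN,\Pi^N)$ is a Poisson submanifold of $(TM,\Pi)$.

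The main obstacle, and the only step requiring genuine care, is the identification of the conormal space $(T_u(TN))^{\circ}$. One must use that $N$ is affine to guarantee that $T(TN)$ really is the direct sum of the horizontal and vertical lifts of $TN$, and then track how the covector lifts $\al\mapsto\al^v,\al^h$ dualize the vector lifts. Once the conormal is pinned down as above, the tangency of $\Pi$ drops out immediately from (\ref{eq 12}) and Proposition \ref{Propostion1}, so the essential geometric input is entirely concentrated in the tangency condition already packaged in Proposition \ref{Propostion1}.
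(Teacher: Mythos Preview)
Your proof is correct and is precisely the argument the paper leaves implicit: the corollary is stated without proof, as it is meant to follow directly from Proposition \ref{Propostion1} (the tangency characterization $h_{\#}((T_xN)^{\circ})=0$) together with Theorem \ref{Theorem 1} (the equivalence of $\iota$ being K-V and $T\iota$ being Poisson). Your identification of the conormal space $(T_u(TN))^{\circ}$ via the horizontal/vertical splitting, and the subsequent use of (\ref{eq 12}) to reduce $\Pi_{\#}((T_u(TN))^{\circ})=0$ to condition $3.$ of Proposition \ref{Propostion1}, is exactly the expected computation; the only small addition worth making explicit in the direction $2.\Rightarrow 1.$ is that $\Pi^N$ being Poisson forces $h^N$ to be a K-V bivector (Theorem 3.3 of \cite{ABB1}), so that Theorem \ref{Theorem 1} legitimately applies.
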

\begin{example}\label{Example2} $\ $ 
	\begin{enumerate}
		\item Take $M=(\mathbb{R}^{2},\nabla,h^{2})$ endowed with its canonical affine structure and the K-V bivector $h^2=x^{2}\partial_{x}\otimes\partial_{x}+y^{2}\partial_{y}\otimes\partial_{y}$. We consider the following affine immersion $$F:\mathbb{R}\rightarrow\mathbb{R}^{2},x\mapsto (\lambda x,\mu x), (\lambda,\mu)\neq(0,0)$$ where $\mathbb{R}$ is endowed with its canonical affine structure. Then the only way to make $F$ as a K-V map is to take $\lambda=0$ or $\mu=0$ and  $h^{1}=x^{2}\partial_{x}\otimes\partial_{x}$.
		\item Take $M=(\mathbb{R}^{m},\nabla,h^{m})$ and  $N=(\mathbb{R}^{m-k}\times\{0_{k}\},\nabla,h^{m-k})$ endowed with its canonical affine structures and the  K-V bivectors $$h^m=\sum_{i,j=1}^{m}x_{i}x_{j}\partial_{x_{i}}\otimes\partial_{x_{j}}\text{ and }  h^{m-k}=\sum_{i,j=1}^{m-k}x_{i}x_{j}\partial_{x_{i}}\otimes\partial_{x_{j}}.$$ Then $N$ is a K-V submanifold of $M$.
		\item Consider $(\mathbb{R}^n,\na,h)$ endowed with its canonical affine connection and 
		\begin{equation*}
		h=\sum_{i,i=1}^{m}f_{i}(x_{i})\partial_{x_{i}}\otimes\partial_{x_{i}}
		\end{equation*}
		where $f_{i}\in C^{\infty}(\mathbb{R})$. Then $N=\mathbb{R}^{m-k}\times\{0_{k}\}$ as an affine submanifold $(\mathbb{R}^n,\na,h)$ is a K-V submanifold if and only if $f_{i}(0)=0$, for $i=k+1,\ldots,n$.
		\item Let $(M,\na,h)$ be a K-V manifold and $f\in C^{\infty}(M)$ an affine function, i.e. $\na df=0$, such that $X_{f}(g)=0$ for all $g\in C^{\infty}(M)$. Then all the smooth level sets of $f$ are K-V submanifolds. Indeed, since $X_{g}(f)=X_{f}(g)=0$ shows that all K-V Hamiltonian vector fields are tangent to the level sets of $f$. 
		\item Let $(\mathcal{A}^{*},\na,h)$ be the linear K-V manifold defined in \cite{ABB1} and $I\subset (\mathcal{A},.)$ be an ideal. Then $I^{\circ}\subset \mathcal{A}^{*}$ is a K-V submanifold.
		\item Let $(\mathfrak{g},.,r)$ be K-V algebra and $\mathfrak{h}\subset(\mathfrak{g},.)$ be a left symmetric subalgebra such that $\mathrm{Im}(r_{\#})\subset\mathfrak{h}$. Denote by $(G,\na,h)$ the associated K-V manifold to $(\mathfrak{g},.,r)$ and by $(H,\na^{H})$ be the associated affine Lie subgroup to $(\mathfrak{h},.)$. Then $H$ is a K-V submanifold.  
	\end{enumerate}
\end{example}

In the following, we show that a K-V structure can be defined by its affine foliation instead of the K-V bivector.
\begin{theorem}
	Let $(M,\na)$ be an affine manifold, and $\mathcal{F}$ a general foliation such that:
	\begin{enumerate}
		\item Every leaf $L$ of $\mathcal{F}$ is endowed with a pseudo-Hessian structure $(\nabla^{L},g_{L})$ and $(L,\na^{L})$ is an affine submanifold of $(M,\na)$.
		\item If $f\in C^{\infty}(M)$, the vector field $X_{f}$ defined by $X_{f}(x)=$ the gradient vector field of $f|_{L}$ on $(L,\na^{L},g_{L})$ at $x$ is a smooth vector field on $M$ where $L$ is the leaf passing through $x$.
	\end{enumerate}
	Then $(M,\na)$ has unique K-V bivector $h$ whose affine foliation is $\mathcal{F}$. Moreover each leaf $(L,\na^{L},h^{L})$ is a K-V submanifold of $(M,\na,h)$ where $h^{L}=g_{L}^{-1}$.
\end{theorem}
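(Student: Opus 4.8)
The plan is to construct $h$ directly from the leafwise gradients, check that it is a smooth symmetric bivector field, and then verify the contravariant Codazzi equation \eqref{codazzi} by reducing it, pointwise, to the intrinsic equation on each leaf.

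First I would define $h$ by the formula $h_x(\al,\be):=(g_L^{-1})_x(\iota_L^*\al,\iota_L^*\be)$, where $L$ is the leaf through $x$ and $\iota_L:L\hookrightarrow M$ is the inclusion; equivalently, $h_{\#}(df)_x=X_f(x)$ is the $g_L$-gradient of $f|_L$. Since this gradient depends only on $d(f|_L)_x=\iota_L^*(df_x)$, the value $h_x(df_x,dg_x)=dg_x(X_f(x))$ depends only on $df_x$ and $dg_x$, so $h$ is a well-defined field of bilinear forms on $T^*M$; it is symmetric because $dg_x(X_f(x))=g_L(\mathrm{grad}(f|_L),\mathrm{grad}(g|_L))$ is symmetric in $f,g$. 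Smoothness is exactly hypothesis $2$: for local coordinate functions $x_i$ (extended by a bump function, which does not alter $X_{x_i}$ since $X_{x_i}(x)$ only sees $dx_i$) the components $h_{ij}=h(dx_i,dx_j)=X_{x_i}(x_j)$ are smooth. Finally, by construction $\mathrm{Im}(h_{\#}(x))=\mathrm{Im}((g_L^{-1})_{\#})=T_xL$, because $g_L$ is nondegenerate.

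Next I would verify \eqref{codazzi}. The defect $K(\al,\be,\ga):=(\na_{\al^{\#}}h)(\be,\ga)-(\na_{\be^{\#}}h)(\al,\ga)$ is $C^{\infty}$-linear in each argument (because $\na h$ is a tensor and $\al\mapsto\al^{\#}$ is $C^{\infty}$-linear), hence tensorial, so it suffices to show $K_x=0$ for every $x$. Fix $x$ and let $L$ be its leaf. The three observations I would use are: (a) $\al^{\#},\be^{\#}\in T_xL$, so all derivatives in $K_x$ are taken in directions tangent to $L$; (b) along $L$ one has $h(\be,\ga)|_L=h^L(\iota_L^*\be,\iota_L^*\ga)$ with $h^L:=g_L^{-1}$; and (c) since $(L,\na^L)$ is an affine submanifold, $\iota_L^*(\na_X\eta)=\na^L_X(\iota_L^*\eta)$ for $X\in T_xL$ and $\eta\in\Om^1(M)$. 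Combining (a)--(c) yields $(\na_{\al^{\#}}h)(\be,\ga)_x=(\na^L_{(\iota_L^*\al)^{\#_L}}h^L)(\iota_L^*\be,\iota_L^*\ga)$, so $K_x$ equals the contravariant Codazzi defect of $(L,\na^L,h^L)$. Since $g_L$ is pseudo-Hessian, its inverse $h^L$ is a K-V bivector on $L$, whence this defect vanishes; thus $K_x=0$ and $h$ is a K-V bivector on $M$.

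Finally, the affine foliation of $(M,\na,h)$ is tangent to $\mathrm{Im}(h_{\#})$, and we showed $\mathrm{Im}(h_{\#})_x=T_xL$, so its characteristic distribution coincides with $T\F$ and the two foliations agree. For each leaf the inclusion satisfies condition $1$ of Proposition \ref{Propostion1} (as $\mathrm{Im}(h_{\#})\subset TL$), so $L$ is a K-V submanifold, and the factorization $h_{\#}|_{\iota_L(x)}=T_x\iota_L\circ(g_L^{-1})_x\circ(T_x\iota_L)^*$ identifies its induced bivector with $h^L=g_L^{-1}$. Uniqueness is immediate from the construction: any K-V bivector realizing $\F$ together with the prescribed leaf metrics must satisfy $h_{\#}(df)_x=\mathrm{grad}_{g_L}(f|_L)(x)$, which determines $h$ completely. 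The step I expect to be most delicate is the second paragraph, namely justifying the reduction of the ambient Codazzi equation to the intrinsic one leaf by leaf, especially for a possibly singular foliation where leaf dimensions jump; the tensoriality of $K$ is precisely what legitimizes the pointwise argument there, but the identities (b) and (c) still have to be applied carefully on each immersed leaf.
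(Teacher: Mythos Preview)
Your proof is correct and follows the same route as the paper: define $h$ leafwise via $h^L=g_L^{-1}$, obtain smoothness from hypothesis~2, and reduce the ambient contravariant Codazzi equation to the leafwise one using that each $(L,\na^L)$ is an affine submanifold. Your write-up is in fact more complete than the paper's, which omits the uniqueness argument, the identification of the affine foliation with $\mathcal{F}$, and the verification (via Proposition~\ref{Propostion1}) that each leaf is a K-V submanifold with induced bivector $h^L$.
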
 
\begin{proof} We define a symmetric bivector on $M$ by putting for all $\al,\be\in\Om^{1}(M)$ and $x\in M$ 
	\begin{equation*}
	h(\al,\be)(\iota(x))=h^{L}(\iota^{*}\al,\iota^{*}\be)(x)
	\end{equation*}
	where $L$ is the affine leaf passing through $x\in M$, the map $\iota:L\hookrightarrow M$ is the canonical injection. The smoothness of $h$ follow automatically from $2$. From $1$ and $2$ one can deduce that for any one forms $\al,\be,\ga\in\Om^{1}(M)$ we have 
	\begin{eqnarray*}
		\na_{\al^{\#}}(h)\left(\be,\ga\right)(\iota(x))
		&=& \na^{L}_{(\iota^{*}\al)^{\#_{L}}}(h^{L})\left(\iota^{*}\be,\iota^{*}\ga\right)(x)\\
		&=& \na^{L}_{(\iota^{*}\be)^{\#_{L}}}(h^{L})\left(\iota^{*}\al,\iota^{*}\ga\right)(x)\\
		&=& \na_{\be^{\#}}(h)\left(\al,\ga\right)(\iota(x)).
	\end{eqnarray*}
\end{proof}

Now we look at the relation between the notion of K-V submanifolds and the affine foliation.
\begin{proposition}
	Let $(M,\na,h)$ be a K-V manifold with affine foliation $\mathcal{F}$. An affine submanifold $(N,\na^{N})\subset (M,\na)$ is a K-V submanifold if and only if for each leaf $L\in\mathcal{L}$ the intersection $L\cap N$ is an open subset of $L$. Hence, the affine foliation of $(N,\na^{N},h^{N})$ consists of the connected components of the intersection $L\cap N$.
\end{proposition}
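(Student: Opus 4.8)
The plan is to translate the statement entirely into the tangency condition of Proposition \ref{Propostion1}, using the standard fact (recalled in the preliminaries for the Lie algebroid $(T^{*}M,h_{\#},[\;,\;]_h)$) that the tangent space of the affine leaf through a point coincides with the image of the anchor, $T_{\iota(x)}L=\mathrm{Im}(h_{\#}(\iota(x)))$. With this identification the assertion to be proved becomes: the inclusion $\mathrm{Im}(h_{\#}(\iota(x)))\subset T_x\iota(T_xN)$ holds for every $x\in N$, which is condition $1.$ of Proposition \ref{Propostion1}, if and only if $L\cap N$ is open in $L$ for every leaf $L$ of $\mathcal{F}$.

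First I would dispatch the easy implication, that openness implies the K-V submanifold property. Fix $x\in N$ and let $L$ be the leaf of $\mathcal{F}$ through $\iota(x)$. If $L\cap N$ is open in $L$, then near $\iota(x)$ it is a submanifold of $M$ of the same dimension as $L$, so $T_{\iota(x)}(L\cap N)=T_{\iota(x)}L=\mathrm{Im}(h_{\#}(\iota(x)))$; on the other hand $L\cap N\subset N$ forces $T_{\iota(x)}(L\cap N)\subset T_x\iota(T_xN)$. Combining the two inclusions yields condition $1.$ of Proposition \ref{Propostion1}, hence $(N,\na^N,h^N)$ is a K-V submanifold.

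The reverse implication is the main obstacle, and a warning is in order: tangency of $L$ to $N$ along $L\cap N$ at the level of tangent spaces alone does \emph{not} force $L\cap N$ to be open in $L$ --- a parabola is tangent to a line yet meets it in a single point. The correct tool is the affine foliation $\mathcal{F}^N$ of $N$, which exists precisely because $N$ is now a K-V manifold. Starting from the factorization $h_{\#}(\iota(x))=T_x\iota\circ h^N_{\#}(x)\circ(T_x\iota)^{*}$ of Proposition \ref{Propostion1} and the surjectivity of $(T_x\iota)^{*}$ (the dual of the injective $T_x\iota$), I would establish the identity $T_x\iota(\mathrm{Im}(h^N_{\#}(x)))=\mathrm{Im}(h_{\#}(\iota(x)))$. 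Consequently, for every leaf $L^N$ of $\mathcal{F}^N$ the image $\iota(L^N)$ is a connected integral submanifold of the singular distribution $x\mapsto\mathrm{Im}(h_{\#}(x))$ whose tangent space at each of its points coincides with that of the ambient leaf $L$ through that point; being connected and everywhere of full leaf dimension inside $L$, it is an open subset of $L$ contained in $N$. Since the sets $\iota(L^N)$, as $L^N$ ranges over the leaves of $\mathcal{F}^N$ meeting $L\cap N$, cover $L\cap N$ by open pieces of $L$, the intersection $L\cap N$ is open in $L$.

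Finally, the \emph{moreover} statement reads off from the same identification $T_x\iota(\mathrm{Im}(h^N_{\#}(x)))=\mathrm{Im}(h_{\#}(\iota(x)))$: each leaf of $\mathcal{F}^N$ is carried by $\iota$ onto a connected open subset of a leaf $L$ of $\mathcal{F}$ lying inside $L\cap N$, and since $\iota$ is injective and distinct leaves are either disjoint or equal, these images are pairwise disjoint open connected subsets covering $L\cap N$; by connectedness they are exactly its connected components. I expect the integral-manifold argument of the third paragraph --- replacing the naive pointwise tangent-space comparison by the use of the foliation $\mathcal{F}^N$ --- to be the crux of the proof.
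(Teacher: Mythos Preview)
Your proposal is correct and follows essentially the same route as the paper: both directions are reduced to condition~$1.$ of Proposition~\ref{Propostion1}, and for the nontrivial implication you, like the paper, pass through the affine foliation $\mathcal{F}^N$ of $N$ and use that its leaves are integral submanifolds of the ambient distribution of full leaf dimension, hence open in the ambient leaf. Your write-up is in fact more careful than the paper's, which is rather terse: you make explicit the key identity $T_x\iota(\mathrm{Im}(h^N_{\#}(x)))=\mathrm{Im}(h_{\#}(\iota(x)))$ and the covering argument that passes from ``each $\iota(L^N)$ is open in $L$'' to ``$L\cap N$ is open in $L$'', and your parabola remark correctly flags why a purely pointwise tangent-space comparison would be insufficient.
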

\begin{proof}
	An affine submanifold $(N,\na^{N})\subset (M,\na)$ is a K-V submanifold if and only if 
	\begin{equation*}
	\mathrm{Im}(h_{\#}(x))\subset T_{x}N,\text{ }\forall x\in N.
	\end{equation*}
	It follows that for a K-V submanifold $N\subset M$, every affine leaf of $(N,\na^{N},h^{N})$ is also an integral submanifold of $(M,\na,h)$. Hence, every affine leaf of $(N,\na^{N},h^{N})$ is an open subset of an affine leaf of $\mathcal{F}$.
	
	Conversely, if for each affine leaf $L\in \mathcal{F}$ the intersection $L\cap N$ is an open subset of $L$. Then for any $x\in N$ we have $\mathrm{Im}(h_{\#}(x))=T_{x}L\subset T_{x}N$, where $L\in \mathcal{F}$ is the affine leaf passing through $x$. This shows that $N$ is a K-V submanifold. 
\end{proof}
\begin{remark}
	Let $(M,\nabla,g)$ be a pseudo-Hessian manifold. Then the only K-V submanifolds are the open subsets of $M$.
\end{remark}

Finally, we give the relation between K-V maps and the affine foliation.
\begin{proposition}
	If $F:(M^{1},\na^{1},h^{1})\rightarrow (M^{2},\na^{2},h^{2})$ is K-V map then for each pseudo-Hessian leaf $L$ of $(M^{2},\na^{2},h^{2})$, the set $L\cap\mathrm{Im}(F)$ is open in $L$. In particular, if $\mathrm{Im}(F)$ is an affine submanifold, then it is a K-V submanifold of $(M^{2},\na^{2},h^{2})$. 
\end{proposition}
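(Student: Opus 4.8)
The plan is to exploit the flow-theoretic reformulation of the K-V map condition provided by Theorem \ref{Theorem 1}(iv), together with the description of a pseudo-Hessian leaf as the accessibility set of the K-V Hamiltonian vector fields. Recall from Section 1 that, for the Lie algebroid $(T^{*}M^{2},h^{2}_{\#},[\;,\;]_{h^{2}})$, the tangent space to the leaf $L$ of the affine foliation through a point $y$ is $T_{y}L=\mathrm{Im}(h^{2}_{\#}(y))$, and that this image is spanned by the values $X_{g}(y)=(dg)^{\#_{2}}(y)$, $g\in C^{\infty}(M^{2})$; moreover $L$ is swept out from $y$ by successive flows of these vector fields, which are tangent to $L$ and span its tangent space at every point.

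First I would record the key intertwining property. By Theorem \ref{Theorem 1}, $F$ being a K-V map is equivalent to condition (iv): for every $g\in C^{\infty}(M^{2})$ the Hamiltonian vector fields $X_{g\circ F}\in\Ga(TM^{1})$ and $X_{g}\in\Ga(TM^{2})$ are $F$-related. Since $F$-related vector fields have $F$-intertwined flows, writing $\phi^{g}_{t}$ for the local flow of $X_{g\circ F}$ on $M^{1}$ and $\psi^{g}_{t}$ for the local flow of $X_{g}$ on $M^{2}$, one gets
\[ \psi^{g}_{t}\circ F = F\circ \phi^{g}_{t} \]
wherever both sides are defined.

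Next I would fix a point $y=F(x)\in L\cap\mathrm{Im}(F)$ and set $k=\dim L$. Choosing $g_{1},\dots,g_{k}\in C^{\infty}(M^{2})$ so that $X_{g_{1}}(y),\dots,X_{g_{k}}(y)$ form a basis of $T_{y}L=\mathrm{Im}(h^{2}_{\#}(y))$, the map
\[ \Phi(t_{1},\dots,t_{k}) = \psi^{g_{1}}_{t_{1}}\circ\cdots\circ\psi^{g_{k}}_{t_{k}}(y), \]
defined for $(t_{1},\dots,t_{k})$ near $0$, takes values in $L$ and has $d\Phi_{0}$ an isomorphism onto $T_{y}L$; hence by the inverse function theorem it restricts to a diffeomorphism from a neighborhood of $0$ onto an open neighborhood $U$ of $y$ in $L$. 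Applying the intertwining identity repeatedly gives
\[ \Phi(t_{1},\dots,t_{k}) = F\bigl(\phi^{g_{1}}_{t_{1}}\circ\cdots\circ\phi^{g_{k}}_{t_{k}}(x)\bigr)\in\mathrm{Im}(F), \]
so $U\subset L\cap\mathrm{Im}(F)$. As $y$ was arbitrary, $L\cap\mathrm{Im}(F)$ is open in $L$. For the final assertion, if $N=\mathrm{Im}(F)$ is an affine submanifold, then for $y\in N$ on the leaf $L$ the openness of $L\cap N$ in $L$ yields $T_{y}L=T_{y}(L\cap N)\subset T_{y}N$, that is $\mathrm{Im}(h^{2}_{\#}(y))\subset T_{y}N$ for every $y\in N$; by condition $1.$ of Proposition \ref{Propostion1} this is exactly the criterion making $N$ a K-V submanifold.

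I expect the main obstacle to be the careful handling of the possibly incomplete local flows: one must ensure the intertwining $\psi^{g}_{t}\circ F=F\circ\phi^{g}_{t}$ holds on a common domain containing a neighborhood of $(0,x)$, and that the accessibility chart $\Phi$ is genuinely a local diffeomorphism onto an open piece of the immersed leaf $L$. Both are purely local, so small-time flows suffice; the substantive input is the identification $T_{y}L=\mathrm{Im}(h^{2}_{\#}(y))$ and the fact that the $X_{g}$ span this space, which come from the Lie-algebroid description of the affine foliation recalled in Section 1.
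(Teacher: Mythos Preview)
Your proposal is correct and follows essentially the same approach as the paper: both arguments use Theorem \ref{Theorem 1}(iv) to obtain $F$-relatedness of the Hamiltonian vector fields $X_{g}$ and $X_{g\circ F}$, translate this into the intertwining of their flows, and conclude that a neighborhood of $y=F(x)$ in the leaf $L$ lies in $\mathrm{Im}(F)$. The only cosmetic difference is that the paper phrases the leaf description in terms of piecewise integral curves of the $X_{g}$, whereas you build an explicit local chart $\Phi(t_{1},\dots,t_{k})$ by composing flows; your version is slightly more explicit and also spells out the ``in particular'' clause via Proposition \ref{Propostion1}, which the paper leaves implicit.
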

\begin{proof}
	Let $x\in M^{1}$ and set $y=F(x)$. Denote by $L$ the affine leaf of $(M^{2},\na^{2},h^{2})$ containing $y$. Any point in $L$ can be reached from $y$ by piecewise smooth curves consisting of integral curves of K-V Hamiltonian vector field $X_{f}$.
	
	Given $f\in C^{\infty}(M)$, by Theorem \ref{Theorem 1}, the vector fields $X_{f}$ and $X_{f\circ F}$ are $F$-related. Hence, if $\ga_{2}(t)\in M^{2}$ and $\ga_{1}(t)\in M^{1}$ are the integral curves of $X_{f}$ and $X_{f\circ F}$ satisfying $\ga_{2}(0)=y$ and $\ga_{1}(0)=x$, we have: $\ga_{2}(t)=F(\ga_{1}(t))$, for all small enough $t$. It follows that a neighborhood of $y$ in $L$ is contained in the image of $F$.
	
\end{proof}
\begin{corollary}\label{Corollary3}
	Let $(M,\nabla,h)$ be a K-V manifold. If $N^{1},N^{2}\subset M$ are two K-V submanifolds which intersect transversely then $N^{1}\cap N^{2}\subset M$ is also a K-V submanifold.
\end{corollary}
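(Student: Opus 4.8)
The plan is to reduce the transversal-intersection statement to the pointwise characterization of Koszul-Vinberg submanifolds furnished by condition $1.$ of Proposition~\ref{Propostion1}, namely that an affine submanifold $N\subset M$ is a K-V submanifold if and only if $\mathrm{Im}(h_{\#}(x))\subset T_xN$ for all $x\in N$. First I would record that transversality of $N^1$ and $N^2$ means $T_xN^1+T_xN^2=T_xM$ for every $x\in N^1\cap N^2$; this is precisely the hypothesis that guarantees $N^1\cap N^2$ is again an (immersed) submanifold, with tangent space $T_x(N^1\cap N^2)=T_xN^1\cap T_xN^2$ at each such point. I would also note that $N^1\cap N^2$ inherits an affine structure as an affine submanifold of $(M,\na)$: since each $N^i$ is an affine submanifold, the connection $\na$ restricts appropriately, and the intersection being a submanifold with the induced tangent spaces is affine as well.

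The heart of the argument is then the pointwise inclusion. Fix $x\in N^1\cap N^2$. Because $N^1$ is a K-V submanifold, condition $1.$ gives $\mathrm{Im}(h_{\#}(x))\subset T_xN^1$; because $N^2$ is a K-V submanifold, the same condition gives $\mathrm{Im}(h_{\#}(x))\subset T_xN^2$. Intersecting these two inclusions yields
\begin{equation*}
\mathrm{Im}(h_{\#}(x))\subset T_xN^1\cap T_xN^2=T_x(N^1\cap N^2).
\end{equation*}
Thus the defining condition $1.$ of Proposition~\ref{Propostion1} holds for the affine submanifold $N^1\cap N^2$, and so it carries a unique K-V bivector field $h^{N^1\cap N^2}$ making the inclusion a K-V map; hence it is a K-V submanifold. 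It is worth emphasizing that the transversality hypothesis plays no role in the pointwise inclusion itself, which would hold for the set-theoretic intersection regardless.

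The main obstacle is therefore not the algebraic inclusion but the \emph{smoothness and submanifold structure} of $N^1\cap N^2$: without some regularity hypothesis the intersection of two submanifolds need not be a manifold at all. This is exactly what transversality secures --- it is the standard transversality theorem that ensures $N^1\cap N^2$ is a submanifold with $T_x(N^1\cap N^2)=T_xN^1\cap T_xN^2$, which is the identity I used above. I would make this the explicit first step, invoking transversality to establish both that $N^1\cap N^2$ is an affine submanifold of $(M,\na)$ and that its tangent spaces are the intersections, and only afterwards apply the purely pointwise reasoning via Proposition~\ref{Propostion1}. Care is needed when the submanifolds are merely immersed rather than embedded, but since the statement concerns $N^1,N^2\subset M$ as submanifolds, the transversality condition delivers the required structure on the intersection and the argument closes.
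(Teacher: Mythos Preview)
Your proposal is correct and follows essentially the same route as the paper: use transversality to ensure $N^1\cap N^2$ is a submanifold with $T_x(N^1\cap N^2)=T_xN^1\cap T_xN^2$, check that it is an affine submanifold, and then apply condition~$1.$ of Proposition~\ref{Propostion1} pointwise via the intersection of the two inclusions $\mathrm{Im}(h_\#(x))\subset T_xN^i$. The only place where the paper is more explicit than you is the affinity of $N^1\cap N^2$: rather than asserting that the connection ``restricts appropriately,'' the paper argues directly that parallel transport along any curve in $N^1\cap N^2$ preserves each $TN^i$ (since each $N^i$ is affine) and hence preserves $TN^1\cap TN^2=T(N^1\cap N^2)$, which is exactly the affine-submanifold condition.
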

\begin{proof} 
	Let $\gamma:I\rightarrow N^{1}\cap N^{2}$ be a curve with $\gamma(0)=x$ and $\gamma(1)=y$. We take $u\in T_{x}(N^{1}\cap N^{2})=T_{x}N^{1}\cap T_{x}N^{2}$. Since $N^{1}$ and $N^{2}$ are affine submanifolds of $M$ then $\tau^{\gamma}(u)\in T_{y}N^{1}\cap T_{y}N^{2}=T_{y}(N^{1}\cap N^{2})$ where $\tau^{\gamma}:T_{x}M\rightarrow T_{y}M$ is the parallel transport along $\gamma$. This show that $N^{1}\cap N^{2}$ is an affine submanifold of $M$. Applying the first assertion of the Proposition \ref{Propostion1} to $N^{1}$ and $N^{2}$, we get that for all $x\in N^{1}\cap N^{2}$, $\mathrm{Im}(h_{\#}(x))\subset T_{x}N^{1}\cap T_{x}N^{2}=T_{x}(N^{1}\cap N^{2})$. Hence $N^{1}\cap N^{2}$ is a K-V submanifold of $(M,\nabla,h)$.
\end{proof}

In general, K-V submanifolds don't have functoriality under K-V maps which are explained by the following example.
\begin{example}\label{Example4}
	Consider the K-V map
	\begin{eqnarray*}
		&& F:(\mathbb{R}^{3},\nabla,h^{1})\rightarrow(\mathbb{R}^{2},\nabla,h^{2})\\
		& & \qquad (x,y,z)\longmapsto (x+y-\sqrt{2}z,x+y-\sqrt{2}z)
	\end{eqnarray*}	
	where $\nabla$ is the canonical affine structure  of $\mathbb{R}^{n}$,  $$h^{1}=\partial_{x}\otimes\partial_{x}+\partial_{y}\otimes\partial_{y}-\partial_{z}\otimes\partial_{z}\text{ and }h^{2}=0.$$The affine submanifold $N:=\{(x,-x)|\text{ } x\in\mathbb{R}\}\subset\mathbb{R}^{2}$ is a transversal to the map $F$, moreover we have  $F^{-1}(N)=\{(x,y,z)|\text{ }x+y-\sqrt{2}z=0\}\subset\mathbb{R}^{3}$ is an hyperplane hence  $F^{-1}(N)$ cannot be a K-V submanifold of $\mathbb{R}^{3}$, since the only one are the open subset of $\mathbb{R}^{3}$.
\end{example}

\section{Koszul-Vinberg transversals}\label{sec4}

We recall from (\cite{Fernandes}) that a Poisson transversal of a Poisson manifold $(P,\pi)$ is a submanifold $N\subset P$ such that, at every point $x\in N$, we have 
\begin{equation*}
T_{x}P=T_{x}N+\pi_{\#}(T_{x}N^{\circ}).
\end{equation*} 

\begin{definition}\label{definition1}
	A {\textit{Koszul-Vinberg transversal}} of a K-V manifold $(M,\nabla,h)$ is an affine submanifold $N\subset M$ such that, at every point $x\in N$, we have 
	\begin{equation}\dreqno\label{eq7}
	T_{x}M=T_{x}N+h_{\#}(T_{x}N^{\circ}).
	\end{equation} 
\end{definition}

Note that the equality $\mathrm{rank}(TN^{\circ})=\mathrm{rank}(T_{N}M)-\mathrm{rank}(TN)$ implies that condition (\ref{eq7}) is equivalent to the direct sum decomposition:
\begin{equation}\label{eq8}
T_{N}M=TN\oplus h_{\#}(TN^{\circ}).
\end{equation}

The main reason to consider K-V transversals is that they have naturally induced K-V structures. Indeed, let $N$ be a K-V tranversal of $(M,\nabla,h)$. Then the decomposition for $T_{N}M$ and the dual decomposition for $T^{*}_{N}M$ gives a sequence of bundle maps:
\begin{equation*}
T^{*}N\stackrel{p^{*}}{\longrightarrow} T^{*}_{N}M\stackrel{h_{\#}}{\longrightarrow}T_{N}M\stackrel{p}{\longrightarrow}TN.
\end{equation*}
The resulting  bundle map $T^{*}N\rightarrow TN$ is symmetric and so it is of the form $h^{N}_{\#}$ for a unique symmetric bivector field $h^{N}$ on $N$. Hence we get that:
\begin{proposition}\label{Proposition3} $(N,\nabla,h^{N})$ is a K-V manifold.
\end{proposition}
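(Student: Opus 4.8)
The plan is to exploit the machinery already developed for K-V transversals together with Proposition \ref{Propostion1} and Corollary \ref{Corollary1}. We have the decomposition $T_NM = TN \oplus h_{\#}(TN^{\circ})$ from (\ref{eq8}), and the induced bundle map $h^N_{\#}:T^*N \to TN$ obtained as the composite $p \circ h_{\#} \circ p^*$ along the sequence $T^*N \stackrel{p^*}{\to} T^*_NM \stackrel{h_{\#}}{\to} T_NM \stackrel{p}{\to} TN$. First I would record that $h^N$ is genuinely symmetric (so that the notation $h^N_{\#}$ is legitimate), which follows from the symmetry of $h_{\#}$ together with the fact that the projection $p:T_NM \to TN$ is, under the dual decomposition, the transpose of the inclusion $p^*:T^*N \to T^*_NM$. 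With $(N,\nabla)$ an affine submanifold of $(M,\nabla)$ by hypothesis, what remains is to verify the contravariant Codazzi equation (\ref{codazzi}) for $(N,\nabla,h^N)$.

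The cleanest route, which I would take, is to translate the problem into the Poisson picture on the tangent bundles and invoke Corollary \ref{Corollary1} in the converse direction: it suffices to show that $(TN,\Pi^N)$ sits inside $(TM,\Pi)$ as a \emph{Poisson transversal}, after which the induced Poisson structure is automatically the one determined by $(\nabla,h^N)$, and hence $(N,\nabla,h^N)$ is a K-V manifold by Theorem $3.3$ of \cite{ABB1}. Concretely, from the splitting (\ref{eq8}) on $M$ I would build the corresponding splitting of $T(TM)$ along $TN$ using the vertical and horizontal lifts, together with formulas (\ref{eq 11})--(\ref{eq 12}) which relate $\Pi_{\#}$ on $TM$ to $h_{\#}$ on $M$ via $\Pi_{\#}(\alpha^v)=-(\alpha^{\#})^h$ and $\Pi_{\#}(\alpha^h)=(\alpha^{\#})^v$. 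The transversality condition $T_xM=T_xN+h_{\#}(T_xN^{\circ})$ lifts, through these identities, to the Poisson transversality condition $T_u(TM)=T_u(TN)+\Pi_{\#}((T_u(TN))^{\circ})$ at each $u\in TN$, because the annihilator of $T(TN)$ is spanned by the lifts of the annihilator of $TN$.

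The key technical point I expect to be the main obstacle is showing that the bivector field $\Pi^N$ built from $(\nabla,h^N)$ via (\ref{eq 11}) on $TN$ coincides with the Poisson structure that the general Poisson-transversal construction induces on $TN \hookrightarrow TM$. Equivalently, one must check that the two ways of projecting---first project $h_{\#}$ from $M$ to $N$ and then lift to $TN$, versus first lift $h_{\#}$ to $\Pi$ on $TM$ and then take the Poisson-transversal projection onto $TN$---give the same answer. This is essentially a compatibility between the projection $p:T_NM\to TN$ and the vertical/horizontal lift operations, and it rests on the fact that the splitting (\ref{eq8}) is preserved under parallel transport (since both $TN$ and, by the affine structure, the complement behave well with respect to $\nabla$). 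Once this identification is in place, the Schouten bracket $[\Pi^N,\Pi^N]=0$ is inherited from $[\Pi,\Pi]=0$ exactly as in the proof of Proposition \ref{Propostion1}, and the proposition follows.

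Alternatively, if one prefers to stay on $M$ and avoid the tangent-bundle detour, I would verify (\ref{codazzi}) directly: for $\alpha,\beta,\gamma\in\Omega^1(N)$ extend them to one-forms on $M$ and use that $(h^N)_{\#}\alpha=p(h_{\#}\tilde\alpha)$ for any extension $\tilde\alpha$ restricting to $\alpha$, where the ambiguity in the extension lands in $h_{\#}(TN^{\circ})$ and is killed by the projection $p$. The Codazzi identity for $h^N$ then reduces to that for $h$ on $M$ modulo terms in $h_{\#}(TN^{\circ})$, and the decomposition (\ref{eq8}) guarantees these correction terms are annihilated; the affineness of the submanifold ensures $\nabla^N$ is the restriction of $\nabla$, so the covariant derivatives match up. I would present the tangent-bundle argument as the primary proof since it leverages the already-established equivalence in Corollary \ref{Corollary1} and keeps the verification of the Codazzi equation implicit.
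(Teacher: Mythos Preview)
Your proposal is correct and follows essentially the same route as the paper: the paper first proves (as its Lemma \ref{Lemma8}) that $TN$ is a Poisson transversal of $(TM,\Pi)$ via the vertical/horizontal lift identities, then observes that the bivector $\Pi^{N}$ built from $(\nabla,h^{N})$ coincides with the bundle map $T^{*}(TN)\stackrel{Tp^{*}}{\to} T^{*}_{TN}(TM)\stackrel{\Pi_{\#}}{\to}T_{TN}(TM)\stackrel{Tp}{\to}T(TN)$ induced by the Poisson-transversal construction, and concludes by citing the Poisson-transversal result (Proposition~2.13 in \cite{Fernandes}) together with Theorem~3.3 of \cite{ABB1}. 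The only minor slip is your reference to Corollary~\ref{Corollary1}, which concerns K-V \emph{submanifolds} rather than transversals; the relevant input is simply Theorem~3.3 of \cite{ABB1}, as you in fact use.
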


To show this proposition we need the following lemma:
\begin{lemma}\label{Lemma8}
	Let $N$ be a K-V tranversal of $(M,\nabla,h)$. Then $TN$ is a Poisson transversal submanifold of $(TM,\Pi)$. 
\end{lemma}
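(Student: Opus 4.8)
The plan is to prove Lemma \ref{Lemma8} by translating the defining transversality condition for $N\subset M$ into the corresponding transversality condition for $TN\subset TM$ with respect to the Poisson structure $\Pi$. The key facts I would rely on are the identifications from Lemma \ref{Lemma1} and equations (\ref{eq 11})--(\ref{eq 12}): the tangent bundle $TTM$ splits as $V(M)\oplus H(M)$, the Poisson tensor $\Pi$ is determined by $\Pi_{\#}(\alpha^{v})=-(\alpha^{\#})^{h}$ and $\Pi_{\#}(\alpha^{h})=(\alpha^{\#})^{v}$, and vertical/horizontal lifts of tangent and cotangent vectors to $N$ land inside the corresponding lifts for $M$. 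The goal is to verify, at every point $u\in TN$ lying over $x\in N$, the Poisson-transversal identity
\begin{equation*}
T_{u}(TM)=T_{u}(TN)+\Pi_{\#}\bigl((T_{u}(TN))^{\circ}\bigr).
\end{equation*}

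First I would describe both sides in terms of the splitting. Over a point $x\in N$, one has $T_{u}(TN)=V_{u}(N)\oplus H_{u}(N)$, where $V_{u}(N)$ and $H_{u}(N)$ consist of the vertical and horizontal lifts of vectors of $T_xN$; similarly $T_{u}(TM)=V_{u}(M)\oplus H_{u}(M)$. The annihilator $(T_u(TN))^{\circ}$ decomposes compatibly: a covector annihilating $T_u(TN)$ must annihilate both the vertical and the horizontal lifts of $T_xN$, and using the duality between $\alpha^{v},\alpha^{h}$ and $X^{v},X^{h}$ one checks that $(T_u(TN))^{\circ}$ is spanned by the vertical and horizontal lifts $\alpha^{v},\alpha^{h}$ of covectors $\alpha\in (T_x\iota(T_xN))^{\circ}\subset T_{x}^{*}M$. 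Then formula (\ref{eq 12}) turns applying $\Pi_{\#}$ to these lifts into horizontal and vertical lifts of $\alpha^{\#}$; concretely $\Pi_{\#}(\alpha^{v})=-(\alpha^{\#})^{h}$ and $\Pi_{\#}(\alpha^{h})=(\alpha^{\#})^{v}$ for $\alpha\in(T_x\iota(T_xN))^{\circ}$.

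Next I would feed in the K-V transversality hypothesis (\ref{eq7}), namely $T_xM=T_xN+h_{\#}((T_x\iota(T_xN))^{\circ})$. Lifting this decomposition horizontally gives $H_{u}(M)=H_{u}(N)+\{(\alpha^{\#})^{h}\}$ and vertically gives $V_{u}(M)=V_{u}(N)+\{(\alpha^{\#})^{v}\}$, where $\alpha$ ranges over $(T_x\iota(T_xN))^{\circ}$. Combining these with the previous paragraph, the horizontal lifts $(\alpha^{\#})^{h}$ are supplied by $\Pi_{\#}(\alpha^{v})$ and the vertical lifts $(\alpha^{\#})^{v}$ by $\Pi_{\#}(\alpha^{h})$, both of which lie in $\Pi_{\#}((T_u(TN))^{\circ})$. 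Hence $T_u(TM)=V_u(M)\oplus H_u(M)$ is the sum of $T_u(TN)=V_u(N)\oplus H_u(N)$ and $\Pi_{\#}((T_u(TN))^{\circ})$, which is exactly the Poisson-transversal condition.

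The main obstacle I anticipate is bookkeeping with the annihilator and the two-to-one role of the lifts: one must be careful that $(T_u(TN))^{\circ}$ really is spanned by both the vertical and horizontal lifts of $(T_x\iota(T_xN))^{\circ}$ and contains nothing extra, so that $\Pi_{\#}$ produces \emph{both} the horizontal and the vertical copies of $h_{\#}((T_x\iota(T_xN))^{\circ})$. Once that duality is pinned down cleanly using the pairings defining $\alpha^{v},\alpha^{h}$, the transversality condition for $M$ splits neatly into its horizontal and vertical halves and the rest is a direct assembly. No dimension count beyond the one already recorded in (\ref{eq8}) is needed, though it can be used as a consistency check that the sum is in fact direct.
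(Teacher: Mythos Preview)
Your proposal is correct and follows essentially the same approach as the paper: both arguments identify $(T_u(TN))^{\circ}$ with the vertical and horizontal lifts of $(T_xN)^{\circ}$, apply the formulas $\Pi_{\#}(\alpha^{v})=-(\alpha^{\#})^{h}$ and $\Pi_{\#}(\alpha^{h})=(\alpha^{\#})^{v}$, and then lift the decomposition $T_xM=T_xN+h_{\#}((T_xN)^{\circ})$ separately in the vertical and horizontal directions. The paper phrases this element-wise (decomposing a given $X_x$ as $Y_x+\gamma_x^{\#}$ and lifting), while you phrase it structurally via the splitting, but the content is the same.
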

\begin{proof}
	Let $x\in M$, $\alpha,\beta\in\Gamma(T^{*}M)$ and $X\in\Gamma(TM)$ such that $\alpha_{x}\in T_{x}N^{\circ}$. Let $u\in T_{x}N$. For all $Z\in\Gamma(TN)$ 
	\begin{equation*}
	\prec (\alpha^{h})_{u},(Z^{h})_{u}\succ=\prec \alpha_{x},Z_{x}\succ=0,
	\end{equation*}
	and 
	\begin{equation*}
	\prec (\alpha^{v})_{u},(Z^{v})_{u}\succ=\prec \alpha_{x},Z_{x}\succ=0
	,\end{equation*}
	hence $(\alpha^{h})_{u},(\alpha^{v})_{u}\in T_{u}(TN)^{\circ}$. On the other hand there exist $Y\in\Gamma(TN)$ and $\gamma\in\Gamma(T^{*}M)$ such that $\gamma_{x}\in T_{x}N^{\circ}$ and $X_{x}=Y_{x}+\gamma_{x}^{\#}$. Hence we have 
	\begin{eqnarray*}
		\prec (\beta^{v})_{u},(X^{v})_{u}\succ &=& \prec \beta_{x},X_{x}\succ\\
		&=& \prec \beta_{x},Y_{x}+\gamma_{x}^{\#}\succ\\
		&=& \prec (\beta)^{v}_{u},(Y^{v})_{u}+(\gamma_{x}^{\#})_{u}^{v}\succ\\
		&=& \prec (\beta)^{v}_{u},(Y^{v})_{u}+\Pi_{\#}(\gamma^{h}_{u})\succ.
	\end{eqnarray*}
	This implies that $(X^{v})_{u}=(Y^{v})_{u}+\Pi_{\#}(\gamma^{h}_{u})$. Similarly we get that $(X^{h})_{u}=(Y^{h})_{u}-\Pi_{\#}(\gamma^{v}_{u})$. Hence the equality $T_{u}(TM)=T_{u}(TN)+\Pi_{\#}(T_{u}(TN)^{\circ})$ follows from the fact that $T_{u}(TM)=\mathrm{span}\{(X^{v})_{u},(X^{h})_{u}| X\in\Gamma(TM)\}$. 
\end{proof}
\begin{proof}[Proof of Proposition \ref{Proposition3}]
	Let $\Pi^{N}$ be the skew-symmetric bivector field associated to the pair $(\nabla,h^{N})$, one can see also that $\Pi^{N}$ coincide with the bundle map given by the following composition maps
	\begin{equation*}
	T^{*}(TN)\stackrel{Tp^{*}}{\longrightarrow} T^{*}_{TN}(TM)\stackrel{\Pi_{\#}}{\longrightarrow}T_{TN}(TM)\stackrel{Tp}{\longrightarrow}T(TN).
	\end{equation*}
	According to the Lemma \ref{Lemma8} and Proposition $2.13$ in \cite{Fernandes}, $\Pi^{N}$ is a Poisson teansor field on $TN$, hence we get that $(\nabla,h^{N})$ is a K-V structure on $N$.  .
\end{proof}

It is important to note that for a K-V transversal $N$ in $(M,\nabla,h)$, with induced K-V structure $h^{N}$, the inclusion map $\iota:(N,\nabla,h^{N})\hookrightarrow(M,\nabla,h)$ is not a K-V map (Unless is an open set in $M$). This will be clear in the next example.
\begin{example}\label{Example5}
	Take $M=(\mathbb{R}^{3},\nabla,h)$ endowed with its canonical affine structure and $h=x\partial_{x}\otimes\partial_{x}+y\partial_{y}\otimes\partial_{y}$.
	Clearly $(M,\nabla,h)$ is a K-V structure and $N:=\{(0,0,z)|\text{ }z\in\mathbb{R}\}\subset (M,\nabla,h)$ is a K-V transversal, however the induced K-V bivector field on $N$ vanish identically.
\end{example}

K-V transversals behave functorially under pullbacks by K-V maps. This turns out to be a very useful property:
\begin{proposition}\label{Proposition4}
	Let $F:(M^{1},\nabla^{1},h^{1})\rightarrow (M^{2},\nabla^{2},h^{2})$ be a K-V map and let $N^{2}\subset M^{2}$ be a K-V transversal. Then $F$ is transverse to $N^{2}$ and $N^{1}:=F^{-1}(N^{2})$ is a K-V transversal in $M^{1}$. Moreover, $F$ restricts to a K-V map between the induced K-V structure on $N^{1}$ and $N^{2}$.
\end{proposition}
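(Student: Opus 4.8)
The plan is to push every assertion through the tangent-bundle dictionary of Theorem \ref{Theorem 1} and Lemma \ref{Lemma8}, using the pointwise form of the K-V condition to control $F$ at the base level. I first establish that $F$ is transverse to $N^{2}$. Fix $x\in N^{1}:=F^{-1}(N^{2})$ and put $y=F(x)$. Since $F$ is a K-V map, the equivalence $(i)\Leftrightarrow(iii)$ of Theorem \ref{Theorem 1} says that for every $1$-form $\al$ on $M^{2}$ the vector fields $(F^{*}\al)^{\#_{1}}$ and $\al^{\#_{2}}$ are $F$-related; evaluating at $x$ and letting $\al$ run over all forms gives, for every $\xi\in T^{*}_{y}M^{2}$,
\begin{equation*}
T_{x}F\bigl(h^{1}_{\#}((T_{x}F)^{*}\xi)\bigr)=h^{2}_{\#}(\xi).
\end{equation*}
In particular $\mathrm{Im}(h^{2}_{\#}(y))\subset\mathrm{Im}(T_{x}F)$. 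As $N^{2}$ is a K-V transversal, the decomposition (\ref{eq8}) reads $T_{y}M^{2}=T_{y}N^{2}\oplus h^{2}_{\#}((T_{y}N^{2})^{\circ})$, and the inclusion just obtained forces $T_{y}M^{2}=T_{y}N^{2}+\mathrm{Im}(T_{x}F)$, i.e. $F\pitchfork N^{2}$. Hence $N^{1}$ is an embedded submanifold with $T_{x}N^{1}=(T_{x}F)^{-1}(T_{y}N^{2})$.

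Next I would check that $(N^{1},\na^{1})$ is an affine submanifold, so that the K-V and Poisson machinery applies to it. Because $F$ is affine it intertwines parallel transport, $T_{\ga(t)}F\circ\tau^{\ga}_{0t}=\tau^{F\circ\ga}_{0t}\circ T_{\ga(0)}F$ for any curve $\ga$ in $M^{1}$. Taking $\ga$ inside $N^{1}$, so that $F\circ\ga$ lies in $N^{2}$, and using that $N^{2}$ is an affine submanifold, one deduces that $\tau^{\ga}_{0t}$ preserves $TN^{1}=(TF)^{-1}(TN^{2})$; this is exactly the affineness of $N^{1}$, and the same intertwining shows the restriction $F^{N}:=F|_{N^{1}}\colon(N^{1},\na^{1})\too(N^{2},\na^{2})$ is affine. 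At this stage the identity $(TF)^{-1}(TN^{2})=TN^{1}$ is immediate, since a vector $v\in T_{x}M^{1}$ satisfies $T_{x}F(v)\in TN^{2}$ precisely when $x\in N^{1}$ and $v\in T_{x}N^{1}$.

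I would then transport the two remaining claims through $TM$. By Theorem \ref{Theorem 1}, $TF\colon(TM^{1},\Pi^{1})\too(TM^{2},\Pi^{2})$ is a Poisson map, and by Lemma \ref{Lemma8}, $TN^{2}$ is a Poisson transversal of $(TM^{2},\Pi^{2})$. The functoriality of Poisson transversals under Poisson maps (\cite{Fernandes}) then gives that $TN^{1}=(TF)^{-1}(TN^{2})$ is a Poisson transversal of $(TM^{1},\Pi^{1})$ and that $TF$ restricts to a Poisson map onto $TN^{2}$ for the induced structures. Reading the first conclusion back through (\ref{eq8}) shows that $N^{1}$ is a K-V transversal. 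Moreover, by the proof of Proposition \ref{Proposition3} the Poisson transversal structure induced on $TN^{i}$ coincides with the tensor $\Pi^{N^{i}}$ associated to the induced K-V bivector $h^{N^{i}}$; since $T(F^{N})=TF|_{TN^{1}}$ is then a Poisson map from $(TN^{1},\Pi^{N^{1}})$ to $(TN^{2},\Pi^{N^{2}})$, the implication $(ii)\Rightarrow(i)$ of Theorem \ref{Theorem 1}, applied to the affine map $F^{N}$, shows that $F^{N}$ is a K-V map between $(N^{1},\na^{1},h^{N^{1}})$ and $(N^{2},\na^{2},h^{N^{2}})$.

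I expect this last matching to be the main obstacle. If one prefers a base-level argument independent of \cite{Fernandes}, one must verify the pointwise relation $h^{N^{2}}_{\#}=T_{x}F^{N}\circ h^{N^{1}}_{\#}\circ(T_{x}F^{N})^{*}$ directly from the two splittings (\ref{eq8}) defining $h^{N^{1}}$ and $h^{N^{2}}$. The delicate point is to show that the two natural extensions to $T^{*}_{x}M^{1}$ of a covector $\al\in T^{*}_{y}N^{2}$, namely $(T_{x}F)^{*}p_{2}^{*}\al$ and $p_{1}^{*}(T_{x}F^{N})^{*}\al$ where $p_{i}$ are the projections of (\ref{eq8}), actually coincide; using the symmetry of $h^{2}$ together with the K-V property of $F$, this reduces to the structural identity $h^{2}_{\#}\bigl((h^{2}_{\#}((T_{y}N^{2})^{\circ}))^{\circ}\bigr)\subset T_{y}N^{2}$, where the transversal decomposition is indispensable.
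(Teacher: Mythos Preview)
Your proof is correct and follows the paper's overall strategy: the transversality of $F$ to $N^{2}$ is shown identically, and the last step---that $F|_{N^{1}}$ is a K-V map because $TF$ restricts to a Poisson map between the induced transversal structures on $TN^{i}$---also matches the paper verbatim (the paper invokes Proposition 2.20 of \cite{Fernandes} at exactly this point).

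The one genuine difference is how you establish that $N^{1}$ is a K-V transversal. The paper does this by a direct base-level argument: for $v\in T_{x}M^{1}$, decompose $T_{x}F(v)=u+\al^{\#_{2}}$ with $u\in T_{y}N^{2}$ and $\al\in(T_{y}N^{2})^{\circ}$; then $F^{*}\al\in(T_{x}N^{1})^{\circ}$ and $w:=v-(F^{*}\al)^{\#_{1}}$ satisfies $T_{x}F(w)=u$, so $w\in T_{x}N^{1}$ and $v\in T_{x}N^{1}+h^{1}_{\#}((T_{x}N^{1})^{\circ})$. You instead invoke the Poisson-transversal functoriality from \cite{Fernandes} to conclude that $TN^{1}=(TF)^{-1}(TN^{2})$ is a Poisson transversal, and then ``read back through (\ref{eq8})'' to the base. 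This last step tacitly uses the \emph{converse} of Lemma \ref{Lemma8}, which the paper never states; it is true and easy (evaluate the Poisson transversal condition at $u=0_{x}$ and use $\Pi_{\#}(\al^{v})=-(\al^{\#})^{h}$, $\Pi_{\#}(\al^{h})=(\al^{\#})^{v}$ to split the identity into horizontal and vertical parts), but you should make it explicit. The paper's direct argument is more elementary and self-contained; your route is more uniform in that it delegates everything to the Poisson side. For the affinity of $N^{1}$ the paper simply cites Theorem 2 of \cite{Linden}, whereas you unfold the parallel-transport argument by hand; the content is the same.
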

\begin{proof} Consider $x\in N^{1}$ and let $y=F(x)\in N^{2}$. Since $F$ is a K-V map we have for all $\alpha\in T^{*}_{y}M^{2}$
	\begin{equation}\label{eq trans}
	\alpha^{\#_{2}}=T_{x}F((T_{x}F^{*}\alpha)^{\#_{1}}).
	\end{equation}
	Therefore $h^{2}_{\#}(T^{*}_{y}M^{2})\subset T_{x}F(T_{x}M^{1})$. Since $N^{2}$ is a K-V transversal this implies that 
	\begin{equation*}
	T_{y}M^{2}=T_{y}N^{2}+h_{\#}^{2}(T_{y}^{*}M^{2})=T_{y}N^{2}+T_{x}F(T_{x}M^{1}).
	\end{equation*}
	This show that $F$ is transverse to $N^{2}$. In particular, $N^{1}$ is a submanifold of $M^{1}$.\\
	The affinity of $N^{1}$ follows from Theorem 2 in (\cite{Linden}). Let $v\in T_{x}M^{1}$, and decompose $T_{x}F(v)=u+\alpha^{\#_{2}}$, with $u\in T_{y}N^{2}$ and $\alpha\in (T_{y}N^{2})^{\circ}$. Then $F^{*}\alpha\in (T_{x}N^{1})^{\circ}$, and by (\ref{eq trans}), the vector $w:=v-(T_{x}F^{*}\alpha)^{\#_{1}}$ is mapped by $T_{x}F$ to $u$. Hence $w\in T_{x}N^{1}$. This shows that 
	\begin{equation*}
	v=w+(T_{x}F^{*}\alpha)^{\#_{1}}\in T_{x}N^{1}+h_{\sharp}^{1}((T_{x}N^{1})^{\circ}).
	\end{equation*}
	Therefore $N^{1}$ is a K-V transversal.
	
	According to the Lemma \ref{Lemma8} we get that  $TN^{2}\subset(TM^{2},\Pi^{2})$ is a Poisson transversal. From Proposition $2.20$ in \cite{Fernandes} it follow that $TF:(TN^{1},\Pi^{N^{1}})\rightarrow(TN^{2},\Pi^{N^{2}})$ is a Poison map. Therefore  $F:(N^{1},\nabla^{1},h^{N^{1}})\rightarrow(N^{2},\nabla^{2},h^{N^{2}}) $ is a K-V map. 
\end{proof}
\begin{corollary}\label{Corollary4}
	Let $(M,\nabla,h)$ be a K-V manifold, and $(N^{1},\na^{1},h^{1})\subset M$ be a K-V transversal and let $(N^{2},\na^{2},h^{2})\subset M$ be a K-V submanifold. Then $N^{1}$ and $N^{2}$ intersect transversally, $N^{1}\cap N^{2}$ is a K-V transversal in $N^{2}$ and a K-V submanifold of $N^{1}$, and the two induced K-V structures on $N^{1}\cap N^{2}$ coincide.
\end{corollary}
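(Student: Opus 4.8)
The plan is to argue pointwise by linear algebra on tangent spaces, fixing an arbitrary $x\in N^{1}\cap N^{2}$ and abbreviating $A=T_{x}N^{1}$, $B=T_{x}N^{2}$, $W=A\cap B$, with annihilators taken in $T_{x}^{*}M$ unless stated otherwise. Throughout I would use that, since $N^{2}$ is a K-V submanifold, Proposition \ref{Propostion1} gives both $\mathrm{Im}(h_{\#}(x))\subset B$ and $h_{\#}(B^{\circ})=0$, while, since $N^{1}$ is a K-V transversal, Definition \ref{definition1} and (\ref{eq8}) provide the splitting $T_{x}M=A\oplus C$ with $C:=h_{\#}(A^{\circ})$. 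The first step is transversality: because $C\subset\mathrm{Im}(h_{\#})\subset B$, the transversal splitting gives $T_{x}M=A+C\subset A+B$, so $T_{x}M=T_{x}N^{1}+T_{x}N^{2}$. Hence $N^{1}$ and $N^{2}$ meet transversally, $N^{1}\cap N^{2}$ is a submanifold with $T_{x}(N^{1}\cap N^{2})=W$, and its affineness follows exactly as in Corollary \ref{Corollary3}, since parallel transport preserves $A$ and $B$ and hence $W$.

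Next I would analyze the two induced structures. For the K-V submanifold claim in $N^{1}$, recall that the transversal bivector is $h^{1}_{\#}=p\circ h_{\#}\circ p^{*}$, where $p:T_{x}M\to A$ is the projection along $C$. Since $C\subset B$ and $\mathrm{Im}(h_{\#})\subset B$, projecting a vector of $B$ along $C$ keeps it in $B$; therefore $\mathrm{Im}(h^{1}_{\#})\subset A\cap B=W$, which by Proposition \ref{Propostion1} is precisely the condition for $N^{1}\cap N^{2}$ to be a K-V submanifold of $N^{1}$. For the transversal claim in $N^{2}$, the key identity is $h^{2}_{\#}(W^{\circ})=C$, where now $W^{\circ}$ denotes the annihilator of $W$ in $T_{x}^{*}N^{2}$. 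I would obtain this by observing that the restriction map $(T_{x}\iota_{2})^{*}:T_{x}^{*}M\to T_{x}^{*}N^{2}$ sends $A^{\circ}$ injectively (injectivity from transversality) onto $W^{\circ}$ (surjectivity by the dimension count $\dim W=\dim A+\dim B-\dim M$), combined with the factorization $h^{2}_{\#}\circ(T_{x}\iota_{2})^{*}=h_{\#}$ valid for the K-V submanifold $N^{2}$. Together with the induced decomposition $B=W\oplus C$ this yields $T_{x}N^{2}=W\oplus h^{2}_{\#}(W^{\circ})$, i.e. $N^{1}\cap N^{2}$ is a K-V transversal in $N^{2}$.

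For the coincidence of the two induced bivectors $\tilde h$ (from $N^{1}$) and $\hat h$ (from $N^{2}$), I would write both $\sharp$-maps in terms of $h_{\#}$ and compare. The clean point is that the restriction to $B$ of the projection $p:T_{x}M\to A$ agrees with the projection $p':B\to W$ along $C$; using this, both $\tilde h_{\#}(\zeta)$ and $\hat h_{\#}(\zeta)$ reduce to $p\bigl(h_{\#}(\cdot)\bigr)$ evaluated on covectors $p^{*}\xi$ and $\beta$ respectively, where $\xi\in A^{*}$ extends $\zeta$ and $\beta\in T_{x}^{*}M$ restricts to $(p')^{*}\zeta$ on $B$. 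A short check on $W$ and on $C$ shows that their difference $\delta:=\beta-p^{*}\xi$ lies in $B^{\circ}$, whence $h_{\#}(\delta)=0$ by the K-V submanifold condition $h_{\#}(B^{\circ})=0$. Thus $h_{\#}(p^{*}\xi)=h_{\#}(\beta)$ and $\tilde h_{\#}(\zeta)=\hat h_{\#}(\zeta)$, so the two structures agree.

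The step I expect to be the main obstacle is the transversal claim in $N^{2}$, specifically the identity $h^{2}_{\#}(W^{\circ})=h_{\#}(A^{\circ})$: it requires matching the annihilator of $W$ inside $T_{x}^{*}N^{2}$ with the annihilator of $A$ inside $T_{x}^{*}M$ through the restriction map, a comparison that only closes after invoking the transversality dimension count. As an alternative to all of this pointwise bookkeeping, one could instead lift to the tangent bundle: by Lemma \ref{Lemma8} and Corollary \ref{Corollary1}, $TN^{1}$ is a Poisson transversal and $TN^{2}$ a Poisson submanifold of $(TM,\Pi)$, and $TN^{1}\cap TN^{2}=T(N^{1}\cap N^{2})$, so the statement would reduce to its Poisson counterpart in \cite{Fernandes}, provided one checks (via Proposition \ref{Proposition3} and the injectivity of the correspondence $h\mapsto\Pi$) that the lifts of the two induced K-V structures coincide with the Poisson structures produced there.
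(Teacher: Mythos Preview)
Your pointwise linear-algebra argument is correct: the transversality, the affineness of the intersection, the inclusion $\mathrm{Im}(h^{1}_{\#})\subset W$, the identification $h^{2}_{\#}(W^{\circ})=h_{\#}(A^{\circ})$ via the restriction map, and the comparison $\delta\in B^{\circ}$ all go through as you describe.

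However, the paper takes a much shorter and genuinely different route. The corollary is stated immediately after Proposition~\ref{Proposition4} without proof, because it \emph{is} a one-line application of that proposition: take the K-V map to be the inclusion $\iota_{2}:(N^{2},\nabla^{2},h^{2})\hookrightarrow(M,\nabla,h)$ (a K-V map by the very definition of K-V submanifold) and take the K-V transversal to be $N^{1}\subset M$. Proposition~\ref{Proposition4} then yields at once that $\iota_{2}$ is transverse to $N^{1}$, that $\iota_{2}^{-1}(N^{1})=N^{1}\cap N^{2}$ is a K-V transversal in $N^{2}$, and that $\iota_{2}$ restricts to a K-V map from $(N^{1}\cap N^{2},h^{N^{1}\cap N^{2}})$ into $(N^{1},h^{1})$. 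This last statement says the inclusion into $N^{1}$ is a K-V map, so $N^{1}\cap N^{2}$ is a K-V submanifold of $N^{1}$, and the uniqueness clause of Proposition~\ref{Propostion1} forces the two induced structures to coincide. What your approach buys is an explicit, self-contained verification that does not rely on the functoriality proposition (and in particular does not pass through $TM$ or the Poisson picture at all); what the paper's approach buys is that all the linear algebra you carry out---including the step you flag as the main obstacle---has already been packaged inside the proof of Proposition~\ref{Proposition4}, so nothing needs to be redone. Your closing alternative via Lemma~\ref{Lemma8} and Corollary~\ref{Corollary1} is closer in spirit to the paper's general methodology, but is still less direct than simply invoking Proposition~\ref{Proposition4}.
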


Now we give the relation between K-V transversals and the affine foliation.
\begin{proposition}
	Let $(M,\na,h)$ be a K-V manifold with affine foliation $\mathcal{F}$. An affine submanifold $N\subset M$ is a K-V transversal if and only if for all $L\in\mathcal{F}$, the intersection $L\cap N$ is a K-V submanifold of $L$. Hence, the affine foliation of $\left(N,\na,h^{N}\right)$ consists of the connected compenents of the intersection $L\cap N$. 
\end{proposition}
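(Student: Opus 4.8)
The plan is to reduce the statement to a pointwise computation in linear algebra at each $x\in N$, in the same spirit as the proof of the corresponding result for K-V submanifolds. Fix $x\in N$ and let $L\in\mathcal{F}$ be the affine leaf through $x$. Recall from Section 1 that $T_{x}L=\mathrm{Im}(h_{\#}(x))$ and that the pseudo-Hessian structure of the leaf satisfies $h^{L}=g_{L}^{-1}$, with $h^{L}_{\#}:T^{*}_{x}L\to T_{x}L$ an isomorphism. Write $V=T_{x}M$, $W=T_{x}N$ and $U=T_{x}L=\mathrm{Im}(h_{\#}(x))$. Since $h$ is symmetric, $h_{\#}(x)$ is self-adjoint, so $\ker(h_{\#}(x))=\mathrm{Im}(h_{\#}(x))^{\circ}=U^{\circ}$; consequently $h_{\#}(x)$ factors as $V^{*}\xrightarrow{\,r\,}U^{*}\xrightarrow{\,h^{L}_{\#}\,}U$, where $r$ is the restriction of covectors to $U$. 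The whole argument rests on feeding the transversal splitting (\ref{eq8}) through this factorization.

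The key linear-algebra fact I would establish first is that, whenever $W+U=V$,
\begin{equation*}
h_{\#}(W^{\circ})=h^{L}_{\#}\big((W\cap U)^{\circ_{U}}\big),
\end{equation*}
where $(\,\cdot\,)^{\circ_{U}}$ denotes the annihilator taken inside $U^{*}$. Indeed $h_{\#}(W^{\circ})=h^{L}_{\#}(r(W^{\circ}))$ by the factorization, and one always has $r(W^{\circ})\subseteq (W\cap U)^{\circ_{U}}$; equality follows from a dimension count, using $\ker r=U^{\circ}$ together with $W^{\circ}\cap U^{\circ}=(W+U)^{\circ}=0$ to get $\dim r(W^{\circ})=\dim W^{\circ}=\dim V-\dim W$, which matches $\dim(W\cap U)^{\circ_{U}}=\dim U-\dim(W\cap U)=\dim V-\dim W$. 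Because $h_{\#}(W^{\circ})\subseteq U$, the modular law then yields the pointwise equivalence
\begin{equation*}
W+h_{\#}(W^{\circ})=V \quad\Longleftrightarrow\quad W+U=V \ \text{ and }\ (W\cap U)+h^{L}_{\#}\big((W\cap U)^{\circ_{U}}\big)=U.
\end{equation*}
The left-hand side is the K-V transversal condition for $N$ at $x$; the second condition on the right is exactly the K-V transversal condition for $W\cap U=T_{x}(N\cap L)$ inside $(U,g_{L})$, i.e. it says that $N\cap L$ is a K-V transversal of $L$ (equivalently, a non-degenerate, hence pseudo-Hessian, submanifold of $L$) at $x$.

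Assembling the two directions is then routine. If $N$ is a K-V transversal, the equivalence applied at every $x\in N\cap L$ forces $T_{x}N+T_{x}L=T_{x}M$, so $N$ meets each leaf transversally and $N\cap L$ is an affine submanifold of $L$ (the intersection of two totally geodesic submanifolds, as in Corollary \ref{Corollary3}) with $T_{x}(N\cap L)=T_{x}N\cap T_{x}L$; the second condition then exhibits $N\cap L$ as a K-V transversal of $L$. Conversely, if $N$ is transverse to every leaf and each $N\cap L$ is a K-V transversal of $L$, the same equivalence reproduces the splitting (\ref{eq8}) at every point, so $N$ is a K-V transversal. For the final assertion I would compute $\mathrm{Im}(h^{N}_{\#}(x))$ from the construction $h^{N}_{\#}=p\circ h_{\#}\circ p^{*}$ attached to $T_{N}M=TN\oplus h_{\#}(TN^{\circ})$; a $g_{L}$-orthogonality/double-annihilator computation gives $\mathrm{Im}(h^{N}_{\#}(x))=W\cap U=T_{x}(N\cap L)$, so the affine leaves of $(N,\na,h^{N})$ are precisely the connected components of the $N\cap L$.

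The step I expect to be the main obstacle is not the linear algebra but the global bookkeeping: ensuring that transversality of $N$ with every leaf really makes each $N\cap L$ an embedded, cleanly intersecting affine submanifold of the (only weakly embedded) leaf $L$, and checking that the two pseudo-Hessian structures on $N\cap L$—the one it inherits as a transversal inside $L$ and the one it carries as an affine leaf of $(N,\na,h^{N})$—agree. An alternative that sidesteps redoing the computation is to work in the cotangent picture: by Lemma \ref{Lemma8}, $N$ is a K-V transversal iff $TN$ is a Poisson transversal of $(TM,\Pi)$, whose symplectic leaves are the $TL$; the standard theory of Poisson transversals (Propositions $2.13$ and $2.20$ of \cite{Fernandes}) intersects $TN$ with $TL=T(N\cap L)$ and transports the claim back through the equivalence ``symplectic submanifold of $TL$'' $\Longleftrightarrow$ ``pseudo-Hessian submanifold of $L$'', which is once more the content of the displayed lemma above.
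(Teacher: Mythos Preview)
Your proposal is correct and follows essentially the same pointwise linear-algebra reduction as the paper. The paper's proof is much terser: it simply splits the direct sum $T_{N}M=TN\oplus h_{\#}(TN^{\circ})$ into the sum condition (i) $T_{N}M=TN+h_{\#}(TN^{\circ})$ and the trivial-intersection condition (ii) $TN\cap h_{\#}(TN^{\circ})=0$, then asserts that (i) expresses transversality of $N$ to the leaves and that (ii), given (i), expresses non-degeneracy of the pullback of $g_{L}$ to $L\cap N$. Your factorization $h_{\#}=h^{L}_{\#}\circ r$, the identity $h_{\#}(W^{\circ})=h^{L}_{\#}((W\cap U)^{\circ_{U}})$ under $W+U=V$, and the modular-law step make precisely these assertions rigorous; your argument is the fleshed-out version of the paper's sketch. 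Your treatment of the final clause (computing $\mathrm{Im}(h^{N}_{\#}(x))=T_{x}N\cap T_{x}L$ via the $g_{L}$-double-orthogonal) goes beyond what the paper's proof supplies, since the paper leaves that part implicit.
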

\begin{proof}
	The condition for affine submanifold $N\subset M$ to be a K-V transversal is 
	\begin{equation*}
	T_{N}M=TN\oplus h_{\#}\left(TN^{\circ}\right).
	\end{equation*}
	This condition is equivalent to have both the following conditions satisfied:
	\begin{itemize}
		\item[$(i)$] $T_{N}M=TN+h_{\#}\left(TN^{\circ}\right)$.
		\item[$(ii)$] $TN\cap h_{\#}\left(TN^{\circ}\right)=0$.
	\end{itemize}
	Condition $(i)$ says that $N$ is transverse to the affine leaves and condition $(ii)$ (provided $(i)$ is satisfied) says that the kernel of the pullback of $g_{L}$ to $L\cap N$ is trivial, so $L\cap N$ is a K-V submanifold of $L$.
\end{proof}

We finish this section by the following example:
\begin{example} Consider $\mathbb{R}^{n}$ endowed with its canonical affine structure $\na$ and the standard K-V tensor:
	\begin{equation*}
	h=\displaystyle\sum_{i=}^{n}\frac{\partial}{\partial x_{i}}\otimes\frac{\partial}{\partial x_{i}}.
	\end{equation*}
	Let $$F:(\mathbb{R}^{n},\na,h)\rightarrow(\mathbb{R}^{m},\na,h)$$
	be an affine map.Clearly $F$ is a K-V map and for every regular value $x\in\mathbb{R}^{m}$ of $F$ the affine submanifold $F^{-1}({x})\subset\mathbb{R}^{n}$ is a K-V transversal. 
\end{example}
\section{Coisotropic Koszul-Vinberg submanifolds}\label{sec4}

Let $(P,\pi)$ be a Poisson manifold. A submanifold $N\subset P$ is called coisotropic submanifold if and only if $\pi_{\#}(TN^{\circ})\subset TN$. Now we have a similar statement for K-V manifolds, we consider a K-V manifold $(M,\nabla,h)$ and $N$ be an affine submanifold of $M$.
\begin{proposition}\label{Proposition5} 
	The following assertions are equivalent
	\begin{enumerate}
		\item $h_{\#}(TN^{\circ})\subset TN$.
		\item $TN$ is a coisotropic submanifold of $(TM,\Pi)$.
	\end{enumerate}
\end{proposition}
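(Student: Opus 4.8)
The plan is to reduce both assertions to the single pointwise condition $h_{\#}(T_xN^{\circ})\subset T_xN$ by describing explicitly the tangent space $T_u(TN)$ and its annihilator inside $T^*_u(TM)$ at a point $u\in T_xN$, and then invoking the formulas (\ref{eq 12}) for $\Pi_{\#}$. First I would identify $T_u(TN)$. Since $N$ is an affine submanifold of $(M,\na)$, parallel transport in $M$ of vectors tangent to $N$ along curves in $N$ stays tangent to $N$, and the covariant derivative along such curves of sections of $TN$ remains tangent to $N$; hence both the horizontal and the vertical lifts of vectors in $T_xN$ are tangent to $TN$, and a dimension count gives
\begin{equation*}
T_u(TN)=\{X^h_u\mid X\in T_xN\}\oplus\{Y^v_u\mid Y\in T_xN\}.
\end{equation*}
Dually, using the defining relations of $\al^h$ and $\al^v$ (namely that $\al^h$ kills all vertical lifts and $\al^v$ kills all horizontal lifts), every $\al\in T_xN^{\circ}$ produces covectors $\al^h_u$ and $\al^v_u$ that annihilate both families above. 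Comparing dimensions ($\dim T_u(TN)^{\circ}=2(n-k)$ with $n=\dim M$, $k=\dim N$, against $\dim T_xN^{\circ}=n-k$) shows these lifts in fact span, so
\begin{equation*}
T_u(TN)^{\circ}=\{\al^h_u\mid \al\in T_xN^{\circ}\}\oplus\{\al^v_u\mid \al\in T_xN^{\circ}\}.
\end{equation*}

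Next I would apply $\Pi_{\#}$ to this annihilator. By (\ref{eq 12}) we have $\Pi_{\#}(\al^h_u)=(\al^{\#})^v_u$ and $\Pi_{\#}(\al^v_u)=-(\al^{\#})^h_u$, so $\Pi_{\#}(T_u(TN)^{\circ})$ is spanned by the vectors $(\al^{\#})^v_u$ and $(\al^{\#})^h_u$ as $\al$ ranges over $T_xN^{\circ}$. Comparing this with the description of $T_u(TN)$ above, the vertical lift $(\al^{\#})^v_u$ lies in $T_u(TN)$ precisely when $\al^{\#}=h_{\#}(\al)\in T_xN$, and the same condition governs the horizontal lift $(\al^{\#})^h_u$. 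Therefore $\Pi_{\#}(T_u(TN)^{\circ})\subset T_u(TN)$ holds for every $u$ if and only if $h_{\#}(T_xN^{\circ})\subset T_xN$ holds for every $x\in N$, which is exactly the equivalence of $1.$ and $2.$

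The computation mirrors that of Lemma \ref{Lemma8}, but is simpler since no transversal direct-sum decomposition is needed and neither a Schouten-bracket nor an integrability argument enters. I expect the only genuine point of care to be the correct identification of $T_u(TN)$ and $T_u(TN)^{\circ}$: the affineness of $N$ is used precisely to ensure that \emph{both} the horizontal and vertical lifts of $T_xN$ stay tangent to $TN$, so that the annihilator is generated by the lifts of $T_xN^{\circ}$ and nothing more. The mild subtlety is verifying that these two lifted families actually span $T_u(TN)^{\circ}$ rather than merely being contained in it, which the dimension count settles cleanly; once that is in place the equivalence follows at once from (\ref{eq 12}).
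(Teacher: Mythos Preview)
Your proposal is correct and follows essentially the same route as the paper's proof: both identify $T_u(TN)^{\circ}$ with the span of the lifts $\al^h_u,\al^v_u$ for $\al\in T_xN^{\circ}$ and then apply the formulas (\ref{eq 12}) for $\Pi_{\#}$. Your version is simply a more detailed unpacking of the paper's two-line argument, in particular making explicit the role of the affineness of $N$ in the description of $T_u(TN)$ and carrying out the dimension count that the paper leaves implicit.
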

\begin{proof}
	Let $\alpha\in\Omega^{1}(M)$ then $\alpha_{|_{TN}}=0$ if and only if $\alpha^{h}_{|_{T(TN)}}=\alpha^{v}_{|_{T(TN)}}=0$. Hence the desired equivalence follows from equation $(1.4)$.
\end{proof}
\begin{definition}\label{Definition3}
	A coisotropic K-V submanifold $N\subset M$ is an affine submanifold such that $h_{\#}(TN^{\circ})\subset TN$.
\end{definition}

Clearly K-V submanifolds is a subclass of coisotropic K-V submanifolds.
\begin{proposition}\label{Proposition6}
	Let $(M,\nabla,h)$ be a K-V manifold. For an affine closed submanifold $N\subset M$ the following conditions are equivalent:
	\begin{enumerate}
		\item[$(i)$] $N$ is a coisotropic K-V submanifold;
		\item[$(ii)$] For every $f,g\in\mathcal{J}(N)$ where $\mathcal{J}(N)$ is the vanishing ideal we have $X_{f}(g)\in\mathcal{J}(N)$;
		\item[$(iii)$] For every $f\in\mathcal{J}(N)$ the Hamiltonian vector field $X_{f}$ is tangent to $N$.
	\end{enumerate}
\end{proposition}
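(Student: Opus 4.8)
The plan is to reduce everything to the single identity $X_f(g)=\langle dg,X_f\rangle=h(df,dg)$, valid for all $f,g\in C^\infty(M)$ by definition of the K-V Hamiltonian vector field $X_f=(df)^\#$ together with $\beta(\alpha^\#)=h(\alpha,\beta)$. On top of this I would use two standard facts about a \emph{closed} submanifold $N$: first, a vector field $X\in\Gamma(TM)$ is tangent to $N$ if and only if $X(g)\in\mathcal{J}(N)$ for every $g\in\mathcal{J}(N)$ (exactly the criterion already invoked in the proof of Proposition \ref{Propostion1}); second, the conormal space is realized by differentials of vanishing functions, i.e. $T_xN^\circ=\{df_x\mid f\in\mathcal{J}(N)\}$ for each $x\in N$. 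Once these are in place the three conditions become almost tautologically linked.

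First I would dispatch $(ii)\Leftrightarrow(iii)$. By the tangency criterion, for a fixed $f\in\mathcal{J}(N)$ the field $X_f$ is tangent to $N$ precisely when $X_f(g)\in\mathcal{J}(N)$ for all $g\in\mathcal{J}(N)$. Since $X_f(g)=h(df,dg)$, this is verbatim the content of $(ii)$ read for a single $f$, and quantifying over all $f\in\mathcal{J}(N)$ turns it into $(iii)$. No computation beyond the key identity is required here.

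Next I would prove $(i)\Leftrightarrow(iii)$. For $(i)\Rightarrow(iii)$: if $f\in\mathcal{J}(N)$ then $df_x\in T_xN^\circ$ for every $x\in N$, so $X_f(x)=h_\#(df_x)\in h_\#(T_xN^\circ)\subset T_xN$ by coisotropy, whence $X_f$ is tangent to $N$. For $(iii)\Rightarrow(i)$: using the conormal description, every $\alpha\in T_xN^\circ$ equals $df_x$ for some $f\in\mathcal{J}(N)$, so $h_\#(\alpha)=X_f(x)\in T_xN$; letting $\alpha$ and $x$ vary yields $h_\#(TN^\circ)\subset TN$, which is $(i)$.

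The only step that is not purely formal is the conormal description $T_xN^\circ=\{df_x\mid f\in\mathcal{J}(N)\}$ used in $(iii)\Rightarrow(i)$: one constructs $f$ in adapted coordinates $(x_1,\dots,x_k,x_{k+1},\dots,x_n)$ with $N=\{x_{k+1}=\dots=x_n=0\}$ and then globalizes by a cutoff, which is exactly where the hypothesis that $N$ is closed is essential. Everything else follows mechanically from $X_f(g)=h(df,dg)$. As an alternative route one could instead lift to $TM$ and combine Proposition \ref{Proposition5} with the closed-submanifold characterization of coisotropic submanifolds in Poisson geometry, but the direct argument above is shorter and self-contained, so I would present that one.
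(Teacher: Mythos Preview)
Your proposal is correct and uses precisely the same ingredients as the paper's proof: the identity $X_f(g)=h(df,dg)$, the tangency criterion for closed submanifolds, and the description of $T_xN^\circ$ by differentials of functions in $\mathcal{J}(N)$. The only cosmetic difference is that the paper argues via the cycle $(i)\Rightarrow(ii)\Rightarrow(iii)\Rightarrow(i)$ whereas you prove $(ii)\Leftrightarrow(iii)$ and $(i)\Leftrightarrow(iii)$ separately, but the substance is identical.
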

\begin{proof} 
	$(i)\Longrightarrow(ii)$ Let $f,g\in\mathcal{J}(N)$ clearly for any $x\in N$ we have $d_{x}f,d_{x}g\in T_{x}N^{\circ}$. Now let $N\subset M$ be a coisotropic K-V submanifold then $h(x)(d_{x}f,d_{x}g)=0$, so $h(df,dg)\in\mathcal{J}(N)$.
	
	$(ii)\Longrightarrow(iii)$ Assume that $\mathcal{J}(N)$ is stable under $h$. Let $f,g\in\mathcal{J}(N)$, for all  $x\in N$
	\begin{equation*}
	X_{f}(g)(x)=h(x)(d_{x}f,d_{x}g)=0.
	\end{equation*}
	The closedness of the submanifold $N$ implies that $X_{f}$ is tangent to $N$.
	
	$(iii)\Longrightarrow(i)$  Again the closedness of the submanifold $N$ implies that $T_{x}N^{\circ}$ is generated  by elements $d_{x}f$ where $f\in\mathcal{J}(N)$. so we conclude that for any $\alpha,\beta\in TN^{\circ}$, $h(\alpha,\beta)=0$. Therefore, $N$ is coisotropic.
\end{proof}
\begin{proposition}\label{Proposition7}
	Let $F:(M^{1},\nabla^{1},h^{1})\rightarrow(M^{2},\nabla^{2},h^{2})$ be a K-V map and assume that $F$ is transverse to a coisotropic K-V submanifold $N^{2}\subset M^{2}$. Then $F^{-1}(N^{2})\subset M^{1}$ is a coisotropic K-V submanifold.
\end{proposition}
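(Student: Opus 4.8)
The plan is to set $N^{1}:=F^{-1}(N^{2})$ and to verify directly the inclusion required by Definition \ref{Definition3}, namely $h^{1}_{\#}\big((T_{x}N^{1})^{\circ}\big)\subset T_{x}N^{1}$ at every $x\in N^{1}$, reusing the transversality computations already carried out in the proof of Proposition \ref{Proposition4}.

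First I would record the smooth and affine structure of $N^{1}$. Since $F$ is transverse to $N^{2}$ by hypothesis, $N^{1}=F^{-1}(N^{2})$ is a submanifold of $M^{1}$ with tangent space $T_{x}N^{1}=(T_{x}F)^{-1}(T_{y}N^{2})$ at each $x\in N^{1}$, where $y=F(x)$; its affinity follows exactly as in Proposition \ref{Proposition4}, via Theorem $2$ in \cite{Linden}, because $F$ is affine and transverse to the affine submanifold $N^{2}$.

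The crux is a pointwise identity relating the conormal spaces, and this is the step I expect to be the main obstacle. Concretely, I would prove that transversality forces
\begin{equation*}
(T_{x}N^{1})^{\circ}=(T_{x}F)^{*}\big((T_{y}N^{2})^{\circ}\big).
\end{equation*}
The inclusion $\supseteq$ is immediate from $T_{x}N^{1}=(T_{x}F)^{-1}(T_{y}N^{2})$. For $\subseteq$, given $\alpha\in(T_{x}N^{1})^{\circ}$ I would first observe that $\alpha$ annihilates $\ker(T_{x}F)\subset T_{x}N^{1}$, so $\alpha=(T_{x}F)^{*}\eta$ for some $\eta$; such an $\eta$ only annihilates $T_{y}N^{2}\cap\mathrm{Im}(T_{x}F)$, and here the transversality relation $T_{y}M^{2}=T_{y}N^{2}+\mathrm{Im}(T_{x}F)$ is indispensable: it allows me to correct $\eta$ by an element of $(\mathrm{Im}(T_{x}F))^{\circ}=\ker\big((T_{x}F)^{*}\big)$ so as to produce a genuine representative $\beta\in(T_{y}N^{2})^{\circ}$ with $\alpha=(T_{x}F)^{*}\beta$ unchanged.

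With this identity the conclusion is short. Given $\alpha\in(T_{x}N^{1})^{\circ}$, write $\alpha=(T_{x}F)^{*}\beta$ with $\beta\in(T_{y}N^{2})^{\circ}$ and apply the K-V map relation (\ref{eq trans}), which gives $\beta^{\#_{2}}=T_{x}F\big(((T_{x}F)^{*}\beta)^{\#_{1}}\big)=T_{x}F(\alpha^{\#_{1}})$. Since $N^{2}$ is coisotropic and $\beta\in(T_{y}N^{2})^{\circ}$, we have $\beta^{\#_{2}}\in T_{y}N^{2}$, whence $T_{x}F(\alpha^{\#_{1}})\in T_{y}N^{2}$ and therefore $\alpha^{\#_{1}}\in(T_{x}F)^{-1}(T_{y}N^{2})=T_{x}N^{1}$. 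This establishes $h^{1}_{\#}\big((T_{x}N^{1})^{\circ}\big)\subset T_{x}N^{1}$, so $N^{1}$ is a coisotropic K-V submanifold by Definition \ref{Definition3}. As an independent cross-check, one could instead pass to the tangent bundles: by Proposition \ref{Proposition5} and Theorem \ref{Theorem 1}, coisotropy of $N^{i}$ is equivalent to coisotropy of $TN^{i}$ in $(TM^{i},\Pi^{i})$ and $F$ being a K-V map is equivalent to $TF$ being Poisson, so the statement reduces to the corresponding functoriality of coisotropic submanifolds under transverse Poisson maps.
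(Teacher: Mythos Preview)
Your direct argument is correct, and your closing ``cross-check'' is in fact precisely the route the paper takes: the paper's proof consists of one line invoking Proposition~2.34 in \cite{Fernandes} for the Poisson map $TF:(TM^{1},\Pi^{1})\to(TM^{2},\Pi^{2})$ together with Theorem~2 in \cite{Linden} for the affinity of $F^{-1}(N^{2})$, exactly as you sketch at the end.

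The main body of your proposal is therefore genuinely different from the paper. You stay entirely on the base manifolds and verify $h^{1}_{\#}\big((T_{x}N^{1})^{\circ}\big)\subset T_{x}N^{1}$ by hand, via the conormal identity $(T_{x}N^{1})^{\circ}=(T_{x}F)^{*}\big((T_{y}N^{2})^{\circ}\big)$ and the K-V map relation~(\ref{eq trans}). This is more self-contained: it does not appeal to the external Poisson result from \cite{Fernandes}, and it makes transparent exactly where each hypothesis (transversality for the conormal identity, the K-V map relation for pushing $\#_{1}$ forward, coisotropy of $N^{2}$ for the landing space) is used. The paper's approach, by contrast, is shorter and leverages the machinery already built (Theorem~\ref{Theorem 1}, Proposition~\ref{Proposition5}) to reduce everything to a known Poisson statement; it is in keeping with the paper's general philosophy of transporting K-V questions to Poisson ones on $TM$. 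One small omission in your cross-check: to apply the Poisson functoriality you would also need $TF$ to be transverse to $TN^{2}$, which follows from the transversality of $F$ to $N^{2}$ but is not stated explicitly (the paper does not spell this out either).
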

\begin{proof} The result follows by applying Propostion $2.34$ in (\cite{Fernandes}) to the Poisson map $TF:(TM^{1},\Pi^{1})\rightarrow(TM^{2},\Pi^{2})$, with the affinity of $F^{-1}(N^{2})$ which is garanted by Theorem 2 in (\cite{Linden}). 	
\end{proof}

There is one more important property of coisotropic objects and which shows their relevance in K-V geometry. In order to express it, we introduce the following notation: If $(M^{1},\nabla^{1},h^{1})$ and $(M^{2},\nabla^{2},h^{2})$ are K-V manifolds we denote by $(M^{1}\times\overline{M}^{2},\nabla,h)$ the K-V manifold such that the canonicals projection $p_{1}:(M^{1}\times\overline{M}^{2},\nabla,h)\rightarrow(M^{1},\nabla^{1},h^{1})$ and  $p_{2}:(M^{1}\times\overline{M}^{2},\nabla,h)\rightarrow(M^{2},\nabla^{2},-h^{2})$ are K-V maps.
\begin{proposition}\label{Proposition8} 
	For a smooth map $F:(M^{1},\nabla^{1},h^{1})\rightarrow(M^{2},\nabla^{2},h^{2})$ the following conditions are equivalent:
	\begin{itemize}
		\item[$(i)$] $F$ is a K-V map.
		\item[$(ii)$] $\mathrm{Graph}(F)\subset M^{1}\times\overline{M}^{2}$ is a coisotropic K-V submanifold. 
	\end{itemize}
\end{proposition}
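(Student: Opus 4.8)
The plan is to exploit the bridge to Poisson geometry that the paper has already established, reducing the statement to the analogous fact for Poisson maps and their graphs. By Theorem \ref{Theorem 1}, a smooth affine map $F$ is a K-V map if and only if its tangent map $TF:(TM^{1},\Pi^{1})\to(TM^{2},\Pi^{2})$ is a Poisson map. On the other side, by Proposition \ref{Proposition5} and Definition \ref{Definition3}, a submanifold $S\subset M^{1}\times\overline{M}^{2}$ is a coisotropic K-V submanifold precisely when $TS$ is a coisotropic submanifold of $(T(M^{1}\times\overline{M}^{2}),\Pi)$. The classical fact in Poisson geometry is that $F$ is Poisson if and only if $\mathrm{Graph}(F)$ is coisotropic in $(P_{1}\times\overline{P}_{2},\pi_{1}\oplus(-\pi_{2}))$.

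First I would set $G:=\mathrm{Graph}(F)\subset M^{1}\times\overline{M}^{2}$ and observe, using Corollary \ref{Corollary5}, that the canonical diffeomorphism $\psi:T(M^{1}\times\overline{M}^{2})\to TM^{1}\times TM^{2}$ is Poisson, carrying $\Pi$ to $\Pi^{1}\oplus(-\Pi^{2})$ (the sign coming from the $\overline{M}^{2}$ factor, whose K-V tensor is $-h^{2}$ and whose associated Poisson tensor is therefore $-\Pi^{2}$). Next I would identify the image $\psi(TG)$ with $\mathrm{Graph}(TF)\subset TM^{1}\times TM^{2}$; this is the compatibility between the tangent functor and taking graphs, and it holds because $T(\mathrm{Graph}(F))=\mathrm{Graph}(TF)$ under the natural identification. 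Then the chain of equivalences reads: $F$ is a K-V map $\iff$ $TF$ is a Poisson map (Theorem \ref{Theorem 1}) $\iff$ $\mathrm{Graph}(TF)$ is coisotropic in $(TM^{1}\times TM^{2},\Pi^{1}\oplus(-\Pi^{2}))$ (the standard Poisson result) $\iff$ $TG$ is coisotropic in $(T(M^{1}\times\overline{M}^{2}),\Pi)$ (via the Poisson diffeomorphism $\psi$) $\iff$ $G$ is a coisotropic K-V submanifold (Proposition \ref{Proposition5}).

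The main obstacle I anticipate is the bookkeeping needed to make the middle equivalence rigorous, specifically verifying that $\psi(T\,\mathrm{Graph}(F))=\mathrm{Graph}(TF)$ as submanifolds and that $\psi$ intertwines the two Poisson structures with the correct sign on the second factor. I would handle the sign by checking on the spanning one-forms $\alpha^{h},\alpha^{v}$ that the Poisson tensor of $(M^{2},\nabla^{2},-h^{2})$ is exactly $-\Pi^{2}$, which is immediate from the defining formulas (\ref{eq 11}). One also needs that $G$ is an affine submanifold so that Proposition \ref{Proposition5} applies; this follows since $F$ affine forces $\mathrm{Graph}(F)$ to be affine in the product affine structure, parallel transport preserving the graph of a totally geodesic map. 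With these points settled the equivalence is purely formal, so the proof is essentially a transfer of the known Poisson statement through the functor $M\mapsto(TM,\Pi)$.

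Alternatively, and perhaps more cleanly, I would give a direct argument at the level of $M^{1}\times\overline{M}^{2}$ itself, avoiding the passage to tangent bundles. For $(x,F(x))\in G$ the conormal space $T_{(x,F(x))}G^{\circ}$ consists of covectors of the form $(-T_{x}F^{*}\beta,\beta)$ for $\beta\in T^{*}_{F(x)}M^{2}$. Applying $h_{\#}=h^{1}_{\#}\oplus(-h^{2})_{\#}$ gives the vector $\bigl(-(T_{x}F^{*}\beta)^{\#_{1}},-\beta^{\#_{2}}\bigr)$, and this lies in $T_{(x,F(x))}G=\{(v,T_{x}F(v))\}$ exactly when $-\beta^{\#_{2}}=T_{x}F\bigl(-(T_{x}F^{*}\beta)^{\#_{1}}\bigr)$, i.e. when $\beta^{\#_{2}}=T_{x}F\bigl((F^{*}\beta)^{\#_{1}}\bigr)$ for all $\beta$. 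By the equivalence $(i)\iff(iii)$ of Theorem \ref{Theorem 1}, this $F$-relatedness of $(F^{*}\beta)^{\#_{1}}$ and $\beta^{\#_{2}}$ is precisely the condition that $F$ be a K-V map, completing the argument.
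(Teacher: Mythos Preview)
Your proposal is correct. The paper's own proof is extremely terse: it isolates the affinity equivalence as Lemma~\ref{Lemma6} ($F$ is affine if and only if $\mathrm{Graph}(F)$ is an affine submanifold of the product) and then simply asserts that Proposition~\ref{Proposition8} ``follows directly'' from it, leaving the coisotropy computation implicit. Your second, direct argument is exactly the computation that fills this gap: identifying $T_{(x,F(x))}G^{\circ}=\{(-T_xF^{*}\beta,\beta)\}$, applying $h^{1}_{\#}\oplus(-h^{2})_{\#}$, and recognizing the resulting tangency condition as the $F$-relatedness criterion $(iii)$ of Theorem~\ref{Theorem 1}. This is presumably what the paper has in mind, and your write-up is more complete than the paper's.

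Your first approach, via the tangent functor and the Poisson analogue, is a genuinely different route. It trades the short pointwise linear-algebra check for an appeal to Corollary~\ref{Corollary5}, Proposition~\ref{Proposition5}, and the classical Poisson characterization of coisotropic graphs. This is in keeping with the paper's general philosophy of transferring statements through $M\mapsto(TM,\Pi)$, and it has the advantage of making the parallel with Poisson geometry explicit; the cost is the bookkeeping you flag (the sign on the second factor and the identification $\psi(TG)=\mathrm{Graph}(TF)$). One small point you should state symmetrically: in the direction $(ii)\Rightarrow(i)$ you must first extract that $F$ is affine from the affinity of $\mathrm{Graph}(F)$ (the other half of Lemma~\ref{Lemma6}) before Theorem~\ref{Theorem 1} becomes applicable; you only phrased the implication $F$ affine $\Rightarrow$ $G$ affine.
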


To prove this Proposition we need the following lemma which is a generalization of the following fact: A map $F:\mathbb{R}^{n}\rightarrow\mathbb{R}^{m}$ is affine if and only if its graph is an affine subspace of $\mathbb{R}^{n+m}$. Now let $(M^{1},\nabla^{1})$ and $(M^{2},\nabla^{2})$ be two affine manifolds and $F:(M^{1},\nabla^{1})\rightarrow(M^{2},\nabla^{2})$ is a smooth map.
\begin{lemma}\label{Lemma6}
	$F$ is an affine map if and only if its graph is an affine submanifold of $(M^{1}\times M^{2},\nabla^{1}\oplus\nabla^{2})$.
\end{lemma}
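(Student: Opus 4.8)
The plan is to reduce both sides of the equivalence to statements about parallel transport and then match them componentwise. Recall the two characterizations already available in the excerpt: a smooth map is affine exactly when it carries parallel vector fields along curves to parallel vector fields along the image curves (equivalently, when it is totally geodesic), and an affine submanifold is one whose tangent spaces are preserved by the ambient parallel transport along curves lying in it (this is precisely the criterion used in the proof of Corollary \ref{Corollary3}). The key structural fact about the product connection $\nabla^1\oplus\nabla^2$ is that its parallel transport along a curve $\gamma=(\gamma_1,\gamma_2)$ splits as $\tau^{\gamma_1}_{0t}\oplus\tau^{\gamma_2}_{0t}$; equivalently, a field $(V,W)$ is parallel along $\gamma$ iff $V$ is parallel along $\gamma_1$ and $W$ along $\gamma_2$. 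I would also record at the outset that $\mathrm{Graph}(F)$ is always an embedded submanifold, diffeomorphic to $M^1$ via the first projection, with tangent space $T_{(x,F(x))}\mathrm{Graph}(F)=\{(v,T_xF(v))\mid v\in T_xM^1\}$.

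For the forward implication, assume $F$ is affine. I would take an arbitrary curve $\gamma$ in $\mathrm{Graph}(F)$, which necessarily has the form $\gamma=(\gamma_1,F\circ\gamma_1)$. Given a tangent vector $(u,T_xF(u))$ at $\gamma(0)=(x,F(x))$, let $V(t)=\tau^{\gamma_1}_{0t}(u)$ be the parallel field along $\gamma_1$ with $V(0)=u$. Affinity of $F$ yields that $t\mapsto T_{\gamma_1(t)}F(V(t))$ is parallel along $F\circ\gamma_1$, hence equals $\tau^{F\circ\gamma_1}_{0t}(T_xF(u))$. Using the splitting of the product parallel transport, $\tau^{\gamma}_{0t}(u,T_xF(u))=(V(t),T_{\gamma_1(t)}F(V(t)))$, which lies in $\{(v,T_{\gamma_1(t)}F(v))\}=T_{\gamma(t)}\mathrm{Graph}(F)$. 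Thus parallel transport preserves the tangent bundle of the graph, so $\mathrm{Graph}(F)$ is an affine submanifold.

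For the converse, assume $\mathrm{Graph}(F)$ is an affine submanifold. I would start from an arbitrary curve $\gamma_1$ in $M^1$ and a parallel field $V$ along it, and set $\gamma=(\gamma_1,F\circ\gamma_1)$, a curve in the graph. Transporting the tangent vector $(V(0),T_xF(V(0)))$ along $\gamma$ gives $(V(t),\tau^{F\circ\gamma_1}_{0t}(T_xF(V(0))))$ by the splitting, and by hypothesis this stays in $T_{\gamma(t)}\mathrm{Graph}(F)=\{(v,T_{\gamma_1(t)}F(v))\}$. Matching the second component against the first forces $\tau^{F\circ\gamma_1}_{0t}(T_xF(V(0)))=T_{\gamma_1(t)}F(V(t))$, i.e. the image field $T_{\gamma_1(t)}F(V(t))$ is the parallel transport of a fixed vector and is therefore parallel along $F\circ\gamma_1$. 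Since $\gamma_1$ and $V$ were arbitrary, $F$ is affine.

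The step I expect to require the most care is not either computation but the bookkeeping around the definition of affine submanifold: I must confirm that the parallel-transport-invariance (totally geodesic) criterion I am using is the correct working definition, and that the induced connection $\nabla^N$ on $\mathrm{Graph}(F)$ supplied by that definition is the one transported from $\nabla^1$ through the projection diffeomorphism, with smoothness automatic since $\mathrm{Graph}(F)\cong M^1$. Once the parallel-transport characterizations of both \emph{affine map} and \emph{affine submanifold} are pinned down and the product transport is known to split, the equivalence is just a componentwise comparison of the two defining conditions.
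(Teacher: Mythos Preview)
Your proposal is correct and follows essentially the same approach as the paper: both arguments use that parallel transport along a curve $(\gamma,F\circ\gamma)$ in the product splits as $\tau^{\gamma}\oplus\tau^{F\circ\gamma}$, then compare the condition $T_qF(\tau^{\gamma}(u))=\tau^{F\circ\gamma}(T_pF(u))$ (affinity of $F$) with the condition $\tau^{\tilde\gamma}(u,T_pF(u))\in T_{(q,F(q))}\mathrm{Graph}(F)$ (affinity of the graph). The paper compresses both directions into a single biconditional, whereas you spell them out separately, but the content is the same.
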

\begin{proof}
	Let $\tilde{\gamma}:[0,1]\rightarrow \mathrm{Graph}(F)$, $\tilde{\gamma}=(\gamma,F(\gamma))$ where $\gamma$ is a curve on $M^{1}$ such that $\gamma(0)=p$ and $\gamma(1)=q$ hence for any $u\in T_{p}M^{1}$ we have 
	\begin{equation*}
	\tau^{\tilde{\gamma}}(u,T_{p}F(u))=(\tau^{\gamma}(u),\tau^{F(\gamma)}(T_{p}F(u)))
	\end{equation*} 
	where $\tau^{\tilde{\gamma}}$ is the parallel transport along the curve $\tilde{\gamma}$ seen as a curve on $M^{1}\times M^{2}$. Hence $T_{q}F(\tau^{\gamma}(u))=\tau^{F(\gamma)}(T_{p}F(u))$ if and only if $\tau^{\tilde{\gamma}}(u,T_{p}F(u))\in T_{(q,F(q))}\mathrm{Graph}(F)$. This show that $F$ is an affine map if and only if commute with parallel transport if and only if $\mathrm{Graph}(F)$ is an affine submanifold of $(M^{1}\times M^{2},\nabla^{1}\oplus\nabla^{2})$. 
\end{proof}
\begin{proof}[Proof of Proposition \ref{Proposition8}]
	Follow directly from Lemma \ref{Lemma6}.
\end{proof}

Now let's explore more proprieties of coisotropic K-V subamanifold. For that we endowed the space of section of the vector bundle $TN^{\circ}$ with the product $\bullet$ given by:
\begin{equation*}
\alpha\bullet\be=\displaystyle\mathcal{D}_{\al}\be.
\end{equation*}
\begin{proposition}
	Let $N\subset (M,\na,h)$ be a coisotropic submanifold. Then $(TN^{\circ},N,\bullet,\rho)$ is a left symmetric algebroid, where $\rho$ is the restriction of $h_{\#}$ to the subbundle $TN^{\circ}\subset T_{N}^{*}M$. Moreover fro any $x\in N$ the vector space $\mathfrak{g}_{x}=\ker \rho_{x}$ is a commutative associative algebra.
\end{proposition}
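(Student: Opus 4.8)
The plan is to obtain $(TN^{\circ},\bullet,\rho)$ by restricting to the conormal bundle a left-symmetric algebroid structure carried by the full cotangent bundle. First I would note that on all of $M$ the triple $(T^{*}M,\mathcal{D},h_{\#})$ is already a left-symmetric algebroid: $\mathcal{D}$ is $C^{\infty}(M)$-linear in its first slot and a derivation along $\al^{\#}$ in its second slot, and it is torsion-free because $\mathcal{D}_{\al}\be-\mathcal{D}_{\be}\al=[\al,\be]_{h}$ by (\ref{eq 2}). Since $\mathrm{ass}(\al,\be,\ga)-\mathrm{ass}(\be,\al,\ga)=-\left(\mathcal{D}_{\al}\mathcal{D}_{\be}\ga-\mathcal{D}_{\be}\mathcal{D}_{\al}\ga-\mathcal{D}_{[\al,\be]_{h}}\ga\right)$, left-symmetry of $\bullet=\mathcal{D}$ is exactly the flatness of $\mathcal{D}$ as a connection for the Lie algebroid $(T^{*}M,h_{\#},[\;,\;]_{h})$. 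A short computation in affine coordinates, where $\mathcal{D}_{dx_{i}}dx_{j}=\sum_{m}\partial_{m}h_{ij}\,dx_{m}$ and $[dx_{i},dx_{j}]_{h}=0$, reduces this curvature to a second-order expression in the $h_{ij}$ that vanishes once one differentiates the Codazzi identity (\ref{eq2}); hence $\mathcal{D}$ is flat and $(T^{*}M,\mathcal{D},h_{\#})$ is a left-symmetric algebroid.

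The main step, and the only place where both hypotheses are genuinely used, is to verify that $TN^{\circ}$ is invariant. The anchor $\rho=h_{\#}|_{TN^{\circ}}$ takes values in $TN$ by coisotropy (Definition \ref{Definition3}). To see that $\bullet$ preserves $\Ga(TN^{\circ})$, I would test $\mathcal{D}_{\al}\be$ against an arbitrary $X\in\Ga(TN)$ using the second expression in (\ref{eq 1}),
\[
\prec\mathcal{D}_{\al}\be,X\succ=-X.h(\al,\be)+\al^{\#}.\prec\be,X\succ+\prec\al,\na_{X}\be^{\#}\succ+\prec\be,[X,\al^{\#}]\succ,
\]
and show that each term vanishes along $N$ for $\al,\be\in\Ga(TN^{\circ})$: coisotropy gives $\be^{\#}\in TN$, so $h(\al,\be)=\prec\al,\be^{\#}\succ$ vanishes on $N$ and hence has zero derivative along $X\in TN$ and along $\al^{\#}\in TN$ (first two terms); the affineness of $(N,\na^{N})\subset(M,\na)$ gives $\na_{X}\be^{\#}\in TN$ (third term); and $[X,\al^{\#}]\in TN$ because $X$ and $\al^{\#}$ are both tangent to $N$ (fourth term). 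Since $\mathcal{D}_{\al}\be$ is tensorial in $\al$ and $\al^{\#}\in TN$, its restriction to $N$ depends only on $\al|_{N}$ and $\be|_{N}$, so $\bullet$ is well defined on $\Ga(TN^{\circ})$. I expect this verification to be the main obstacle, since it rests on the delicate cancellation of all four boundary terms.

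Once invariance is in place, the axioms for $(TN^{\circ},\bullet,\rho)$ follow by restriction from $(T^{*}M,\mathcal{D},h_{\#})$: the two Leibniz rules are those of the contravariant connection $\mathcal{D}$ (with $\rho(\al)=\al^{\#}\in TN$ making $\rho(\al)(f)$ meaningful for $f\in C^{\infty}(N)$), and left-symmetry holds because the associator of the restricted product is the restriction of the ambient associator, which is symmetric in its first two arguments by the flatness proved above. This establishes the first assertion.

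For the last assertion, fix $x\in N$ and $\al,\be\in\ker\rho_{x}$, so that $\al^{\#}_{x}=\be^{\#}_{x}=0$. The product $\bullet$ is tensorial in $\al$, and, because $\rho(\al)_{x}=0$, also in $\be$ at $x$; thus it descends to a bilinear product on $\ker\rho_{x}$. It takes values there: by (\ref{eq 3}), $(\mathcal{D}_{\al}\be)^{\#}|_{x}=\na_{\al^{\#}}\be^{\#}|_{x}=0$, while $\mathcal{D}_{\al}\be\in TN^{\circ}$, so $\al\bullet\be|_{x}\in T_{x}N^{\circ}\cap\ker h_{\#}=\ker\rho_{x}$. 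It is commutative because $\al\bullet\be-\be\bullet\al=[\al,\be]_{h}=\na_{\al^{\#}}\be-\na_{\be^{\#}}\al$ vanishes at $x$ by (\ref{eq 0}) and (\ref{eq 2}). Finally, a product that is both left-symmetric and commutative is associative: left-symmetry together with $\al\bullet\be=\be\bullet\al$ yields $\al\bullet(\be\bullet\ga)=\be\bullet(\al\bullet\ga)$, and applying this identity once more together with commutativity gives $(\al\bullet\be)\bullet\ga=\al\bullet(\be\bullet\ga)$. Hence $\mathfrak{g}_{x}=\ker\rho_{x}$ is a commutative associative algebra.
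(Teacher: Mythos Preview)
Your proof is correct and follows the same core strategy as the paper: the key step in both is to show that $\Gamma(TN^{\circ})$ is closed under $\mathcal{D}$ by pairing $\mathcal{D}_{\al}\be$ with an arbitrary $X$ tangent to $N$ and using coisotropy together with the affineness of $N$ to kill each term (the paper expands $[X,\al^{\#}]=\na_{X}\al^{\#}-\na_{\al^{\#}}X$, but the computation is otherwise identical). The paper then declares that ``the other assertions are a direct consequence of this fact,'' whereas you supply the details: you verify explicitly that $(T^{*}M,\mathcal{D},h_{\#})$ is a left-symmetric algebroid by checking flatness of $\mathcal{D}$ in affine coordinates via the Codazzi identity, and you spell out why $\ker\rho_{x}$ is commutative (from $[\al,\be]_{h}=\na_{\al^{\#}}\be-\na_{\be^{\#}}\al$) and hence associative (left-symmetric $+$ commutative). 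These additions are not a different route, just a more complete version of the same argument.
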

\begin{proof}
	What we need to show is that the product $\bullet$ is well defined the other assertions is a direct consequence of this fact. Let $X$ be a vector field on $M$ which is tangent to $N$ and $\al,\be\in\Gamma(TN^{\circ})$ then we have
	\begin{eqnarray*}
		\prec\mathcal{D}_{\al}\be,X\succ&=&-X.(\prec\al,\be^{\#}\succ)+\prec\al,\na_{X}\be^{\#}\succ+\prec\be,\na_{X}\al^{\#}\succ \\
		& &+h_{\#}(\al).(\prec\be,X\succ)-\prec\be,\na_{\al^{\#}}X\succ
	\end{eqnarray*}
	using the affinity of $N$ together with the condition $h_{\#}(TN^{\circ})\subset TN$ we get that $\prec\mathcal{D}_{\al}\be,X\succ=0$ hence $\mathcal{D}_{\al}\be\in\Gamma(TN^{\circ})$.
\end{proof}
\begin{example}$\ $
	\begin{enumerate}
		\item $N$ is a K-V submanifold if and only if $\rho=0$.
		\item If $x \in M$ is a point at which the K-V structure is zero, then $\{x\}$ is coisotropic.
		\item Let $\iota:\mathcal{H}\hookrightarrow\mathcal{A}$ be a subalgebra of the associative commutative algebra $\mathcal{A}$. Then $(\iota^{*})^{-1}(\{0\})\subset\mathcal{A}^{*}$ is a cositropic K-V submanifold where $\mathcal{A}^{*}$ is endowed with its canonical linear K-V structure.
	\end{enumerate}
\end{example}

Recall that two submanifolds $N^{1},N^{2}\subset M$ are said to have a clean intersection if $N^{1}\cap N^{2}$ is a submanifold of $M$ and $T(N^{1}\cap N^{2})=TN^{1}\cap TN^{2}$. A submanifold has clean intersection with a foliation if it intersects cleanly every leaf of the foliation.
\begin{proposition}
	Let $N$ be an affine submanifold of a K-V manifold $(M,\na,h)$ which has
	clean intersection with its affine foliation $\mathcal{F}$. Then $N$ is a coisotropic submanifold of $(M,\na,h)$ if and only if for each affine leaf $L\in\mathcal{F}$ the intersection $L\cap N$ is
	a coisotropic submanifold of $N$.
\end{proposition}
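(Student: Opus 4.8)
The plan is to reduce the equivalence to a pointwise statement in linear algebra at each $x\in N$, using the factorization of $h_{\#}$ through the pseudo-Hessian metric of the leaf through $x$. Fix $x\in N$ and let $L\in\mathcal{F}$ be the affine leaf containing $x$; set $W:=T_xL=\mathrm{Im}(h_{\#}(x))$ and $V:=T_xN$, both viewed inside $T_xM$, so that the clean-intersection hypothesis gives $T_x(L\cap N)=V\cap W$. First I would note that $L\cap N$ really is an affine submanifold of $L$: since $N$ and $L$ are affine submanifolds of $(M,\na)$ and the intersection is clean, parallel transport along any curve in $L\cap N$ preserves $TN$ and $TL$, hence $TN\cap TL$, exactly as in the proof of Corollary \ref{Corollary3}. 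Recalling from Section $1$ that $L$ carries the pseudo-Hessian metric $g_L$ with $g_L(\al^{\#},\be^{\#})=h(\al,\be)$ and $h^L=g_L^{-1}$, the defining identity $\langle\be,h_{\#}\al\rangle=g_L(h_{\#}\al,h_{\#}\be)$ (together with $\mathrm{Im}(h_{\#})=W$) yields the factorization
\begin{equation*}
h_{\#}(x)=\iota\circ h^L_{\#}(x)\circ\iota^{*},
\end{equation*}
where $\iota\colon W\hookrightarrow T_xM$ is the inclusion, $\iota^{*}\colon T_x^{*}M\to W^{*}$ its dual (restriction of covectors to $W$), and $h^L_{\#}(x)=g_L(x)^{-1}\colon W^{*}\to W$ is an isomorphism.

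The core is then a short annihilator computation. I would establish the identity $\iota^{*}(V^{\circ})=(V\cap W)^{\circ}$, the annihilator on the right being taken inside $W^{*}$: the inclusion $\subseteq$ is immediate, and for $\supseteq$ any $\lambda\in(V\cap W)^{\circ}$ extends to a covector on $T_xM$ that vanishes on $V$ and restricts to $\lambda$ on $W$, because $\lambda$ and the zero functional on $V$ agree on $V\cap W$. Feeding this into the factorization gives
\begin{equation*}
h_{\#}(T_xN^{\circ})=\iota\bigl(h^L_{\#}\bigl((V\cap W)^{\circ}\bigr)\bigr)=\iota\bigl(h^L_{\#}\bigl(T_x(L\cap N)^{\circ}\bigr)\bigr),
\end{equation*}
the last equality using clean intersection. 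Because $\mathrm{Im}(h_{\#}(x))\subseteq W$, the membership $h_{\#}(T_xN^{\circ})\subseteq V$ is equivalent to $h_{\#}(T_xN^{\circ})\subseteq V\cap W$, and through the displayed equality this is precisely $h^L_{\#}\bigl(T_x(L\cap N)^{\circ}\bigr)\subseteq T_x(L\cap N)$.

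Combining the two, $N$ is coisotropic at $x$ in the sense of Definition \ref{Definition3} if and only if $L\cap N$ is coisotropic at $x$ inside the leaf $(L,\na^L,h^L)$; letting $x$ range over $N$, with every point lying on a unique affine leaf, produces the stated equivalence. The hard part will not be the final logical step but setting up the factorization $h_{\#}=\iota\circ h^L_{\#}\circ\iota^{*}$ rigorously and verifying the annihilator identity $\iota^{*}(V^{\circ})=(V\cap W)^{\circ}$ carefully under the clean-intersection assumption; once these are in place, the observation $\mathrm{Im}(h_{\#})\subseteq W$ is what upgrades a single inclusion into the desired two-sided equivalence with essentially no further work.
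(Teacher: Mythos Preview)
Your proof is correct and takes a genuinely different route from the paper. The paper proves the forward direction by invoking Proposition~\ref{Proposition7} (pullbacks of coisotropic submanifolds under K-V maps) applied to the inclusion $\iota\colon L\hookrightarrow M$; as written, this step requires the leaves to be \emph{transverse} to $N$ (the paper's proof in fact opens with ``Assume that $N\subset M$ is\ldots transverse to the affine foliation''), which is stronger than the clean-intersection hypothesis in the statement. For the backward direction the paper argues via the annihilator identity $(TL\cap TN)^{\circ}=TL^{\circ}+TN^{\circ}$ in $T^{*}M$ together with the factorization of $h_{\#}$ through $h^{L}_{\#}$. Your approach, by contrast, is a single pointwise linear-algebra computation using the factorization $h_{\#}=\iota\circ h^{L}_{\#}\circ\iota^{*}$ and the restriction identity $\iota^{*}(V^{\circ})=(V\cap W)^{\circ}$; this handles both directions symmetrically and, crucially, only uses clean intersection. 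What the paper's route buys is a conceptual tie-in to the K-V map machinery and its Poisson analogue; what your route buys is that it matches the stated hypothesis and avoids the transversality gap.
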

\begin{proof}
	Assume that $N\subset M$ is an affine submanifold which is transverse to the affine
	foliation $\mathcal{F}$. This means that for each $L\in\mathcal{F}$ the inclusion $\iota:L\hookrightarrow M$ is transverse
	$N$. Now:
	\begin{itemize}
		\item[$(a)$] If $N$ is coisotropic in $M$, it follows that $\iota^{-1}(N)=L\cap N$ is coisotropic in $N$, since
		the inclusion is a K-V map.
		\item[$(b)$] If $L\subset N$ is coisotropic in $L$, then we have:
		$$h_{\#}^{L}(T(L\cap N)^{\circ})\subset T(L\cap N)=TL\cap TN ,$$
	\end{itemize}
	where the annihilator is in $T^{*}L$. It follows that for for any a $\al\in\Omega^{1}(M)$ such that $\alpha_{|_{T(L\cap N)}}=0$, we have:
	$\al^{\#}\in\Gamma(TN)$.
	But $(TL\cap TN)^{\circ}=TL^{\circ}+TN^{\circ}$, so we conclude that $h_{\#}(TN^{\circ})\subset TN$, which means that $N$ is coisotropic.
\end{proof}

\bibliographystyle{amsplain}

\end{document}